\documentclass[12pt]{article}
\usepackage[english]{babel}

\usepackage{amsmath, amsthm}
\usepackage{amscd}
\usepackage{amsfonts}
\usepackage{amssymb}
\usepackage{tikz-cd}
\usepackage{mathabx}
\usepackage{mathtools}
\usepackage{graphicx}
\usepackage{hyperref}
\usepackage{epigraph}
\usepackage{underscore}

\usepackage[left=2cm,right=2cm,
    top=2cm,bottom=2cm,bindingoffset=0cm]{geometry}

\title{Weil Conjectures Exposition}
\author{Evgeny Goncharov\\ \href{mailto:eg555@cam.ac.uk}{eg555@cam.ac.uk}\footnote{Cambridge University, Center for Mathematical Sciences, Wilberforce Road, Cambridge, UK}, \href{mailto:eagoncharov@edu.hse.ru}{eagoncharov@edu.hse.ru}\footnote{National Research University Higher School of Economics (NRU HSE), Usacheva 6, Moscow, Russia.}}
\date{}

\newtheorem{fact}{Proposition}

\newtheorem{cor}{Corollary}

\newtheorem*{defn}{Definition}

\newtheorem{lemma}{Lemma}

\newtheorem{remark}{Remark}

\newtheorem*{thrm1}{Theorem (Grothendieck)}

\newtheorem*{thrm2}{Theorem (Inverse Mapping Theorem)}

\newtheorem*{thrm3}{Theorem (Grothendieck existence theorem)}

\newtheorem*{thrm4}{Theorem (Leray spectral sequence)}

\newtheorem*{thrm5}{Theorem (Properties of $l$-adic cohomology)}

\newtheorem*{thrm6}{Theorem (Cohomological interpretation of the zeta function)}

\newtheorem*{thrm7}{Theorem (Cohomological interpretation of the $L$-functions)}

\newtheorem*{thrm8}{Theorem (Poincare duality, first form)}

\newtheorem*{thrm9}{Theorem (Poincare duality, second form)}

\newtheorem*{thrm10}{Theorem (Poincare duality, third form)}

\newtheorem*{thrm11}{Deligne's Theorem (DT)}

\newtheorem*{thrm12}{Theorem (Weak Lefschetz)}

\newtheorem*{thrm13}{Theorem (The Main Lemma)}

\newtheorem*{thrm14}{Theorem (Kajdan-Margulis)}

\newtheorem*{thrm15}{Deligne's theorem (particular case)}

\newtheorem*{thrm16}{The rationality theorem}

\newtheorem*{thrm17}{Generalization of the Hasse-Weil bound}

\newtheorem*{thrm18}{Ramunajan Conjecture}

\newtheorem*{thrm19}{Ramunajan-Peterson Conjecture}

\newtheorem*{thrm20}{Bound on the character sums in several variables}

\newtheorem*{thrm21}{The hard Lefschetz theorem}

\newtheorem*{reduction1}{Reduction 1}

\newtheorem*{reduction2}{Reduction 2}

\newtheorem*{dt}{Theorem (DR)}

\begin{document}

\maketitle

\begin{abstract}
In this paper we provide a full account of the Weil conjectures including Deligne's proof of the conjecture about the eigenvalues of the Frobenius endomorphism. 

Section 1 is an introduction into the subject. Our exposition heavily relies on the Etale Cohomology theory of Grothendieck so I included an overview in Section 2. Once one verifies (or takes for granted) the results therein, proofs of most of the Weil conjectures are straightforward as we show in Section 3. 

Sections 4-8 constitute the proof of the remaining conjecture. The exposition is mostly similar to that of Deligne in [7] though I tried to provide more details whenever necessary. Following Deligne, I included an overview of Lefschetz theory (that is crucial for the proof) in Section 6. 

Section 9 contains a (somewhat random and far from complete) account of the consequences. Numerous references are mentioned throughout the paper as well as briefly discussed in Subsection 1.4. 

\end{abstract}

\tableofcontents

\section{Preliminaries}

In this section we formulate the Weil conjectures, state a few examples and outline how the results have evolved over the time. We finish by discussing the references.

\subsection{Statement of the Weil conjectures} 

Let $X_0$ be a nonsingular (= smooth) projective variety over $\mathbb{F}_q$ (a field with $q=p^a$, $p$-prime, $a \in \mathbb{Z}_{>0}$ elements)  of dimension $d$. We let $X$ be the variety obtained from $X_0$ by extension of scalars of $\mathbb{F}_q$ to $\mathbb{\bar F}_q$ and $X_0(\mathbb{F}_{q^n})$ be the set of points of $X_0$ with coordinates in  $\mathbb{F}_{q^n}$. For a set $A$ we denote by $\# A$ the number of elements in $A$ (so $\# X_0(\mathbb{F}_{q^n})$ is the number of points of $X_0$ with coordinates in $\mathbb{F}_{q^n}$). 

\begin{defn}
We define the zeta function of $X_0$ to be $$Z(X_0, t)=\exp \left( \sum_{n \geq 1} \# X_0(\mathbb{F}_{q^n}) \frac{t^n}{n} \right).$$
\end{defn}
Thus, $Z(X_0, t) \in \mathbb{Q}[[t]]$ is a formal power series with rational coefficients. Note that we have $$t \frac{d}{dt}\log Z(X_0, t)= \sum_{n \geq 1} \# X_0(\mathbb{F}_{q^n}) t^n$$ so $t \frac{d}{dt}\log Z(X_0, t)$ is the generating function for the sequence $\{ \# X_0(\mathbb{F}_{q^n}) \}_{n \geq 1}$. 

We now state the \textbf{Weil conjectures} (the original statements can be found in Weil's paper [1] published in 1949):

\textbf{(I) Rationality}: $Z(X_0, t)$ is a \textit{rational} function of t. Moreover, we have $$Z(X_0, t)=\frac{P_1(X_0, t)P_3(X_0, t) \cdots P_{2d-1}(t)}{P_0(X_0, t) P_2(X_0, t) \cdots P_{2d}(X_0, t)}$$ where $P_0(X_0, t)=1-t$, $P_{2d}(X_0, t)=1-q^d t$ and each $P_i(X_0, t)$ is an \textit{integral} polynomial.

\textbf{(II) Functional equation} $Z(X_0, t)$ satisfies the \textit{functional equation} $$Z(X_0, q^{-d} t^{-1})=\pm q^{d \chi/2}t^{\chi}Z(X_0, t),$$ where $\chi=\sum_i (-1)^i \beta_i$ for $\beta_i=\deg P_i(t)$. 

\textbf{(III) Betti numbers} If $X$ lifts to a variety $X_1$ in characteristic $0$, then $\beta_i$ are the (real) \textit{Betti numbers} of $X_1$ considered as a variety over $\mathbb{C}$.

\textbf{(IV) Riemann hypothesis} For $1 \leq i \leq 2d-1$,  $P_i(t)=\prod_{j=1}^{\beta_i} (1-\alpha_{i,j}t)$, where $\alpha_{i,j}$ are algebraic integers of absolute value $q^{i/2}$. 

Although Weil does not explicitly say so, it is clear that the conjectures are partially suggested by topological statements (in particular, (I) is suggested by the \textit{Lefschetz fixed-point formula} and (II) is suggested by \textit{Poincare duality}). Note that $\chi$ in (II) is the \textit{Euler characteristic} of $X$ (which can be defined as the \textit{self-intersection number} of the diagonal with itself in $X \times X$) and (III) suggests the existence of a cohomological theory such that $\beta_i$ are the Betti numbers (dimensions of the cohomology groups) of $X$ in that theory. 

If such a theory also had an analog of the Lefschetz fixed-point formula and of Poincare duality (together with a theorem relating the Betti numbers of $X$ and $X_1$), then proofs of most of the conjectures (except for the Riemann hypothesis) would be straightforward. A search for such a theory (a "\textit{Weil cohomology}" theory) eventually resulted in the invention of \textit{Etale Cohomology} (and \textit{$l$-adic cohomology}) by Artin, Verdier and Grothendieck. In terms of this $l$-adic cohomology, the "moral" of (I)-(IV) is that topological properties of $X_0$ determine the diophantine shape of its zeta function. 

\subsection{A different definition of the zeta function and the Frobenius endomorphism}

To explain the connection between (IV) above and the original Riemann hypothesis (for the \textit{Riemann zeta function}) we need to make an alternative definition of the zeta function (it is also the definition used by Deligne in [7]):

\begin{defn}
Let $Y$ be a scheme of finite type over $Spec (\mathbb{Z})$, $|Y|$ the set of closed points of $Y$ and for $y \in |Y|$ denote by $N(y)$ the number of elements in the residue field $k(y)$ of $Y$ at $y$. The Hasse-Weil zeta function of $Y$ is $$\zeta_Y(s)=\prod_{y \in |Y|}(1-N(y)^{-s})^{-1}$$ (this product absolutely converges for $Re(s) > \dim Y$). 
\end{defn}

Note that for $Y=Spec(\mathbb{Z})$, $\zeta_Y(s)$ is the Riemann zeta function (we will elaborate more on this in the last few paragraphs of this paper). 

We can regard a variety $X_0$ as a \textit{scheme of finite type} over $\mathbb{Z}$ via $$X_0 \to Spec(\mathbb{F}_q) \hookrightarrow Spec(\mathbb{Z})$$ (where the second map is defined by $\mathbb{Z} \to \mathbb{Z}/p\mathbb{Z} \hookrightarrow \mathbb{F}_q$). For $x \in X_0$ let $\deg(x)=[k(x) : \mathbb{F}_q]$. We have $N(x)=q^{\deg(x)}$ so it makes sense to introduce a new variable $t=q^{-s}$and let $$Z(X_0, t)=\prod_{x \in |X_0|}(1-t^{\deg(x)})^{-1}.$$ This product converges for $|t|$ small enough and we have $$\zeta_{X_0}(s)=Z(X_0, q^{-s}).$$

Given that this definition of $Z(X_0, t)$ coincides with the one given in Subsection 1.1, the Riemann hypothesis for $X_0$ states that $\zeta_{X_0}(s)$ has its poles on the lines $Re(s)=0, 1, \cdots, \dim X_0$ and zeroes on the lines $Re(s)=\frac{1}{2}, \frac{3}{2}, \cdots, \frac{\dim X_0 - 1}{2}$, so the analogy with the original Riemann hypothesis becomes clear. 

Let me now introduce the \textit{Frobenius endomorphism}\footnote{Formally, we do not need to introduce the Frobenius endomorphism at this point and all we need here is formula $(1)$.} $F$ (that will play a central role in our exposition) and explore the connection between the fixed points of $X$ under $F^n, \ n \geq 1$ and the varieties $X_0(\mathbb{F}_{q^n})$. Define the Frobenius endomorphism as\footnote{For clarification of the definition and proof of the basic properties of $F$ see [2], Chapter 27.} $F: X \to X, \ x \to x^q$. By this we mean that on every open affine $U$ (defined by some $U_0$) $F$ acts by taking the $q$-th power of all the coordinates (there exists a unique such $F$). We have $F^{-1}(U)=U$ and $\deg F=q^{\dim X}$. Identify the set $|X|$ of closed points of $X$ with $X_0( \mathbb{\bar F}_q)$ (those are the points $Hom_{\mathbb{F}_q} (Spec(\mathbb{\bar F}_q), X_0)$ of $X_0$ with coefficients in $\mathbb{\bar F}_q$). Let $\varphi \in Gal(\mathbb{\bar F}_q/\mathbb{F}_q)$ be the \textit{substitution of Frobenius}. The action of $F$ on $|X|$ identifies with the action of $\varphi$ on $X_0(\mathbb{\bar F}_q)$. Under this equivalence:

\textbf{(a)} The set $X^F$ of closed points of $X$ fixed under $F$ is identified with the set $X_0(\mathbb{F}_q) \subset X_0(\mathbb{\bar F}_q)$ of points of $X$ defined over $\mathbb{F}_q$ (just because for $x \in \mathbb{\bar F}_q$ one has $x \in \mathbb{F}_q \Leftrightarrow x^q=x$).

\textbf{(b)} Similarly, the set $X^{F^n}$ of closed points of $X$ fixed under the $n$-th iteration of $F$ is identified with $X_0(\mathbb{F}_{q^n})$.

\textbf{(c)} The set $|X|$ of closed points of $X$ is identified with the set $|X|_F$ of the orbits of $F$ (or $\varphi$) on $|X|$. The degree $\deg(x)$ of $x \in |X_0|$ is the number of elements in the corresponding orbit.

\textbf{(d)} From (b) and (c) we see that $$\#X^{F^n}=\#X_0(\mathbb{F}_{q^n})=\sum_{\deg(x)|n} \deg(x) \ \ \textbf{(1)}$$ (for $x \in |X_0|$ and $\deg(x)|n$, $x$ defines $\deg(x)$ points with coordinates in $\mathbb{F}_{q^n}$ all conjugate over $\mathbb{F}_q$). We will occasionally use these results.

Once we have $(1)$, we can write $$t \frac{d}{dt}\log Z(X_0, t)=\frac{t\frac{d}{dt}Z(X_0, t)}{Z(X_0, t)}=\sum_{x \in |X_0|} -\frac{-\deg(x)t^{\deg(x)}}{1-t^{\deg(x)}}=$$ $$=\sum_{x \in |X_0|} \sum_{n >0} \deg(x)t^{n\deg(x)} \overset{(1)} = \sum_n X_0(\mathbb{F}_{q^n})t^n$$ so the equivalence of two definitions for the zeta function follows by taking exponents.

\subsection{Examples and evolution of the results}

It is easy to verify the Weil conjectures by hand for $X_0=\mathbb{P}_{\mathbb{F}_q}^d$. Indeed, since in this case we have $X_0(\mathbb{F}_{q^n})=\sum_{i=0}^d q^{in}$, the zeta function is $\prod_{i=0}^n (1-q^{i}t)^{-1}$ ($\chi=d+1$ and the complex projective space gives the relevant Betti numbers $\beta_i$). In general, it is easy to check the Weil conjectures for varieties that are disjoint unions of a finite number of affine spaces, since their zeta functions are easy to find. These include Grassmannians and flag varieties.  

Of course, curves (the original motivation of Weil) are a perfect example to illustrate the Weil conjectures. I highly recommend the lectures\footnote{I would also like to thank Richard Griffon for inspiring my interest in the Weil Conjectures.} [3] by Richard Griffon (available online) for a great account of the matter. 

Suppose, therefore, that $X_0$ is a smooth projective curve of genus $g$ over $\mathbb{F}_q$. One can use the Riemann-Roch theorem to show that $Z(X_0, t)$ is a rational function of $t=q^{-s}$ that has the form $$Z(X_0, t)=\frac{P(t)}{(1-t)(1-qt)},$$ where $P(t)=\prod_{i=1}^{2g} (1-\alpha_i t)$ is a polynomial of degree $2g$ with integer coefficients and its roots are permuted by $\alpha_i \to q/\alpha_i$ (this implies the functional equation by a simple calculation - see section 3 (II) for a generalization of this argument to arbitrary dimension). The original proof of the above statements is a result due to Schmidt [4]. 

The Riemann Hypothesis for curves states that the zeroes of $\zeta_{X_0}(s)$ lie on the line $Re(s)=\frac{1}{2}$, or, equivalently, that $|\alpha_i|=\sqrt{q}$. Taking the logarithms of both sides of the equality $$\exp \left( \sum_{n \geq 1} \# X_0(\mathbb{F}_{q^n}) \frac{t^n}{n} \right)=\frac{\prod_{i=1}^{2g} (1-\alpha_i t)}{(1-t)(1-qt)}$$ we obtain $$\# X_0(\mathbb{F}_{q^n})=1+q^n-\sum_{i=1}^{2g} \alpha_i^n \ \ \textbf{(2)}.$$ Then (as pointed by Hasse in [5]) the Riemann Hypothesis becomes \textbf{equivalent} to the following diophantine statement $$|\# X_0(\mathbb{F}_{q^n})-1-q^n| \leq 2g \sqrt{q}^n \ \textbf{(Hasse-Weil bound)}.$$

Hasse went on to provide two independent proofs for the case of elliptic curves $(g=1)$ in 1933 and 1934 based on lifting a curve to characteristic 0 and studying the endomorphism ring of an elliptic curve respectively.

Then Weil himself proved the general case in two different ways, one of which he then generalized to Abelian varieties (see the extended overview of the methods of Hasse and Weil along with references to the original papers in [6]). 

It was not much before Deligne's proof of the Riemann Hypothesis for arbitrary varieties (1973) [7] that an elementary proof for the case of curves was found by Stepanov [8] and further simplified by Bombieri [9]. Stepanov introduced an elementary method to upper-bound the quantity $|\# X_0(\mathbb{F}_{q^n})|$ (a version of the now-called polynomial method) and Bombieri applied a Galois-theoretic trick to establish a matching lower bound, that, after using (2) and the "tensor-power trick" implied the Hasse-Weil Bound. 

Before that came a completely unexpected (non-homological) Dwork's $p$-adic proof of the rationality conjecture [10]. 

Though Weil himself, apparently, did not initiate the search for a "good" cohomology theory while stating his conjectures, that search attributed for much of the development of algebraic geometry in the following few decades (again, see [6] for more details). The theory (or theories for every prime $l \neq p$), that was finally established by M.Artin, J.L.Verdier and A.Grothendieck in the early 1960s (and appeared in [11] a few years later) is based on the notion of an \textit{etale covering space} (we will review it in the next section) and they went on to prove most of the conjectures (as we shall do in Section 3).

Serre, following a suggestion of Weil, has formulated and proved the analog of the Weil conjectures for Kahler manifolds making essential use of the Hodge index theorem. Inspired by this result, Grothendieck formulated a number of very strong \textit{existence} and \textit{positivity} conjectures about \textit{algebraic cycles} (published at [12] in 1969), which implied the Riemann hypothesis. 

Despite the common belief that the proof of the Riemann Hypothesis would require some of the standard conjectures, Deligne managed to avoid them altogether in his proof (and even managed to deduce one of them - the Hard Lefschetz theorem that we shall state in the last section). He later said (in the interview on the occasion of his 2013 Abel Prize) that he was just lucky to learn the theory of automorphic forms and stumble upon the work of Rankin [13], which helped him to deduce the \textit{Main Lemma} and handle the case of \textit{hypersurfaces of odd dimension} (see Section 5 and Subsection 6.3). 

Deligne went on to state and prove the generalization of the Weil conjectures in [14]. Much of the proof is the rearrangement of the first proof and the main extra ingredient is an analog of the classical argument of Hadamard and de la Vallee-Poussin used to show that various  L-series do not have zeroes with real part equal to 1 (also see Subsection 9.2). 

Laumon has found another proof using Deligne's \textit{$l$-adic Fourier transform} [15] in 1987 that was later further simplified by Katz [16] in 2001 by using monodromy results in the spirit of the first proof of Deligne. Finally, Kedlaya gave yet another proof using the Fourier transform but replacing $l$-adic cohomology by rigid cohomology [17] in 2006. 

I would like to finish this subsection by mentioning that there were attempts to find an "elementary" proof of the Riemann hypothesis by directly counting the number of rational points of a variety over finite extensions of $\mathbb{F}_q$ (like the Bombieri-Stepanov proof for the case of curves) but, as pointed out by Katz [6], it might not be available since the Riemann hypothesis does not seem to be equivalent to any diophantine statement. Indeed, from the rationality conjecture we get $$|X_0(\mathbb{F}_{q^n})|=\sum_{0 \leq i 
\leq 2d} (-1)^i \sum_j \alpha_{i, j}^n=q^{nd}+1+\sum_{1 \leq i 
\leq 2d-1} (-1)^i \sum_j \alpha_{i, j}^n$$ and the only obvious estimate for the error term from the Riemann hypothesis is $O(q^{n(d-1/2)})$. The proof for the cases when the Riemann hypothesis is equivalent to a diophantine statement (complete intersections, simply-connected surfaces, etc.) would certainly be interesting, but I am not aware of any results of this sort.

\subsection{References}

Our basic references for the next sections are Deligne's original paper [7] (or see my English translation - [33]) and Milne's lectures [2] (contains a detailed overview of Etale Cohomology theory and a partially incomplete proof of the Weil Conjectures). Most of the results that we will need were established in SGA 4, 5 and 7. See (2.14) and (5.13) in [7] for the bibliographical notes.

For a good overview of Etale cohomology consult SGA $4\frac{1}{2}$ by Deligne (sketches most of the required proofs as well as takes great care to relate the results with their classical analogs) or [2] together with the book [18] of Milne containing more material and explaining some of the skipped proofs. 

Reading the overview of Katz [6] can be beneficial as an introduction, an overview of related results and a detailed proof of the particular case of hypersurfaces of odd dimension. 

Some further references will appear along the way (especially in Sections 5 and 9). 

\section{Overview of Etale cohomology}

In the first subsection I will (very briefly) recall the basic definitions of the Etale Cohomology theory. It is not meant and is not an introduction into the subject (one really should read SGA $4\frac{1}{2}$ or [2] for that). In particular, we will usually not give any proofs.  
In Subsections 2 and 3 we discuss some of the more advanced concepts that will be crucial for the exposition. 

In Subsection 4 we state a theorem that combines many of the important results in $l$-adic cohomology. Once discussing these major results, we move on to give \textit{cohomological interpretation} of zeta and L-functions and discuss the Poincare duality for $l$-adic cohomology in detail (in Subsections 5 and 6 respectively).

One may use the results in Etale Cohomology appearing in this section (and later on) as a "black box" and still get some benefit from the proofs of Section 3 and Sections 4-7. The intuition to keep in mind here is that Etale cohomology with coefficients in a constant (\textit{torsion}) sheaf behaves "like singular cohomology"\footnote{In fact, for quasi-coherent sheaves it coincides with coherent cohomology.}.
You are free to skip 2.1-2.4 if you are already comfortable with $l$-adic cohomology.

\subsection{Basic definitions}

I assume the knowledge of ordinary sheaf cohomology. 

First, let me say a few words to justify why a complicated cohomology theory would even be necessary. The \textit{Zariski topology} is inadequate. There are two (and, in fact, many more) important reasons for that:

\textbf{(1)} The cohomology groups are (usually) zero: 
\begin{thrm1}
If $X$ is an irreducible topological space, then $H^r(X, \mathcal{F})=0$ for all constant sheaves $\mathcal{F}$ and all $r>0$. 
\end{thrm1}

\begin{proof}
Every open subset $U$ of $X$ is connected. Hence, if $\mathcal{F}$ is a constant sheaf defined by the group $\Lambda$, then we have $\mathcal{F}(U)=\Lambda$ for all open nonempty $U$. This implies that $\mathcal{F}$ is \textit{flabby} and so $H^r(X, \mathcal{F})=0$ for $r>0$.
\end{proof}

\textbf{(2)} The inverse mapping theorem does not hold:

A $C^{\infty}$ map of differentiable manifolds $\varphi:Y \to X$ is said to be \textit{etale} at $y \in Y$ if the map on tangent spaces $d\varphi: T_x Y \to T_{\varphi(x)} X$ is an isomorphism.

\begin{thrm2}
A $C^{\infty}$ map of differentiable manifolds is a local isomorphism at every point at which it is etale. 
\end{thrm2}

Now, if we define a regular map of nonsingular varieties $\varphi:Y \to X$ to be \textit{etale} at $y \in Y$ if the map on tangent spaces $d\varphi: T_x Y \to T_{\varphi(x)} X$ is an isomorphism, it is easy to see that $\mathbb{A}_k^1 \to \mathbb{A}_k^1, \ x \to x^n$, where $n$ does not divide the characteristic of $k$, is etale on the complement of the origin but is not a local isomorphism at any point $x \neq 0$ for $n > 1$ (since it would imply that the map of function fields $k(t) \to k(t), \ t \to t^n$ is an isomorphism). 

Perhaps Weil himself realized that there cannot be a "good" cohomology theory over $\mathbb{Q}$ because of the existence of \textit{supersingular elliptic curves} over finite fields. We sketch the argument as follows.

Let $p$ be the characteristic of the field $\mathbb{F}_q$ and $C$ be a supersingular elliptic curve over $\mathbb{F}_q$. The endomorphism ring of $C$ is an order in a \textit{quaternion algebra} over $\mathbb{Q}$ that should act on the first cohomology group - a 2-dimensional vector space over $\mathbb{Q}$. However, the quaternion algebra does not have such an action. Similarly, one eliminates the possibility that the coefficient ring is $\mathbb{R}$ or $\mathbb{Q}_p$ since the quaternion algebra is still a \textit{division algebra} over these fields. However, for a prime $l \neq p$ the division algebra splits over $\mathbb{Q}_l$ and becomes a 2-dimensional matrix algebra that can act on a 2-dimensional vector space. 

With these considerations in mind, we introduce the Etale Cohomology theory as follows (see [2] for all the details):

First, we say that a morphism $\varphi: Y \to X$ of schemes is \textit{etale} if it is \textit{flat} and \textit{unramified} (in particular, \textit{of finite type}). For an explanation and the proof that for the case of nonsingular varieties the two definitions coincide (together with a separate definition for arbitrary varieties using the notion fo \textit{tangent cones}) consult [2], Chapter 2. 

One does not need to have a topological space to build up a sheaf theory (and a cohomology theory for sheaves). Indeed, let \textbf{C} be a category with, for each object $U$ of \textbf{C} a distinguished set of families of maps $\{U_i \to U \}_{i \in I}$, called the \textit{coverings} of $U$, that satisfy:

\textbf{(a)} For a covering $\{U_i \to U \}_{i \in I}$ of $U$ and any morphism $V \to U$ in \textbf{C}, the fiber products $U_i \times_U V$ exist and $\{U_i \times_U V \to V \}_{i \in I}$ is a covering of $V$. 

\textbf{(b)} If $\{U_i \to U \}_{i \in I}$ is a covering of $U$ and for each $ i \in I$ $\{V_{ij} \to U_i \}_{j \in J_i}$ is a covering of $U_i$, then $\{V_{ij} \to U \}_{i, j}$ is a covering of $U$. 

\textbf{(c)} For all $U$ in \textbf{C} the family $\{ U \overset{id} \to U \}$ consisting of a single map is a covering of $U$. 

Such a system of coverings is called a \textit{Grothendieck topology} on \textbf{C} and \textbf{C} together with this topology is called a \textit{site}. 

We define the \textit{etale site of $X$} (denoted $X_{et}$) as a category $Et/X$ with objects the etale morphisms $U \to X$ and arrows the $X$-morphisms (the obvious commutative diagrams) $\varphi: U \to V$. The \textit{coverings} are surjective families of etale morphisms $\{U_i \to U \}_{i \in I}$ in $Et/X$.
An \textit{etale neighborhood} of $x \in X$ is an etale map $U \to X$ together with a point $u \in U$ mapping to $x \in X$. These give us the notion of \textit{etale topology} on $X$.

A \textit{presheaf} for the etale topology on $X$ is a contravariant functor $\mathcal{F}: Et/X \to Ab$. It is a \textit{sheaf} if the sequence $$\mathcal{F}(U) \to \prod_{i \in I} \mathcal{F}(U_i) \rightrightarrows \prod_{(i, j) \in I \times I} \mathcal{F}(U_i \times_{U} U_j)$$ is exact for all etale coverings $\{U_i \to U \}_{i \in I}$. 

One shows that the category of sheaves of abelian groups $Sh(X_{et})$ is an \textit{abelian category} with enough \textit{injectives} ([2], Chapter 8). The functor $$Sh(X_{et}) \to Ab, \ \mathcal{F} \to \Gamma(X, \mathcal{F})$$ is left exact and we can define $H^r(X_{et}, - )$ as its $r$-th \textit{right derived functor}. One then has the usual properties (that follow from the theory of derived functors):

\textbf{(a)} For any sheaf $\mathcal{F}$, $H^0(X_{et}, \mathcal{F})=\Gamma(X, \mathcal{F})$.

\textbf{(b)} $H^r(X_{et}, \mathcal{I})=0$ for $r>0$ if $\mathcal{I}$ is injective. 

\textbf{(c)} One can functorially associate to a \textit{short exact sequence} of sheaves $$0 \to \mathcal{F}' \to \mathcal{F} \to \mathcal{F}'' \to 0$$ a \textit{long exact sequence} in cohomology $$0 \to H^0(X_{et}, \mathcal{F}') \to H^0(X_{et}, \mathcal{F}) \to H^0(X_{et}, \mathcal{F}'') \to H^1(X_{et}, \mathcal{F}') \to \cdots .$$

Note that these properties characterize the functors $H^r(X_{et}, - )$ uniquely (up to a unique isomorphism). One may go on to prove the analogs of the classical axioms of cohomology (dimension, exactness, excision, homotopy - see [2], Chapter 9).  

As in the classical case, calculations are usually performed using the \textit{Cech cohomology groups} (see [2], Chapter 10). They coincide with the Etale cohomology groups for all $X$ such that $X$ is quasi-compact and every finite subset of $X$ is contained in an open affine (all quasi-projective varieties, per se). 

\subsection{Cohomology groups with compact support and $l$-adic cohomology}

For a morphism $\pi:Y \to X$ of schemes one can define the functor $\pi_*:Sh(Y_{et}) \to Sh(X_{et})$ by setting $$\pi_* \mathcal{F}(U) =\mathcal{F}(U \times_X Y)$$ for every sheaf $\mathcal{F}$ on $Y_{et}$ (it makes sense since $U \times_X Y \to Y$ is also etale). This functor is called the \textit{direct image functor} and it admits a left adjoint $\pi^*$ (\textit{the inverse image functor}) defined as follows. For a sheaf $\mathcal{F}$ on $X_{et}$ we let $\pi^* \mathcal{F}=a(\mathcal{F}')$ where for $V \to Y$ etale we have $\mathcal{F}'(V)=\varinjlim \mathcal{F}(U)$ where the direct limit is taken over the commutative diagrams $$\begin{tikzcd}
V \arrow{r}{a} \arrow{d}{b}
&U \arrow{d}{c}\\
Y \arrow{r}{d} &X
\end{tikzcd}$$ with $U \to X$ etale and $a(\mathcal{F}')$ denotes the (unique) sheaffication of $\mathcal{F}'$. See [2], Chapter 8 for the properties of these functors (we will need some of them later). 

We will have to use cohomology groups with \textit{compact support}. The definition for a torsion sheaf on a variety is as follows:
\begin{defn}
Let $F$ be a torsion sheaf on a variety $U$. The cohomology groups with compact support are $$H_c^r(U, \mathcal{F})=H^r(X, j_{!}\mathcal{F}),$$ where $X$ is any complete variety containing $U$ as a dense open subvariety and $j$ is the inclusion map (we denote by $j_{!}$ the extension by $0$ of the direct image functor $j_{*}$- see [2], Chapter 8). 
\end{defn}
Cohomology groups with compact support coincide with ordinary cohomology groups for complete varieties.

To check that cohomology groups with compact support are well-defined one needs to show that:

\textbf{(a)} Every variety admits a completion.

\textbf{(b)} Cohomology groups with compact support are independent of the completion.

See [2], Chapter 18 for the topological motivation of the above definition and proofs of both statements (the second proof requires the \textit{proper base-change theorem} of Chapter 17). 

We now aim to define $l$-adic cohomology and start by defining cohomology with coefficients in $\mathbb{Z}_l$ ($l \neq p$ - a prime number). The problem here is that etale cohomology groups with coefficients in non-torsion sheaves are anomalous. For example, using the ordinary definition we would always get $H^1(X_{et}, \mathbb{Z}_l)=0$ for any normal $X$. 

The way out is to define $$H^r(X_{et}, \mathbb{Z}_l)=\varprojlim H^r(X_{et}, \mathbb{Z}/l^n\mathbb{Z}).$$ Setting $$H^r(X_{et}, \mathbb{Q}_l)=H^r(X_{et}, \mathbb{Z}_l) \otimes_{\mathbb{Z}_l} \mathbb{Q}_l$$ gives us the notion of $l$-adic cohomology groups (the trick here is that \textit{cohomology does not commute with inverse limits} of sheaves). The definition for cohomology groups with compact support is similar. 

Later on (starting with Subsection 2.5) we are going to use "\textit{constructible $\mathbb{Q}_l$ sheaves}" and "\textit{locally-constant constructible $\mathbb{Q}_l$ sheaves}" as well as cohomology with coefficients in these sheaves. The definitions are as follows (compare to the properties stated in [7], 1.9):

\begin{defn}
A sheaf $\mathcal F$ on $X_{et}$ is constructible if:

(i) For every closed immersion $i: Z \hookrightarrow X$ with $Z$ irreducible there exists a nonempty open subset $U \subset Z$ such that $(i^* \mathcal{F})|U$ is locally constant.

(ii) $\mathcal{F}$ has finite stalks (defined as in the classical case). 

\end{defn}

\begin{remark}
One can show that every torsion sheaf is the union of its constructible subsheaves.
\end{remark}

The following definition is motivated by analogy with the definition of a finitely generated $\mathbb{Z}_l$-module:

\begin{defn}
A sheaf of $\mathcal{M} $ of $\mathbb{Z}_l$-modules on $X$ (or a constructible $\mathbb{Z}_l$-sheaf) is a family $\{ \mathcal{M}_n, f_{n+1}: \mathcal{M}_{n+1} \to \mathcal{M}_n \}$ such that

(i) Each $\mathcal{M}_n$ is a constructible sheaf of $\mathbb{Z}/l^n \mathbb{Z}$ modules.

(ii) The maps $f_{n+1}$ induce isomorphisms $\mathcal{M}_{n+1}/l^n \mathcal{M}_{n+1} \overset{\sim} \to \mathcal{M}_n$. 
\end{defn}
Then we define the cohomology groups as $$H^r(X_{et}, \mathcal{M})=\varprojlim H^r(X_{et}, \mathcal{M}_n).$$ A sheaf of $\mathbb{Q}_l$-spaces is then just a $\mathbb{Z}_l$-sheaf except for the fact that we let $$H^r(X_{et}, \mathcal{M})=\varprojlim H^r(X_{et}, \mathcal{M}_n) \otimes_{\mathbb{Z}_l} \mathbb{Q}_l.$$ Again, the definitions of the corresponding cohomology groups with compact support are similar. 

We will say that a sheaf $\mathcal{M}=\{ \mathcal{M}_n \}$ is \textit{locally constant} if all the sheaves $\mathcal{M}_n$ are locally constant\footnote{This is an abuse of terminology. A locally constant sheaf (french. constant-tordu) on $X$ need not become trivial on any etale covering of $X$.}. We will sometimes (mostly to justify that a certain locally constant $\mathbb{Q}_l$-sheaf is constant, but it is also important to keep in mind for other situations) refer to the following proposition.
\begin{fact}
Assume that $X$ is connected and let $\bar x$ be a geometric point of $X$. For $\mathcal{F}$ a locally constant $\mathbb{Q}_l$-sheaf on $X$, $\pi_1(X, \bar x)$ acts\footnote{The etale fundamental group- see [2], Chapter 3.} on the stalks $\mathcal{F}_{\bar x}$ and the map $\mathcal{F} \to \mathcal{F}_{\bar x}$ defines an equivalence of categories:

(locally constant $\mathbb{Q}_l$-sheaves on $X$) $\to$ (continuous actions of $\pi_1(X, \bar x)$ on $\mathbb{Q}_l$-vector spaces of finite dimension).
\end{fact}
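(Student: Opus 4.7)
The plan is to bootstrap from the classical equivalence between finite locally constant sheaves on $X_{et}$ and finite continuous $\pi_1(X, \bar x)$-modules (which is essentially the characterisation of the étale fundamental group, cf.\ [2], Chapter 3) up to the $\mathbb{Z}_l$ level, and then invert $l$ to get the $\mathbb{Q}_l$ statement. Throughout, connectedness of $X$ is what makes each locally constant $\mathcal{M}_n$ correspond to a single finite continuous $\pi_1$-module, so the finite-level equivalence applies term by term in the inverse system.

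First I would handle the $\mathbb{Z}_l$ case. Given a locally constant $\mathbb{Z}_l$-sheaf $\mathcal{M} = \{\mathcal{M}_n, f_{n+1}\}$, each $\mathcal{M}_n$ corresponds to a finite continuous $\pi_1(X, \bar x)$-module $M_n$ (finite stalks being automatic by constructibility), and the transition maps $f_{n+1}$ induce $\pi_1$-equivariant isomorphisms $M_{n+1}/l^n M_{n+1} \xrightarrow{\sim} M_n$. Setting $M = \varprojlim M_n$ gives a finitely generated $\mathbb{Z}_l$-module carrying a continuous $\pi_1$-action, canonically identified with $\mathcal{M}_{\bar x}$ because taking stalks commutes with this filtered inverse limit. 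Conversely, a continuous finitely generated $\mathbb{Z}_l[\pi_1]$-module $M$ produces the system $\{M/l^n M\}$, which via the finite-level equivalence yields a locally constant $\mathbb{Z}_l$-sheaf; the two constructions are mutually inverse up to natural isomorphism.

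For the $\mathbb{Q}_l$ case, the nontrivial point is essential surjectivity: given a continuous representation of $\pi_1(X, \bar x)$ on a finite-dimensional $\mathbb{Q}_l$-vector space $V$, I would use compactness of the profinite group $\pi_1$ to produce a $\pi_1$-stable $\mathbb{Z}_l$-lattice $L \subset V$. Indeed, the image of $\pi_1$ in $\mathrm{GL}(V) \cong \mathrm{GL}_n(\mathbb{Q}_l)$ is compact, and any compact subgroup of $\mathrm{GL}_n(\mathbb{Q}_l)$ stabilises some lattice (e.g.\ the $\mathbb{Z}_l$-span of the orbit of any initial lattice, which is finite modulo each $\mathrm{GL}_n(\mathbb{Z}_l)$-open subgroup). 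The $\mathbb{Z}_l$-step applied to $L$ gives a $\mathbb{Z}_l$-sheaf whose tensor with $\mathbb{Q}_l$ has the required stalk $V$. Full faithfulness descends from the $\mathbb{Z}_l$-level statement by checking that $\mathrm{Hom}$ in both categories is the $\mathbb{Q}_l$-tensor of the $\mathbb{Z}_l$-$\mathrm{Hom}$ of any pair of integral models.

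The main obstacle I anticipate is not any single deep step but the bookkeeping that glues these levels together: one must verify that the topology on $\mathrm{Aut}(\mathcal{M}_{\bar x})$ induced by the inverse system matches the $l$-adic topology on $\mathrm{GL}_n(\mathbb{Z}_l)$, so that continuity on the two sides is the same condition, and one must check that the lattice construction in $\mathbb{Q}_l$ is well-defined up to the equivalence which makes $\mathbb{Q}_l$-sheaves independent of the integral model. The compactness of $\pi_1$ is what makes this all work and is, heuristically, the reason the coefficient ring $\mathbb{Q}_l$ is the right one (as opposed to $\mathbb{R}$ or $\mathbb{Q}_p$, discussed earlier).
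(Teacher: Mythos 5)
Note first that the paper does not actually prove this proposition: it is stated without argument, with pointers to Milne's notes for $\pi_1$ (Chapter 3) and for $\mathbb{Z}_l$- and $\mathbb{Q}_l$-sheaves (Chapter 19), so there is no ``paper's proof'' to compare against. Your argument, however, is the standard and correct one: it reduces to the classical finite-level equivalence between finite locally constant sheaves and finite continuous $\pi_1$-modules, passes to $\mathbb{Z}_l$-sheaves by taking inverse limits term by term (using that stalks commute with the limit by definition of the stalk of a $\mathbb{Z}_l$-sheaf, and that constructibility of each $\mathcal{M}_n$ forces finite stalks), and then passes to $\mathbb{Q}_l$-coefficients. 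The two genuinely non-formal points are both handled correctly: essential surjectivity at the $\mathbb{Q}_l$-level requires producing a $\pi_1$-stable lattice, which you obtain from compactness of the profinite group (the orbit of any initial lattice is finite because the lattice's stabilizer is open of finite index, so the sum of translates is a stable lattice); and full faithfulness is the fact that $\operatorname{Hom}$ in the isogeny category is $\operatorname{Hom}_{\mathbb{Z}_l} \otimes_{\mathbb{Z}_l} \mathbb{Q}_l$ of any integral models, which matches the definition of morphisms of $\mathbb{Q}_l$-sheaves. Your closing remark about matching the inverse-limit topology on $\operatorname{Aut}(\mathcal{M}_{\bar x})$ with the $l$-adic topology on $\mathrm{GL}_n(\mathbb{Z}_l)$ is exactly the bookkeeping that needs to be done for ``continuous'' to mean the same thing on both sides, and it goes through because the congruence subgroups $\ker(\mathrm{GL}_n(\mathbb{Z}_l) \to \mathrm{GL}_n(\mathbb{Z}/l^m))$ form a neighbourhood base of the identity. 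No gaps; this is the expected proof.
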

From now on we will deal with constructible $\mathbb{Q}_l$-sheaves only and call them just $\mathbb{Q}_l$-sheaves.

See [2], Chapter 19 for more details on $\mathbb{Z}_l$ and $\mathbb{Q}_l$-sheaves. 

\subsection{Higher direct images of sheaves, the Leray spectral sequence}

We will have to consider \textit{higher direct images} of sheaves in our analysis. Let $\pi: Y \to X$ be a morphism of schemes. The direct image functor $\pi_*$ is left exact (since it has a left adjoint $\pi^*$) and we can consider its right derived functors $R^i \pi_*$. Then for a sheaf $\mathcal{F}$ we call the sheaves $R^i \pi_* \mathcal{F}$ the higher direct images of $\mathcal{F}$. For the properties of these sheaves we direct the reader to [2], Chapter 12. We will need a way to compute the stalks of $R^i \pi_* \mathcal{F}$ (the proof is elementary).
\begin{fact}
The stalk of $R^r \pi_* \mathcal{F}$ at a geometric point $\bar x \to X$ is $\varprojlim H^r (U \times_X Y, \mathcal{F})$  where the limit is over all the etale neighborhoods $(U, u)$ of $\bar x$. 
\end{fact}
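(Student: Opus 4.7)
The plan is to realize $R^r \pi_* \mathcal{F}$ as the sheafification of the natural cohomology presheaf on $X_{et}$, and then read off stalks from the standard formula on an étale site. Concretely, I would introduce the presheaf $\mathcal{H}^r$ on $X_{et}$ defined by $\mathcal{H}^r(U) = H^r(U \times_X Y, \mathcal{F}|_{U \times_X Y})$ for every étale $U \to X$. This is functorial: an étale $X$-morphism $U' \to U$ base changes along $\pi$ to an étale morphism $U' \times_X Y \to U \times_X Y$, and restriction of étale cohomology along this map supplies the required transition maps.

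Next, I would identify the sheafification $\mathcal{H}^{r,\sharp}$ with $R^r \pi_* \mathcal{F}$. In degree zero, $\mathcal{H}^0 = \pi_* \mathcal{F}$ is already a sheaf, so the two agree trivially. To compare in higher degrees, I would exhibit both $(\mathcal{H}^{r,\sharp})_{r \geq 0}$ and $(R^r \pi_*)_{r \geq 0}$ as universal cohomological $\delta$-functors $Sh(Y_{et}) \to Sh(X_{et})$ agreeing in degree zero. Universality of $R^r \pi_*$ is built into the derived-functor construction. For the left-hand family, one embeds $\mathcal{F}$ into an injective sheaf $\mathcal{I}$ on $Y_{et}$ and uses that the restriction of $\mathcal{I}$ to the small étale site of each $U \times_X Y$ remains injective; hence $H^r(U \times_X Y, \mathcal{I}) = 0$ for $r > 0$, so the presheaves $\mathcal{H}^r$ (and therefore their sheafifications) are effaceable. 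Grothendieck's uniqueness theorem then forces $\mathcal{H}^{r,\sharp} \cong R^r \pi_* \mathcal{F}$.

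Finally, I would invoke the stalk formula on the étale site. Sheafification does not change stalks at a geometric point, so the stalk of $R^r \pi_* \mathcal{F}$ at $\bar{x}$ equals the stalk of $\mathcal{H}^r$ at $\bar{x}$. By definition, the latter is the limit of $\mathcal{H}^r(U) = H^r(U \times_X Y, \mathcal{F})$ over the system of étale neighborhoods $(U, u)$ of $\bar{x}$, which is exactly the expression $\varprojlim H^r(U \times_X Y, \mathcal{F})$ in the author's notation.

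The main obstacle is the second step: showing that the sheafified cohomology presheaves form the universal $\delta$-functor extending $\pi_*$, which rests on the fact that injective étale sheaves on $Y$ restrict to injective (hence acyclic) objects on every étale slice $U \times_X Y$. Once this input is in place, the first and third steps are essentially formal, and the stated description of the stalk drops out by combining the characterization of $R^r \pi_*$ as a sheafified presheaf with the standard stalk recipe on an étale site.
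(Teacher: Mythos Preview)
Your argument is the standard one and is correct. The paper itself offers no proof beyond the parenthetical remark that ``the proof is elementary,'' so there is nothing substantive to compare against; what you have written is precisely the textbook argument (identify $R^r\pi_*$ with the sheafification of the presheaf $U\mapsto H^r(U\times_X Y,\mathcal{F})$ via the universal $\delta$-functor characterization, then read off the stalk as the colimit over \'etale neighborhoods), which is what the paper's reference to Milne [2], Chapter 12 has in mind.

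One small notational point worth flagging: stalks on an \'etale site are \emph{direct} limits over the filtered system of \'etale neighborhoods, so the symbol $\varprojlim$ in the statement---and in your final line---should read $\varinjlim$. This is a typo in the paper which you have carried over, but your own reasoning (``the stalk of $\mathcal{H}^r$ at $\bar x$ is the limit of $\mathcal{H}^r(U)$ over the system of \'etale neighborhoods'') shows you mean the colimit, so the argument itself is unaffected.
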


In the next few paragraphs we outline the role of \textit{spectral sequences} in Etale Cohomology. See [19] for a great introduction to (abstract) spectral sequences. 

The following general result is  a powerful source for spectral sequences, including those appearing in Etale Cohomology theory (for example, we have a spectral sequence, relating the Etale Cohomology groups with their Cech analog).

\begin{thrm3}
Let \textbf{A}, \textbf{B} and \textbf{C} be abelian categories such that \textbf{A} and \textbf{B} have enough injectives. Let $F: \textbf{A} \to \textbf{B}$ and $G: \textbf{B} \to \textbf{C}$ be left exact functors such that $(R^i G)(FI)=0$ for $i>0$ if $I$ is injective (for example, it is true if $F$ preserves injectives). Then there exists a spectral sequence $$ E_2^{pq}=(R^p G)(R^q F)(A) \Rightarrow R^{p+q}(FG)(A).$$
\end{thrm3}

We will need the following consequence called the Leray spectral sequence.
\begin{thrm4}
Let $\pi: Y \to X$ be a morphism of schemes. Then for any sheaf $\mathcal{F}$ on $Y_{et}$ there is a spectral sequence $$E_2^{pq}=H^p(X_{et}, R^q \pi_*\mathcal{F}) \Rightarrow H^{p+q}(Y_{et}, \mathcal{F}).$$
\end{thrm4}

\begin{proof}
The Leray spectral sequence follows as a special case of Grothendieck existence theorem since the functors $\pi_*$ and $\Gamma(X, -)$ are left exact, their composition is $\Gamma(Y, -)$ and $\pi_*$ preserves injectives. 
\end{proof}

Note that the sequence above is exactly the same as the classical Leray spectral sequence in topology. The point (in both cases) is that there is a way to reconstruct the cohomology groups of $Y$ from the map $\pi$ and $X$. 

We will apply the results of this subsection to the constant sheaf $\mathbb{Q}_l$ in the later stages of the proof of the Riemann hypothesis. 

\subsection{Major results in Etale cohomology}

Now that we have defined $l$-adic cohomology, we can try to prove that it has the properties (similar to the topological statements) of a "good" cohomology theory. We state the results below (mostly) for $l$-adic cohomology groups for simplicity, though many are true for arbitrary (or constructible) sheaves. Usually, such results are first proved for torsion sheafs and then extended to the sheaf $\mathbb{Q}_l$ (and $\mathbb{Q}_l$-sheaves) by passing to formal limits. We also formulate all of the statements for a projective nonsingular variety $X$ but I will mention some of the possible generalizations. 

\begin{thrm5}
The $l$-adic cohomology groups $H^i(X, \mathbb{Q}_l)$ of a projective nonsingular variety $X$ purely of dimension $d$ (meaning that all the irreducible components of $X$ have dimension $d$) over an algebraically closed field $k$ of characteristic $p$ (possibly $p = 0$) satisfy the following properties (for each prime $l \neq p$):

\textbf{(a) Finiteness} For all $i \geq 0$  $H^i(X, \mathbb{Q}_l)=0$ is a finite-dimensional $\mathbb{Q}_l$ vector space (the same is true for $\mathbb{Q}_l$-sheaves).

\textbf{(b) Cohomological dimension} $H^i(X, \mathbb{Q}_l)=0$ for $i > 2d$ and $H^i(X, \mathbb{Q}_l)=0$ for $i>d$ if $X$ is affine (the same is true for $\mathbb{Q}_l$-sheaves).

\textbf{(c) Kunneth isomorphism theorem} For all $i \geq 0$ we have a canonical isomorphism $$H^i(X \times Y, \mathbb{Q}_l) \simeq \bigoplus_{k+l=i} H^k(X , \mathbb{Q}_l) \otimes_{\mathbb{Q}_l} H^l(Y , \mathbb{Q}_l)$$ induced by the cup-product. 

\textbf{(d) Poincare duality} There is an isomorphism $H^{2d}(X, \mathbb{Q}_l) \simeq \mathbb{Q}_l$ and for each $i \leq d$ a perfect pairing $$H^i(X, \mathbb{Q}_l) \otimes H^{2d-i}(X, \mathbb{Q}_l) \to H^{2d}(X, \mathbb{Q}_l) \simeq \mathbb{Q}_l$$ of finite-dimensional vector spaces induced by the cup-product. 

\textbf{(e) Lefshetz trace formula} Let $\varphi: X \to X$ be a regular map. Then  $$(\Gamma_{\varphi} \cdot \Delta)=\sum_i (-1)^iTr(\varphi^*, H^i(X, \mathbb{Q}_l)),$$ where $\varphi^*$ is the induced map in cohomology, $\Gamma_{\varphi}$ is the graph of $\varphi$, $\Delta$ is the diagonal of $X \times X$ and $(\Gamma_{\varphi} \cdot \Delta)$ is the intersection product (the number of fixed points counted with multiplicities). 

\textbf{ (f) Comparison theorem} If $X$ is a nonsingular variety over $\mathbb{C}$, then $$H^i(X, \mathbb{Q}_l) \otimes_{\mathbb{Q}_l} \mathbb{C} \simeq H^i(X_{\mathbb{C}}, \mathbb{C})$$ where the cohomology group on the right hand side is the ordinary cohomology group of $X_{\mathbb{C}}$. 

\textbf{ (g) Lifting to characteristic 0} Assume that $p \neq 0$ and $X$ can be lifted to a variety $X_1$ over a field $K$ of characteristic $0$. Let $K^{al}$ be the algebraic closure of $K$ and $X_{1, K^{al}}$ the corresponding variety over $K^{al}$. Then $$H^r(X, \mathbb{Q}_l) \simeq H^r(X_{1, K^{al}}, \mathbb{Q}_l).$$ In particular, the Betti numbers of $X$ equal those of $X_1$. 

Similar statements are true for the cohomology with compact support.

\end{thrm5}
The cup-product appearing in (c) and (d) can be (most easily) defined by using Cech cohomology as in the classical case.

Proving any of these results requires quite serious work. I will briefly mention the corresponding references and what one can expect to see there. I will also state an important consequence of (b) that we shall need to prove the Riemann hypothesis. 

\textbf{(a)} This is a consequence of the more general proper base-change theorem [2], Chapter 17 (which states roughly that: if the morphism of varieties $\pi: X \to S$ is proper and $\mathcal{F}$ is a constructible sheaf on $X$, then higher direct images $R^i \pi_* \mathcal{F}$ are constructible and we can compute the stalks of $R^i \pi_* \mathcal{F}$). The statement holds for all complete varieties (see [2], 19.1). 

\textbf{(b)}This is proved by first establishing a similar result for torsion sheaves. The proof of the first part is a straightforward induction (one needs to apply a spectral sequence along the way). See [2], Chapter 15. The proof of the second part is more complicated and requires a sophisticated induction from the case $d=1$ (proved separately) together with a limiting argument and the proper base-change theorem. 

We have the following consequence\footnote{Surjectivity (and that is all we will need) follows by considering the Gysin cohomology sequence of $X$ and $Z=Y \cap X$ (see [2], 28.3 for the proof and [2], Chapter 16 for the derivation of the Gysin sequence).} of the second statement in (b) (under the same assumptions):

\begin{thrm12}
For all $i \geq 2$ there are maps $$H^{i-2}(Y, \mathbb{Q}_l) \to H^i (X, \mathbb{Q}_l)$$ (cup-products with the cohomology class of a hyperplane) such that:

(i) For $i=d+1, \ H^{d-1}(Y, \mathbb{Q}_l)(-1) \to H^{d+1} (X, \mathbb{Q}_l)$ is surjective.

(ii) For $i > d+1, \ H^{i-2}(Y, \mathbb{Q}_l)(-1) \to H^i (X, \mathbb{Q}_l)$ is an isomorphism.
\end{thrm12}

For a thorough review of a generalization of this result to singular varieties see [26], Chapter 2. We have used the notion of the twisting sheaf that we are not going to discuss until Subsection 2.6. One can ignore this result until we refer to it in the proofs.

\textbf{(c)} This can be generalized in several ways. Firstly, the statement holds for all complete varieties. Secondly, exchanging $\mathbb{Q}_l$ by $\mathbb{Z}_l$ we find an analog of the topological Kunneth theorem for PIDs. Finally, the theorem can even be stated for arbitrary finite rings instead of the $p$-adics. The proof is most natural in the language of \textit{derived categories}. See [2], Chapter 22 for details. 

\textbf{(d)} This theorem is included here for completeness in a somewhat imprecise form (the isomorphism there is not canonical). We will discuss it in detail in Subsection 2.6. The theorem holds for any complete nonsingular variety $X$. We will also state a generalization to noncomplete varieties and locally constant $\mathbb{Q}_l$-sheaves. See Subsection 2.6 of the paper and 2.3 in [7] for a discussion and SGA 4, XVIII for complete proof.

\textbf{(e)} This theorem holds for any complete nonsingular variety $X$ (or for any incomplete variety such that $\varphi$ extends uniquely to the map on the completion of the variety and has only simple fixed points on the complement of the variety - see [2], Chapter 29). The proof mimics the proof of the topological statement (see [2], Chapter 25) but one needs to be familiar with the properties of the \textit{cycle map}. The theorem can also be generalized to $\mathbb{Q}_l$-sheaves as we shall see in the next section (the proof for that case is based on the notion of \textit{non-commutative traces} - see [2], Chapter 29). 

\textbf{(f)} The theorem holds for any nonsingular variety over $\mathbb{C}$ and cohomology groups with coefficients in a finite group. The proof is rather abstract but quite straightforward (see [2], Chapter 21). Using the notion of analytic varieties one can also generalize the theorem to singular varieties (and the proof is similar but longer - see SGA 4, XVI 4). 

\textbf{(g)} This holds for any variety $X$. We shall need this theorem to verify the "Betti numbers part" of the Weil conjectures. It is a consequence of the smooth base-change theorem (see [2], Chapter 20). 

I have to emphasize that neither the above results, though they are truly beautiful and allow one to prove the Weil conjectures, nor the mentioned generalizations constitute a full list of important results in $l$-adic (and etale) cohomology. 

The material covered in the next two subsections also appears in paragraphs 1 and 2 of [7] respectively.

\subsection{Cohomological interpretation of zeta and L-functions}

In this subsection we will provide a \textit{cohomological interpretation} of zeta functions (which proves most of the rationality conjecture) and move on to generalize this description to various $L$-functions.

Let $X_0$ be a nonsingular projective (or complete) variety over $\mathbb{F}_q$ and let $X$ be the corresponding variety over $\mathbb{\bar F}_q$. The Frobenius endomorphism introduced in Subsection 1.2 induces the map of cohomology spaces: $$F^*: H^i (X, \mathbb{Q}_l) \to H^i (X, \mathbb{Q}_l).$$

Grothendieck proved the following result:
\begin{lemma}
Under the assumptions of the above we have $$\#X^F=\sum_i (-1)^iTr(F^*, H^i(X, \mathbb{Q}_l))$$ (the right side, that is a priori an $l$-adic integer is an integer equal to the left side). 
\end{lemma}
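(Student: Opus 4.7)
The plan is to deduce this directly from the Lefschetz trace formula, item (e) of the Properties of $l$-adic cohomology theorem, applied to $\varphi = F$. Since $X$ is projective and nonsingular, the hypotheses of that theorem are met, and it immediately gives
\[
(\Gamma_F \cdot \Delta) = \sum_i (-1)^i \operatorname{Tr}(F^*, H^i(X, \mathbb{Q}_l)).
\]
So the content of the lemma reduces to identifying the intersection number $(\Gamma_F \cdot \Delta)$ with the \emph{unweighted} cardinality $\#X^F$. Since $X^F$ is a priori finite (as we shall see), this amounts to checking that every fixed point contributes multiplicity exactly $1$, i.e.\ that $\Gamma_F$ and $\Delta$ meet transversally in $X \times X$.

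For the transversality check, fix $x \in X^F$ and work in an affine \'etale neighborhood, where $F$ acts coordinate-wise as $(x_1, \dots, x_n) \mapsto (x_1^q, \dots, x_n^q)$. The key observation is purely characteristic-$p$: since $q = p^a$, the Jacobian of $F$ vanishes identically, so the induced map on tangent spaces $dF_x : T_x X \to T_x X$ is the zero map. Consequently,
\[
T_{(x,x)} \Gamma_F = \{(v, dF_x(v)) : v \in T_x X\} = T_x X \oplus 0,
\]
whereas $T_{(x,x)} \Delta = \{(v,v) : v \in T_x X\}$. These two subspaces of $T_x X \oplus T_x X$ have dimension $d$ each and intersect only in $0$, hence span the whole tangent space: this is precisely transversality at $(x,x)$. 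It follows that $\Gamma_F \cap \Delta$ is zero-dimensional and that each point of this intersection corresponds to a unique element of $X^F$ with local intersection multiplicity $1$, giving $(\Gamma_F \cdot \Delta) = \#X^F$.

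Combining the two steps yields the claimed identity. The integrality remark in the statement --- that the a priori $l$-adic trace is actually a rational integer --- is then automatic, since the left-hand side $\#X^F$ is a nonnegative integer. I expect the only conceptually delicate point to be the transversality computation, and the whole argument hinges on the fact that $F$ is purely inseparable so that $dF = 0$; once one sees that the tangent space to $\Gamma_F$ collapses to the ``horizontal'' copy of $T_x X$, everything falls into place. The heavy lifting --- the construction of the cycle class map, the proof of the Lefschetz trace formula itself, and the finiteness of $H^i(X,\mathbb{Q}_l)$ --- is absorbed into the black boxes (a) and (e) of the Properties of $l$-adic cohomology theorem.
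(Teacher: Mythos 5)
Your proposal is correct and follows essentially the same route as the paper: apply the Lefschetz trace formula, then show each fixed point has multiplicity one by computing that $dF_x = 0$ in characteristic $p$ (since $\partial(x_i^q)/\partial x_j = q x_i^{q-1}\delta_{ij} = 0$). The only cosmetic difference is that you verify transversality directly by intersecting $T_{(x,x)}\Gamma_F = T_xX \oplus 0$ with $T_{(x,x)}\Delta$, whereas the paper appeals to a separate lemma stating that $(\Gamma_\varphi \cdot \Delta)_P = 1$ whenever $1$ is not an eigenvalue of $(d\varphi)_P$ — these are the same argument packaged differently.
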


\begin{proof}
For this to make sense we need to show that the fixed points of $X^F=X_0(\mathbb{F}_q)$ all have multiplicity $1$ in $(\Gamma_{\varphi} \cdot \Delta)$ - that is $(\Gamma_{\varphi} \cdot \Delta)_P=1$ for any fixed point $P$ of $F$. Then the statement follows by applying the Lefschetz trace formula. 

To show that $(\Gamma_{\varphi} \cdot \Delta)_P=1$ we calculate $(dF)_P$. Replacing $X_0$ with an affine neighborhood $U_0=Spec A_0, \ A_0=\mathbb{F}_q[x_1, \cdots, x_d]$, for every $i$ we have $x_i \circ F=x_i^q$ and so $$(dx_i)_P \circ (dF)_P=(dx_i^q)_P=qx_i^{q-1}(dx_i)_P=0.$$ Therefore, $(dF)_P=0$ and the next lemma implies that $(\Gamma_{\varphi} \cdot \Delta)_P=1$.

\end{proof}

\begin{lemma}
Let $\varphi:X \to X$ be a regular map and let $P \in X$ be a fixed point of $\varphi$. Then $(\Gamma_{\varphi} \cdot \Delta)_P=1$ if $1$ is not an eigenvalue of $(d \varphi)_P$. 
\end{lemma}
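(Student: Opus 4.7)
The statement is a transversality criterion, so I would prove it by reducing the intersection multiplicity at $P$ to a linear algebra condition on tangent spaces. The key identification is
\[
T_{(P,P)}(X \times X) \;=\; T_P X \oplus T_P X,
\]
under which $T_{(P,P)} \Delta = \{(v,v) : v \in T_P X\}$ and $T_{(P,P)} \Gamma_\varphi = \{(v, (d\varphi)_P v) : v \in T_P X\}$. A vector lies in the intersection of these two subspaces iff $v = (d\varphi)_P v$, i.e.\ iff $v$ is a $1$-eigenvector of $(d\varphi)_P$. So the hypothesis says exactly that $T_{(P,P)}\Gamma_\varphi \cap T_{(P,P)}\Delta = 0$, and since both subspaces have dimension $d = \dim X$ in the $2d$-dimensional ambient tangent space, this is equivalent to transversality $T_{(P,P)}\Gamma_\varphi \oplus T_{(P,P)}\Delta = T_{(P,P)}(X\times X)$.

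\textbf{From transversality to multiplicity $1$.} Once transversality is established, I would invoke the standard fact that for two nonsingular subvarieties $Y, Z$ of a nonsingular variety $W$ meeting transversally at a point $P$ (so $Y \cap Z$ is smooth of the expected dimension at $P$ and $T_PY + T_PZ = T_PW$), the local intersection multiplicity $(Y \cdot Z)_P$ equals $1$. Concretely, one checks that
\[
\mathcal{O}_{X \times X,(P,P)} \big/ \bigl(I_{\Gamma_\varphi} + I_\Delta\bigr)
\]
has length $1$ over the residue field: picking regular parameters for $\Gamma_\varphi$ and complementing them with regular parameters cutting out $\Delta$ gives a regular system of parameters for $\mathcal{O}_{X\times X,(P,P)}$ (this uses that the differentials of the defining equations of $\Gamma_\varphi$ and $\Delta$ jointly span the cotangent space at $(P,P)$, which is the cotangent dual of the transversality statement above).

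\textbf{What I expect to be routine versus delicate.} The linear algebra and the computation of tangent spaces to $\Gamma_\varphi$ and $\Delta$ is routine: $\Gamma_\varphi$ is the image of the graph morphism $(\mathrm{id}, \varphi) : X \to X \times X$, whose differential at $P$ is $v \mapsto (v, (d\varphi)_P v)$, and similarly for $\Delta$. The subtler point is justifying that transversality genuinely implies $(\Gamma_\varphi \cdot \Delta)_P = 1$ in the algebraic sense; this is the one place where one must be careful about the definition of the intersection product used in the Lefschetz trace formula of Theorem (e). Any clean reference (e.g.\ Milne [2], Chapter 25, or a direct computation with regular parameters as sketched above) suffices, and will be the only step requiring more than symbol-pushing. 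No other input is needed: the conclusion follows immediately from the dimension count plus this last algebraic fact.
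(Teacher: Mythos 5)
Your proposal is correct and takes the same approach the paper gestures at: the paper's entire proof is a citation to Milne [2], 25.6, which is precisely the transversality criterion you state (the local intersection multiplicity of two nonsingular subvarieties meeting transversally is $1$), applied after the tangent-space computation showing $T_{(P,P)}\Gamma_\varphi \cap T_{(P,P)}\Delta = 0$ iff $1$ is not an eigenvalue of $(d\varphi)_P$. You have simply spelled out the details the paper defers to the reference.
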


\begin{proof}
The proof is elementary and follows from the general criterion for $( Y \cdot Z)_P=1$ (where $Y, \ Z$ are closed subvarieties of $X$). See [2], 25.6 for details. 
\end{proof}

Observing that $F^n$ is the Frobenius endomorphism relative to $X_0(\mathbb{F}_{q^n})$ gives the following formula: $$\#X^{F^n}=\# X_0(\mathbb{F}_{q^n})=\sum_i (-1)^iTr(F^{*n}, H^i(X, \mathbb{Q}_l)) \ \ \textbf{(3)}.$$

\newpage

We need an elementary lemma: 
\begin{lemma}
If $F: V \to V$ is an endomorphism of a vector space over $k$, we have a formal power series identity: $$\log(\det(1-Ft, V)^{-1})=\sum_{n > 0} Tr(F^n, V)\frac{t^n}{n} \ \ \textbf{(4)}.$$
\end{lemma}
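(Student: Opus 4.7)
The plan is to reduce to the diagonal (or upper-triangular) case and then just expand $\log$ as a power series. First I would observe that both sides of the identity are unchanged under extension of scalars from $k$ to an algebraic closure $\bar{k}$: the coefficients of $\det(1-Ft,V)$ and the traces $\operatorname{Tr}(F^n,V)$ are invariants of the endomorphism and are insensitive to the base field. So without loss of generality we may assume $k$ is algebraically closed.

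Once $k=\bar{k}$, I would pick a basis in which $F$ is upper triangular (such a basis exists over an algebraically closed field by elementary linear algebra, or by Jordan normal form). Let $\lambda_1,\dots,\lambda_d$ be the diagonal entries, i.e. the eigenvalues of $F$ listed with algebraic multiplicity. Then $F^n$ is also upper triangular with diagonal entries $\lambda_1^n,\dots,\lambda_d^n$, and $1-Ft$ is upper triangular with diagonal entries $1-\lambda_1 t,\dots,1-\lambda_d t$. Consequently
$$\det(1-Ft,V)=\prod_{i=1}^d (1-\lambda_i t),\qquad \operatorname{Tr}(F^n,V)=\sum_{i=1}^d \lambda_i^n.$$

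Now I would combine these with the formal power series expansion $\log(1-x)^{-1}=\sum_{n\geq 1} x^n/n$, valid in $k[[t]]$:
$$\log\bigl(\det(1-Ft,V)^{-1}\bigr)=-\sum_{i=1}^d \log(1-\lambda_i t)=\sum_{i=1}^d \sum_{n\geq 1}\frac{(\lambda_i t)^n}{n}=\sum_{n\geq 1}\Bigl(\sum_{i=1}^d \lambda_i^n\Bigr)\frac{t^n}{n},$$
and the inner sum is precisely $\operatorname{Tr}(F^n,V)$, giving the claimed identity \textbf{(4)}.

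The only mild subtlety is the base-change step and the justification that we may work with eigenvalues even when $k$ is not algebraically closed; once that is in place the argument is a one-line manipulation of formal power series, and there is no real obstacle. (Alternatively, one could avoid extending scalars by observing that both sides are universal polynomial identities in the coefficients of the characteristic polynomial, hence it suffices to verify them generically, where $F$ is diagonalizable.)
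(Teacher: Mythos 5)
Your proof is correct and follows essentially the same path as the paper's: extend scalars so that $F$ can be upper-triangularized, read off $\det(1-Ft,V)$ and $\operatorname{Tr}(F^n,V)$ from the diagonal entries, and conclude by expanding $\log(1-\lambda_i t)^{-1}$ as a formal power series and summing over $i$. The extra remarks you make about invariance under base change (and the alternative ``generic identity'' argument) are harmless elaborations of the step the paper abbreviates as ``after possibly extending $k$.''
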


\begin{proof}
Factor $S(t)=\det(1-Ft, V)$ as $S(t)=\prod (1-c_it)$. Then (after possibly extending $k$) we may assume that there exists a basis such that the matrix of $\varphi$ is upper-triangular with $c_i$-s on the diagonal. Then the matrix of $\varphi^n$ is also upper-triangular with $c_i^n$-s on the diagonal and so we have $Tr(\varphi^n, V)=\sum c_i^n$. Then the statement follows by summing both sides of $$\log((1-c_i t)^{-1})=\sum_{n>0} c_i^n \frac{t^n}{n}$$ over $i$. 
\end{proof}

\begin{thrm6}
For any projective (or complete nonsingular) variety $X_0$ of dimension $d$ over $\mathbb{F}_q$ we have $$Z(X_0, t)=\frac{P_1(X_0, t)P_3(X_0, t) \cdots P_{2d-1}(t)}{P_0(X_0, t) P_2(X_0, t) \cdots P_{2d}(X_0, t)} \ \ \textbf{(5)},$$ where $$P_i(X_0, t)=\det(1-F^* t, H^i(X, \mathbb{Q}_l)).$$
\end{thrm6}

\begin{proof}
This results from the following computation: $$Z(X_0, t)=\exp \left( \sum_{n \geq 1}  \# X_0(\mathbb{F}_{q^n}) \frac{t^n}{n} \right) \overset{(3)} =\exp \left( \sum_{n \geq 1} \left(\sum_i (-1)^iTr(F^{*n}, H^i(X, \mathbb{Q}_l))\right)\frac{t^n}{n}\right)=$$ $$=\prod_i \left( \exp(\sum_{n \geq 1} Tr(F^{*n}, H^i(X, \mathbb{Q}_l)) \frac{t^n}{n}) \right)^{(-1)^{i}} \overset{(4)}= \ \prod_i P_i(X_0, t)^{(-1)^{i+1}},$$ where the third equality is obtained by moving the inner sum outside. 
\end{proof}

\begin{remark}
The right side of (5) is in $\mathbb{Q}_l(t)$ and the theorem implies that its Taylor expansion at $t=0$, a priori a formal series in $\mathbb{Q}_l [[t]]$ with constant coefficient one, is in $\mathbb{Z}[[t]]$ and is equal to the left side, also considered as a formal series in $t$. In particular, we have $Z(X_0, t) \in \mathbb{Z}[[t]] \cap \mathbb{Q}_l(t)$. Since $F^*$ acts as $1$ on $H^0(X, \mathbb{Q}_l)$ and as $q^d$ on $H^{2d}(X, \mathbb{Q}_l)$, we have $$P_0(X_0, t)=1-t, \ P_{2d}(X_0, t)=1-q^d t.$$
\end{remark}

\begin{remark}
This theorem is straightforwardly (one needs to replace cohomology groups by cohomology groups with compact support) generalized to an arbitrary variety\footnote{The study of Shimura varieties suggests that our definition of the zeta function is not in fact the correct one for noncomplete varieties $X_0$.} $X_0$ over $\mathbb{F}_q$ (possibly singular or noncomplete) and then one can similarly show that $Z( X_0, t) \in \mathbb{Q}(t)$. However, it is not known in general whether $P_i(X_0, t)$ are independent of $l$ nor even that they have coefficients in $\mathbb{Q}$. See [7] and [18], 289-298 for details. 
\end{remark}

Let me now consider locally constant $\mathbb{Q}_l$-sheaves and generalize this result to $L$-functions. We will follow the same path (but let $X_0$ be arbitrary for a change) though the first few steps are more subtle.

Let $X_0$ be an algebraic variety over $\mathbb{F}_q$, $\mathcal{F}_0$ a locally constant $\mathbb{Q}_l$-sheaf on $X_0$, $X$ the corresponding variety over $\mathbb{\bar F}_q$ and $\mathcal{F}$ the inverse image of $\mathcal{F}_0$ on $X$. We first need to define an endomorphism of $H_c^i(X, \mathcal{F})$. A similar map above (for the ordinary cohomology) was simply induced by $F$ and for $\mathcal{F}=\mathbb{Q}_l$ we can do the same here (since $F$ is finite, thus proper). However, if we try to do the same for an arbitrary $\mathcal{F}$, we will get a map $$\varphi: H_c^i(X, \mathcal{F}) \to H_c^i(X, F^*\mathcal{F}).$$ Luckily, there exists a canonical isomorphism $F^*: F^* \mathcal{F} \overset{\sim} \to \mathcal{F}$ (that is also a finite map) and we can define the required endomorphism as the composition\footnote{Sorry for the abuse of notation, I was just following the master.} $$F^*: H_c^i(X, \mathcal{F}) \to H_c^i(X, F^*\mathcal{F}) \to H_c^i(X, \mathcal{F}) \ \ \textbf{(6)}.$$

The isomorphism is constructed as follows (those unfamiliar with \textit{algebraic spaces} may skip this argument). First let $\mathcal{F}$ be the sheaf of sets (for the etale topology). Then we identify $\mathcal{F}_0$ with its etale space $f_0: [\mathcal{F}_0] \to X_0$\footnote{If $\mathcal{F}$ is a sheaf on $X$, then there always exists an algebraic space $[\mathcal{F}]$ equipped with an etale morphism $f: [\mathcal{F}] \to X$ such that $\mathcal{F}$ is the sheaf of local sections of $[\mathcal{F}]$.} and let $f: [\mathcal{F}] \to X$ be a similar etale space for $\mathcal{F}$ obtained by extension of scalars of $[\mathcal{F}_0]$. Then we have a commutative diagram 
\[
\begin{tikzcd}
{[\mathcal{F}]} \arrow{r}{F} \arrow[swap]{d}{f} & {[\mathcal{F}]} \arrow{d}{f} \\
X \arrow{r}{F} & X
\end{tikzcd}
\]
and a morphism $[\mathcal{F}] \to X \times_{(F, X, f)} [\mathcal{F}]=[F^* \mathcal{F}]$, that is an isomorphism because $f$ is etale. The inverse of this isomorphism defines the isomorphism $F^* \mathcal{F} \overset{\sim} \to \mathcal{F}$ that we seek. This construction is then generalized to $\mathbb{Q}_l$-sheaves. 

For $x \in |X|$ \footnote{The set of closed points of $X$.}, (6) defines a morphism that is  an endomorphism of $\mathcal{F}_x$ for $x \in X^F$.

The following formula is another result of Grothendieck (see [2], Chapter 29 for the proof in the case of curves): $$\sum_{x \in X^F} Tr(F_x^*, \mathcal{F}_x)=\sum_i (-1)^i Tr(F^*, H_c^i(X, \mathcal{F})).$$ 

Since the $n$-th iteration of $F^*$ defines morphisms $F_x^{*n}: \mathcal{F}_{F^n(x)} \to \mathcal{F}_x$ (that are endomorphisms for $x$ fixed under $F^n$) we have a similar formula for the iterations of $F^*$ $$\sum_{x \in X^{F^n}} Tr(F_x^{*n}, \mathcal{F}_x)=\sum_i (-1)^i Tr(F^{*n}, H_c^i(X, \mathcal{F})) \ \ \textbf{(7)}.$$

\begin{remark}
Let $x_0 \in |X|$, $Z$ the orbit corresponding to $F$ in $|X|$ and $x \in Z$. Then $Z$ has $\deg(x_0)$ elements. Denote by $F_{x_0}^*$ the endomorphism $F_x^{\deg(x_0)}$ of $\mathcal{F}_x$. $F^*$ is a local isomorphism so it induces an isomorphism of stalks and we may drop $x$ in the notation. Let $$\det(1-F_{x_0}^*t, \mathcal{F}_0)=\det(1-F_{x_0}^*t, \mathcal{F}_x).$$ We will use a similar notation for other functions of $(\mathcal{F}_x, F_{x_0}^*)$.
\end{remark}

We define the $L$-functions (or zeta functions of locally constant $\mathbb{Q}_l$-sheaves) as follows.

\begin{defn}
Let $X_0$ be a variety over $\mathbb{F}_q$ and $\mathcal{F}_0$ be a locally constant $\mathbb{Q}_l$-sheaf. Then we define $Z(X_0, \mathcal{F}_0, t)$ by $$\log Z(X_0, \mathcal{F}_0, t)=\sum_n \sum_{x \in X^{F^n}=X_0(\mathbb{F}_{q^n})} Tr(F_x^{*n}, \mathcal{F}_0)\frac{t^n}{n}.$$
\end{defn}
For the constant sheaf $\mathbb{Q}_l$ we recover $Z(X_0, t)$. One verifies as in the classical case that $$Z(X_0, \mathcal{F}_0, t)=\prod_{x \in |X_0|} \det(1-F_x^*t^{\deg(x)}, \mathcal{F}_0)^{-1}$$ and obtains (using (7) instead of (3) in the calculation) the cohomological interpretation of $L$-functions. 

\begin{thrm7}
For any variety $X_0$ of dimension $d$ over $\mathbb{F}_q$ and any locally-constant $\mathbb{Q}_l$-sheaf $\mathcal{F}_0$ on $X_0$ we have a formal series identity $$Z(X_0, \mathcal{F}_0, t)=\frac{P_1(X_0, t)P_3(X_0, t) \cdots P_{2d-1}(t)}{P_0(X_0, t) P_2(X_0, t) \cdots P_{2d}(X_0, t)}$$ where $$P_i(X_0, t)=\det(1-F^* t, H_c^i(X, \mathcal{F})).$$
\end{thrm7}

\begin{remark}
Up to this moment we have always used a geometric description for the action of Frobenius. However, sometimes it is useful to refer to the Galois side of the matter. One can verify that $Gal(\mathbb{\bar F}_q/\mathbb{F}_q)$ acts on $H_c^*(X, \mathcal{F})$ and that $F^*=\varphi^{-1}$ in $End(H_c^*(X, \mathcal{F}))$. Then we define the geometric Frobenius $F \in Gal(\mathbb{\bar F}_q/\mathbb{F}_q)$ as $F=\varphi^{-1}$ and obtain $F^*=F$. We will sometimes use the latter notation.
\end{remark}

\subsection{Poincare duality} 

This subsection serves three goals. First, we state the precise form of the Poincare duality theorem (and its generalization). Then, we define the \textit{Gysin map} that we will encounter a few times. Finally, we find some cohomology groups of a connected smooth curve and state yet another version of Poincare duality for the case of curves (these results will be used to complete the proof of the Riemann hypothesis).

Let $X$ be an algebraic variety \textit{purely} of dimension $n$ over an algebraically closed field $k$. We will now define the \textit{orientation sheaf} $\mathbb{Q}_l(n)$ of $X$ (and the twisting sheaves $\mathbb{Q}_l(i), \ i \in \mathbb{Z} \setminus \{0\}$). 

By analogy with the classical case of complex varieties (see [7], 2.1) we can say that "\textit{choosing an orientation}" of a variety $X$ over an algebraically closed field $k$ is the same as \textit{choosing an isomorphism of $\mathbb{Q}/\mathbb{Z}$ with the group of the roots of unity of $k$} (in the complex case this amounts to the choice of one of the two isomorphisms $x \to \exp(\pm 2 \pi i x)$ from $\mathbb{Q}/\mathbb{Z}$ to the group of the roots of unity of $\mathbb{C}$). If we are interested in $l$-adic cohomology ($l \neq char k$) of $X$ then we only need to consider the roots of order a power of $l$. Denote by $\mathbb{Z}/l^n(1)$ the group of the roots of unity of $k$ of order dividing $l^n$. Then the groups $\mathbb{Z}/l^n(1)$ form a \textit{projective system} with transition maps $$\sigma_{m, n}: \mathbb{Z}/l^m(1) \to \mathbb{Z}/l^n(1): x \to x^{l^{m-n}}$$ and we define $\mathbb{Z}_l(1)=\varprojlim \mathbb{Z}/l^n (1)$, $\mathbb{Q}_l(1)=\mathbb{Z}_l(1) \otimes_{\mathbb{Z}_l} \mathbb{Q}_l$, $\mathbb{Q}_l(i)=\overset{i} \otimes \mathbb{Q}_l(1)$ for $i > 0$ and $\mathbb{Q}_l(i)=\mathbb{Q}_l(-i \widecheck )$ (the \textit{dual sheaf}) for $i < 0$. 

Note that $\mathbb{Q}_l(1)$ is isomorphic to $\mathbb{Q}_l$ but the isomorphism is not canonical. 
The group of automorphisms of $k$ acts trivially on $\mathbb{Q}_l$ but not on $\mathbb{Q}_l(1)$. In particular, for $k=\mathbb{\bar F}_q$, the substitution of Frobenius $\varphi: x \to x^q$ acts by multiplication by $q$ on $\mathbb{Q}_l(1)$. 

We can now state the Poincare duality theorem: 
\begin{thrm8}
For $X$ nonsingular and complete (for example, projective), purely of dimension $n$, the bilinear form $$Tr(x \cup y): H^i(X, \mathbb{Q}_l) \otimes H^{2n-i}(X, \mathbb{Q}_l) \to \mathbb{Q}_l(-n)$$ is a perfect paring (it identifies $H^i(X, \mathbb{Q}_l)$ with the dual of $H^{2n-i}(X, \mathbb{Q}_l(n))$).
\end{thrm8}

One can generalize this theorem as follows (the case of curves will formally be enough for our purposes but it is still good to keep the higher-dimensional picture in mind). If $\mathcal{F}$ is a $\mathbb{Q}_l$-sheaf on an algebraic variety $X$ over an algebraically closed $k$, denote by $\mathcal{F}(r)$ the sheaf $\mathcal{F} \otimes \mathbb{Q}_l(r)$ (this sheaf is non-canonically isomorphic to $\mathcal{F}$).

\begin{thrm9}
Let $X$ be a nonsingular variety  purely of dimension $n$ and $\mathcal{F}$ be a locally constant sheaf. We denote by $\mathcal{\widecheck F}$ the dual of $\mathcal{F}$. The bilinear form $$Tr(x \cup y): H^i(X, \mathcal{F}) \otimes H_c^{2n-i}(X, \mathcal{\widecheck F}(n)) \to H_c^{2n}(X, \mathcal{F} \otimes \mathcal{ \widecheck F}(n)) \to H_c^{2n}(X, \mathbb{Q}_l(n)) \to \mathbb{Q}_l$$ is a perfect pairing. 
\end{thrm9}

We will now define the \textit{Gysin map}. Let $\pi: Y \to X$ be a proper map of nonsingular separated varieties over an algebraically closed field $k$. Let $m=\dim X$, $n=dim Y$ and $e=n-m$. Then there is a restriction map $$\pi^*: H_c^{2n-i}(X, \mathbb{Q}_l(n)) \to H_c^{2n-i} (Y, \mathbb{Q}_l(n))$$ and by duality we also get a map $$\pi_*: H^i(Y, \mathbb{Q}_l(e)) \to H^{i-2e}(X, \mathbb{Q}_l).$$ The latter is called the \textit{Gysin map}. Note that if $\pi$ is an endomorphism of $X$ (we shall use this for the Frobenius endomorphism $F$), then we get a map $\pi_*: H^i(X, \mathbb{Q}_l) \to H^i(X, \mathbb{Q}_l)$. The Gysin map satisfies a bunch of good properties summarized in [2], Chapter 24. We are going to use the fact that if $\pi$ above is a finite map of degree $\delta$ then $\pi_* \circ \pi^*=\delta$. 

The second form of Poincare duality allows us to prove:

\begin{fact}
Let $X$ be a connected smooth curve over an algebraically closed field $k$, $x$ a closed point of $X$ and $\mathcal{F}$ a locally constant $\mathbb{Q}_l$-sheaf. Then we have:

(i)$H_c^0(X, \mathcal{F})=0$ if $X$ is affine.

(ii)$H_c^{2}(X, \mathcal{F})=(\mathcal{F}_x)_{\pi_1(X, x)}(-1).$
\end{fact}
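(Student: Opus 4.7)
The plan is to deduce both statements from the second form of Poincaré duality (which is applicable since $X$ is a smooth curve and $\mathcal F$ is locally constant), combined with the cohomological dimension of affine varieties for (i) and the Galois description of locally constant $\mathbb Q_l$-sheaves (the proposition about $\pi_1(X,\bar x)$-representations) for (ii).

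For part (i), I would apply Poincaré duality with $n=1$ and the sheaf $\check{\mathcal F}(1)$, which gives a perfect pairing
\[
H^2(X,\check{\mathcal F}(1))\otimes H_c^0(X,\mathcal F)\longrightarrow \mathbb Q_l
\]
(using $\check{\check{\mathcal F}(1)}(1)=\mathcal F$). Since $X$ is an affine variety of dimension $1$, the cohomological dimension bound from property (b) of the main theorem (extended to $\mathbb Q_l$-sheaves) forces $H^2(X,\check{\mathcal F}(1))=0$, and perfectness of the pairing then yields $H_c^0(X,\mathcal F)=0$.

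For part (ii), the same duality with $i=0$ and sheaf $\check{\mathcal F}(1)$ identifies $H_c^2(X,\mathcal F)$ with the $\mathbb Q_l$-linear dual of $H^0(X,\check{\mathcal F}(1))$. Now I would invoke Proposition 1: since $\check{\mathcal F}(1)$ is locally constant and $X$ is connected, the functor "take the stalk at $x$" equates it with a continuous finite-dimensional $\pi_1(X,x)$-representation, and global sections correspond to the invariants. Thus
\[
H^0(X,\check{\mathcal F}(1)) \;=\; \bigl(\check{\mathcal F}_x\otimes\mathbb Q_l(1)\bigr)^{\pi_1(X,x)}.
\]
Because $k$ is algebraically closed, all $l$-power roots of unity already lie in $k$, so $\mathbb Q_l(1)$ is a \emph{constant} sheaf on $X$ and $\pi_1(X,x)$ acts trivially on it; the twist factors out of the invariants. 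Finally, for a finite-dimensional continuous representation $V$ of a (profinite) group $G$, the standard identification $(V^G)^\vee\cong (V^\vee)_G$ between the dual of the invariants and the coinvariants of the dual gives
\[
H_c^2(X,\mathcal F) \;\cong\; \bigl((\check{\mathcal F}_x)^{\pi_1(X,x)}\bigr)^\vee \otimes \mathbb Q_l(-1) \;\cong\; (\mathcal F_x)_{\pi_1(X,x)}(-1),
\]
as claimed.

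The only slightly delicate point is keeping careful track of the Tate twists and of the fact that dualization turns invariants into coinvariants; no new geometric input is needed beyond the two "black-box" results cited (Poincaré duality and the equivalence of categories with $\pi_1$-representations). In particular, no spectral sequence or base-change argument intervenes — the whole proof is formal once these two inputs are granted.
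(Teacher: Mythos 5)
Your argument for part (ii) is essentially the paper's: apply the second form of Poincar\'e duality to reduce to $H^0$, use Proposition~1 to identify $H^0$ of a locally constant sheaf on a connected scheme with $\pi_1$-invariants of the stalk, note that $\mathbb{Q}_l(1)$ carries a trivial $\pi_1$-action over an algebraically closed base, and use that dualization swaps invariants and coinvariants. That matches the paper line for line, including the bookkeeping of the Tate twist.

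For part (i), however, you take a genuinely different route. The paper does not use Poincar\'e duality here at all: it simply interprets $H_c^0(X,\mathcal F)=H^0(\bar X,j_!\mathcal F)$ as the space of sections of $\mathcal F$ over $X$ with proper (hence finite) support, and observes that a locally constant sheaf on a connected noncomplete curve has no such nonzero sections (the support of a section of a locally constant sheaf is open as well as closed). Your version instead pairs $H_c^0(X,\mathcal F)$ against $H^2(X,\check{\mathcal F}(1))$ via duality and kills the latter by the affine cohomological-dimension bound $H^i=0$ for $i>\dim X$ (which the paper does assert for $\mathbb{Q}_l$-sheaves). Both are correct. The paper's argument is more elementary and self-contained (a one-line remark on supports), while yours buys uniformity: it treats (i) and (ii) by the exact same duality mechanism, at the cost of invoking two black-box theorems (duality plus Artin's affine cohomological dimension) where the paper needs none. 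One small point worth making explicit in your write-up of (i) is the biduality $\check{\check{\mathcal F}(1)}(1)\cong\mathcal F$ that you rely on; it holds here because $\mathcal F$ is a $\mathbb{Q}_l$-sheaf with finite-dimensional stalks, but it is a hypothesis rather than a formality.
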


\begin{proof}
(i) simply states that $\mathcal{F}$ does not have sections with finite support. 

For the proof of (ii) we invoke the equivalence of Proposition 1 between the category of locally constant $\mathbb{Q}_l$ sheaves and that of (continious) $l$-adic representations of $\pi_1(X, x)$. Via this equivalence $H^0(X, \mathcal{F})$ is identified with the invariants of $\pi_1(X, x)$ acting on $\mathcal{F}_x$: $$H^0(X, \mathcal{F}) \overset{\sim} \to \mathcal{F}_x^{\pi_1(X, x)}.$$ According to the second form of Poincare duality, for $X$ nonsingular and connected of dimension $n$ we have $$H_c^{2n}(X, \mathcal{F})=H^0(X, \mathcal{\widecheck F}(n) \widecheck )=((\mathcal{\widecheck F}_x(n))^{\pi_1(X, x)} \widecheck ).$$ The duality exchanges \textit{invariants} (the largest invariant subspaces) to \textit{coinvariants} (the largest invariant quotients). So we have $$H_c^{2n}(X, \mathcal{F})=(\mathcal{F}_x)_{\pi_1(X, x)}(-n).$$ and (ii) is the case $n=1$. 
\end{proof}

Let $X$ be a connected smooth projective curve over an algebraically closed field $k$, $U$ an open in $X$, $j$ the inclusion $U\xhookrightarrow{} X$, $\mathcal{F}$ a locally constant $\mathbb{Q}_l$-sheaf on $U$ and $j_*\mathcal{F}$ the direct image of $\mathcal{F}$ (it is a constructible $\mathbb{Q}_l$-sheaf\footnote{This holds in general - the image of a constructible $\mathbb{Q}_l$-sheaf is always a constructible $\mathbb{Q}_l$-sheaf.}). We conclude this section with the following result.

\begin{thrm10}
The bilinear form $$Tr(x \cup y): H^i(X, j_*\mathcal{F}) \otimes H^{2-i}(X, j_*\mathcal{\widecheck F}(1)) \to H^{2}(X, j_*\mathcal{F} \otimes j_*\mathcal{ \widecheck F}(1)) \to$$ $$ \to H^{2}(X, j_*(\mathcal{F} \otimes \mathcal{ \widecheck F})(1)) \to H_c^{2}(X, \mathbb{Q}_l(1)) \to \mathbb{Q}_l$$ is a perfect pairing.
\end{thrm10}

\section{Proof of the Weil conjectures except for the Riemann hypothesis}

We will now prove all the conjectures (as stated in Subsection 1.1) except for the Riemann hypothesis (IV) and restate (IV). We will assume the Riemann hypothesis to show that $P_i(X_0, t)$ are integral.

\textbf{(I) Rationality} From Remark 2 (Subsection 2.5) we see that it suffices to prove that:

\textbf{(a)} $\mathbb{Z}[[t]] \cap \mathbb{Q}_l(t) \subset \mathbb{Q}(t)$

\textbf{(b)} $P_i(X_0, t)$ are integral polynomials. 

The proof of (a) is a good exercise on \textit{Hankel determinants} (see a more general statement in Bourbaki, Algebra, IV.5, Exercise 3). We sketch the argument here. Let $f(t) \in \mathbb{Z}[[t]] \cap \mathbb{Q}_l(t)$. Since $f(t)=\sum_n a_n t^n \in \mathbb{Z}[[t]]$ lies in $\mathbb{Q}_l(t)$, we see that its image in $\mathbb{Q}_l[[t]]$ is a Taylor expansion of a rational function. This implies that for all $M$ and $N$ large enough the Hankel determinants $$H_k=\det((a_{i+j+k})_{0 \leq i, j \leq M}) \ (k > N)$$ turn to zero. They turn to zero in $\mathbb{Q}_l$ if and only if they turn to zero in $\mathbb{Q}$ but this implies $f(t) \in \mathbb{Q}(t)$. 

We will now show (b) modulo the Riemann hypothesis. 

\begin{lemma}
Let $l$ be a prime number and $f(t)=\frac{g(t)}{h(t)}$ where $f(t) \in 1+t \cdot  \mathbb{Z}_l[[t]]$ and $g(t), h(t) \in 1 + t \cdot  \mathbb{Q}_l[[t]]$ are relatively prime. Then $g(t), h(t)$ have coefficients in $\mathbb{Z}_l$. 
\end{lemma}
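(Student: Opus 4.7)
The plan is to exploit the analytic characterization of $\mathbb{Z}_l[[t]]$: a power series $f \in 1 + t\,\mathbb{Q}_l[[t]]$ lies in $\mathbb{Z}_l[[t]]$ iff its coefficients are $l$-adically bounded by $1$, which is equivalent (by the ultrametric inequality) to $|f(t)|_l \leq 1$ on the open $l$-adic unit disk $D = \{|t|_l < 1\}$ of $\overline{\mathbb{Q}_l}$ (or its completion). In particular, such an $f$ cannot have a genuine pole inside $D$.

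Taking $g,h$ to be polynomials (the case relevant to the application to the $P_i(X_0, t)$), I would first factor $h(t) = \prod_j (1 - \beta_j t)$ over $\overline{\mathbb{Q}_l}$. The heart of the argument is showing $|\beta_j|_l \leq 1$ for every root. Suppose some $|\beta_j|_l > 1$; then $t_0 := 1/\beta_j$ satisfies $|t_0|_l < 1$, so $t_0 \in D$. Since $g$ and $h$ are coprime we have $g(t_0) \neq 0$, and the identity $f \cdot h = g$ forces $|f(t)|_l \to \infty$ as $t \to t_0$, contradicting $|f(t)|_l \leq 1$ on $D$. Hence all $|\beta_j|_l \leq 1$. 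The coefficients of $h$ are, up to sign, elementary symmetric polynomials in the $\beta_j$, so they are integral over $\mathbb{Z}_l$; being already in $\mathbb{Q}_l$, they lie in $\mathbb{Z}_l$. Thus $h \in \mathbb{Z}_l[t]$.

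Once $h \in \mathbb{Z}_l[t]$, the product $g = f \cdot h$ lies in $\mathbb{Z}_l[[t]]$ (closed under multiplication), and since $g$ is a $\mathbb{Q}_l$-polynomial we obtain $g \in \mathbb{Q}_l[t] \cap \mathbb{Z}_l[[t]] = \mathbb{Z}_l[t]$, completing the argument.

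The main obstacle will be rigorously justifying the "pole forces unboundedness" step: this is standard non-archimedean analysis but must either be invoked or replaced by a purely algebraic substitute. A Gauss's-lemma alternative runs as follows: clear denominators to write $g = l^{-a} g_0$, $h = l^{-b} h_0$ with $g_0, h_0 \in \mathbb{Z}_l[t]$ primitive, and inspect the identity $h_0 \cdot f = l^{a-b} g_0$ in $\mathbb{Z}_l[[t]]$ to force $a = b = 0$ via coprimality. This route avoids rigid analysis but is more delicate since coprimality in $\mathbb{Q}_l[t]$ does not automatically descend modulo $l$; the analytic route I propose above is conceptually cleaner and directly keyed to the valuations of the $\beta_j$, which is exactly the information the Riemann hypothesis will provide.
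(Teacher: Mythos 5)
Your proposal is essentially the paper's proof, with one small detour you can avoid. The paper also factors $h(t)=\prod_i(1-c_it)$ (over a finite extension of $\mathbb{Q}_l$), assumes some $|c_i|_l>1$ so that $t_0=c_i^{-1}$ lies in the unit disk, and then reaches a contradiction by evaluating. But instead of your slightly awkward ``$|f(t)|_l\to\infty$ as $t\to t_0$'' phrasing (which, as you note, needs an analytic lemma), it simply observes that the power series $f(t_0)$ \emph{converges} (coefficients in $\mathbb{Z}_l$, $|t_0|_l<1$), so the formal identity $f\cdot h = g$ can be evaluated at $t_0$ to give $f(t_0)\,h(t_0)=g(t_0)$; since $h(t_0)=0$ while $g(t_0)\neq 0$ by coprimality, this is an immediate contradiction with no limiting argument required. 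That one substitution is exactly the ``purely algebraic substitute'' you were looking for. For the numerator, the paper applies the same argument to $f^{-1}\in 1+t\,\mathbb{Z}_l[[t]]$, whereas you deduce $g=f\cdot h\in\mathbb{Z}_l[[t]]\cap\mathbb{Q}_l[t]=\mathbb{Z}_l[t]$; both are correct, and your route is if anything a touch more direct. The Gauss's-lemma alternative you sketch at the end is not needed and, as you suspect, is harder to push through; drop it.
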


\begin{proof}
It is enough to show that the coefficients of $g(t)$ and $h(t)$ have $l$-adic absolute values $\leq 1$. 

After taking a finite extension of $\mathbb{Q}_l$ we may assume that $h(t)$ splits. Let $h(t)=\prod (1-c_i t)$ and choose an arbitrary $i$. We need to show that $|c_i|_l \leq 1$. Indeed, suppose that $|c_i|_l > 1$ . Then we have $|c_i^{-1}|_l < 1$ and the power series $f(c_i^{-1})$ converges. But then $$f(t)h(t)=g(t) \Rightarrow f(c_i^{-1}) h(c_i^{-1})=g(c_i^{-1}).$$ This is a contradiction since $h(c_i^{-1})=0 \neq g(c_i^{-1})$. So we have $|c_i|_l \leq 1$ for all $i$ and $h(t) \in \mathbb{Z}_l[t]$. 

To show that $g(t) \in \mathbb{Z}_l[t]$ we apply the same argument to $f(t)^{-1} \in 1+t \cdot \mathbb{Z}_l[[t]]$. 
\end{proof}

\begin{lemma}
Let $Z(X_0, t)=\frac{P(t)}{Q(t)}$, with $P(t), Q(t) \in \mathbb{Q}[t]$ relatively prime (they exist by (a)). If they are chosen to have constant term $1$ then they have coefficients in $\mathbb{Z}$. 
\end{lemma}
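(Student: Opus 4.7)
The plan is to prove this by reducing to the previous lemma, applied separately at each prime $l$, and then using the local-global principle $\mathbb{Z} = \mathbb{Q} \cap \bigcap_l \mathbb{Z}_l$. The starting observation is that from the Euler product representation $Z(X_0,t) = \prod_{x \in |X_0|} (1-t^{\deg(x)})^{-1}$ established in Subsection 1.2, one sees directly that $Z(X_0,t) \in 1 + t\cdot\mathbb{Z}[[t]]$, since each factor is a formal power series with integer coefficients and constant term one, and the product makes sense in $\mathbb{Z}[[t]]$ (only finitely many factors contribute to each coefficient because the number of closed points of given degree is finite).

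Fix now an arbitrary prime $l$. Then $Z(X_0,t) \in 1 + t\cdot\mathbb{Z}_l[[t]]$, and we have the equality $Z(X_0,t) = P(t)/Q(t)$ with $P,Q \in 1 + t\cdot\mathbb{Q}_l[[t]]$. The hypothesis that $P$ and $Q$ are relatively prime in $\mathbb{Q}[t]$ gives, by Bezout in the PID $\mathbb{Q}[t]$, polynomials $A,B \in \mathbb{Q}[t]$ with $AP + BQ = 1$; this identity persists in $\mathbb{Q}_l[t]$, so $P$ and $Q$ remain coprime there. Hence the previous lemma applies and yields $P(t), Q(t) \in \mathbb{Z}_l[t]$.

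Since this argument works for every prime $l$, every coefficient of $P$ and of $Q$ is a rational number whose $l$-adic absolute value is at most $1$ for all $l$; such a rational number is necessarily an integer. We therefore conclude $P(t), Q(t) \in \mathbb{Z}[t]$, as required. The only non-routine point is the preservation of coprimality when passing from $\mathbb{Q}[t]$ to $\mathbb{Q}_l[t]$, which is handled by the Bezout identity above; everything else is a straightforward application of the previous lemma together with the elementary local-global description of $\mathbb{Z}$ inside $\mathbb{Q}$.
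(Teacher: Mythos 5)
Your proof is correct and follows essentially the same strategy as the paper's: apply the preceding lemma at each prime $l$ and conclude via the local-global principle $\mathbb{Z} = \mathbb{Q} \cap \bigcap_l \mathbb{Z}_l$. You helpfully make explicit two points the paper glosses over---the Bezout argument showing coprimality in $\mathbb{Q}[t]$ persists in $\mathbb{Q}_l[t]$, and the verification from the Euler product that $Z(X_0,t) \in 1 + t\cdot\mathbb{Z}[[t]]$.
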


\begin{proof}
According to the previous lemma the coefficients of $P(t)$ and $Q(t)$ will be $l$-adic integers for all prime $l$ (including $l=p$). This means that they are integers. 
\end{proof}

Let $P_i (X_0, t)=\det (1-F^*t, H^i (X, \mathbb{Q}_l))$. Then the Riemann hypothesis implies that all the $P_i(X_0, t)$ are relatively prime. Hence the fraction in Lemma 4 is irreducible and we have $$ P(t)=\prod_{i \ odd} P_i(X_0, t), \ Q(t)=\prod_{i \ even} P_i(X_0, t).$$ 
Therefore, inverse roots of $P_i(X_0, t)$ are algebraic integers which implies that $P_i(X_0, t)$ has integer coefficients.
\begin{remark}
Let $K$ be the subfield of the algebraic closure $\mathbb{\bar Q}_l$ of $\mathbb{Q}_l$ generated over $\mathbb{Q}$ by the roots of $R(t)=P(t)Q(t)$. By the Riemann hypothesis, the roots of $P_i(t)$ are the roots of $R(t)$ such that all their complex conjugates have absolute value $q^{-\frac{i}{2}}$. This description of the roots of $P_i(X_0, t)$ is independent of $l$, so $P_i(X_0, t)$ are also independent of $l$. 
\end{remark}

\textbf{(II) Functional equation}
Recall that $d$ is the dimension of $X$ and consider the duality pairing $$H^{2d-i}(X, \mathbb{Q}_l) \otimes H^i (X, \mathbb{Q}_l(d)) \to H^{2d}(X, \mathbb{Q}_l) \overset{\eta_X} \to \mathbb{Q}_l.$$
The Frobenius endomorphism $F: X \to X$ induces the maps $F^*$ and the Gysin maps $F_*$ (see Subsection 2.6), all of which are endomorphisms. Then the following holds by the definition of $F_*$ (and is equivalent to it): $$\eta_X(F_* (x) \cup x')=\eta_X(x \cup F^*(x')), \ x \in H^{2d-i}(X, \mathbb{Q}_l), \ x' \in H^i (X, \mathbb{Q}_l(d)).$$ Therefore, we see that the eigenvalues of $F^*$ acting on $H^i (X, \mathbb{Q}_l(d))$ are the same as the eigenvalues of $F_*$ acting on $H^{2d-i}(X, \mathbb{Q}_l)$. But $F$ is finite of deqree $q^d$ so we have $F^*=q^d/F_*$. This implies that if $\alpha_1, \cdots, \alpha_s$ are the eigenvalues of $F^*$ acting on $H^i(X, \mathbb{Q}_l)$ then $q^d/\alpha_1, \cdots, q^d/\alpha_s$ are the eigenvalues of $F^*$ acting on $H^{2d-i}(X, \mathbb{Q}_l)$. 

The rest of the proof is a direct calculation. We have (in the Galois language): $$\det(1-F q^{-d} t^{-1} , H^i(X, \mathbb{Q}_l))=(q^d t)^{-\beta_i} \det(F, H^i(X, \mathbb{Q}_l)) \cdot (-1)^{\beta_i} \det(1-Ft, H^{2d-i}(X, 
\mathbb{Q}_l)).$$ (where $\beta_i$ is the dimension of the $i$-th cohomology group). 

Suppose that $i \neq d$. Then $$\det(F, H^i(X, \mathbb{Q}_l)) \cdot \det(F, H^{2d-i}(X, \mathbb{Q}_l))=q^{\beta_i d}.$$

Consider the case $i=d$ and let $N_{+}$ (resp. $N_{-}$) be the number of the eigenvalues of $F$ acting on $H^d(X, \mathbb{Q}_l)$ equal to $q^{\frac{d}{2}}$ (resp. $-q^{\frac{d}{2}}$). The remaining eigenvalues form pairs $\alpha \neq q^d/\alpha$ so $\beta_d-N_{+}-N_{-}$ is even. Then we have $$\det(F, H^d(X, \mathbb{Q}_l))=q^{d(\beta_q-N_{+}-N_{-})/2}q^{d(N_{+}+N_{-})/2}(-1)^{N_{-}}=q^{d\beta_d/2}(-1)^{N_{-}},$$ where $d\beta_d$ is always even since the Poincare duality pairing is skew-symmetric on $H^d(X, \mathbb{Q}_l)$ for odd $d$. 

We use the cohomological interpretation of the zeta function to obtain the functional equation:
$$Z(X_0, q^{-d}t^{-1})=\prod_{i=0}^{2d} \det(1-F q^{-d}t^{-1}, H^i(X, \mathbb{Q}_l))^{(-1)^{i+1}}=$$ $$=(q^d t)^{\chi} q^{\frac{-d \chi }{2}} (-1)^{N_{+}}\prod_{i=0}^{2d} \det(1-F t, H^{2d-i}(X, \mathbb{Q}_l))^{(-1)^{i+1}}=(-1)^{N_{+}} q^{\frac{d \chi}{2}} t^{\chi} Z(X_0, t),$$ where $\chi=\sum_i (-1)^i \beta_i$. 

\textbf{(III) Betti numbers} 

This immediately follows from the Lefschetz trace formula (e) (for $\varphi=Id$) and the Lifting to characteristic 0 (g) properties of $l$-adic cohomology (see Subsection 2.4). 

\textbf{(IV) Riemann hypothesis} 

In view of the cohomological interpretation for the zeta function we can reformulate the Riemann hypothesis. Sections 4-8 are devoted to the proof of the following theorem (and are similar to parapraphs 3-7 of [7]).
\begin{thrm11}
Let $X_0$ be a nonsingular projective variety over the finite field $\mathbb{F}_q$ of characteristic $p$. For each $i$ and each $l \neq p$ the eigenvalues of $F^*$ acting on $H^i(X, \mathbb{Q}_l)$ are algebraic numbers of absolute value $|\alpha|=q^{\frac{i}{2}}$.
\end{thrm11}

I am going to use the shorthands $DT$ for the whole Deligne's theorem, $DT(i)$ for its restriction to the eigenvalues of $F^*$ acting on a single cohomology group $H^i(X, \mathbb{Q}_l)$ and $DT(V)$ for the restriction to the eigenvalues of $F^*$ acting on a subspace $V \subset H^i (X, \mathbb{Q}_l)$ stable under $F^*$.

\begin{remark}
Actually, we can say that $\alpha$-s are algebraic numbers all of which complex conjugates (roots of $P(t) \in \mathbb{Q}[t]$ in $\mathbb{C}$, where $P(t)$ is a monic and irreducible polynomial such that $P(\alpha)=0$) have absolute value $q^{\frac{i}{2}}$. It is actually quite unusual for an algebraic number to have all the complex conjugates with the same absolute value (for instance, $1+\sqrt{2}$ does not have this property). 
\end{remark}

I would like to conclude this section by informally outlining the steps of the proof.

In view of the Kunneth isomorphism (property (c) of the $l$-adic cohomology of Subsection 2.4) and the fact that this isomorphism is compatible with the action of $F^*$ one can argue that if the theorem is true for $X_0$ and $Y_0$ then it should also be true for the product $X_0 \times Y_0$ and if every variety can be represented as a product of curves then we can argue by dimension and call it a day. 

Unfortunately (or, depending on your perspective, very fortunately), this is not the case but Deligne takes a cue from this and (once the reductions of Section 4 are made) argues by induction (though in the end we will not have to refer to the known result for curves). The induction works by taking a map $f: X \to \mathbb{P}^1$ for $X$ (rather for $\tilde X$ such that it suffices to prove the theorem for $\tilde X$) that looks similar to the first projection in the case of $X \simeq \mathbb{P}_1 \times Y$ and fails (in a certain sense) only at a finite number of points (this is where Lefschetz theory of Section 6 comes of use). By the use of the Leray spectral sequence (see Subsection 2.3) one relates the cohomology of $\mathbb{P}_1$ with that of $X$ and thus makes a reduction to the last irreducible case where the ideas acquired by Deligne while reading the paper of Rankin (see Section 5) allow him to conclude the proof. 

It might be useful to reread the above paragraph after studying Sections 4-7 and before the line of reasoning described above is implemented in Section 8.

\section{Preliminary reductions}

I will first make a few obvious remarks. 

Let $X_0, Y_0$ be nonsingular projective varieties over $\mathbb{F}_q$ and $X, Y$ be the corresponding varieties over $\mathbb{\bar F}_q$. It is easy to see that:

(i) If $H^i(Y, \mathbb{Q}_l) \to H^i(X, \mathbb{Q}_l)$ is a surjection, then $DT(i)$ for $Y$ $\Rightarrow$ $DT(i)$ for $X$. 

(ii) If $H^i(X, \mathbb{Q}_l) \xhookrightarrow{} H^i(Y, \mathbb{Q}_l)$ is an embedding, then $DT(i)$ for $Y$ $\Rightarrow$ $DT(i)$ for $X$. 

More generally, one can prove the following statement:
\begin{fact}
Given two subspaces $V \subset H^i (X, \mathbb{Q}_l)$ and $W \subset H^j(Y, \mathbb{Q}_l)$ stable under $F^*$ and a $\mathbb{Q}_l$-linear map $\varphi: V \to W$ such that for all $v \in V$ we have $$\varphi(F^* v)=q^{(i-j)/2}F^*\varphi(v)$$ (that is our map is compatible with $F^*$), the following hold:

(i) If $\varphi$ is surjective, then $DT(V)$ for $X$ $\Rightarrow$ $DT(W)$ for $Y$. 

(ii) If $\varphi$ is injective, then $DT(W)$ for $Y$ $\Rightarrow$ $DT(V)$ for $X$.

\end{fact}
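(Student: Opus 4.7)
The plan is to reduce the statement to essentially linear algebra by rewriting the compatibility condition so that $\varphi$ becomes equivariant for a rescaled Frobenius action, then reading off the eigenvalues.

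First, I would rewrite the hypothesis $\varphi(F^*v) = q^{(i-j)/2}F^*\varphi(v)$ as
\[
F^*|_W \circ \varphi \;=\; \varphi \circ A', \qquad A' := q^{(j-i)/2} F^*|_V,
\]
so that $\varphi : V \to W$ intertwines the modified operator $A'$ on $V$ with the ordinary Frobenius $B := F^*|_W$ on $W$. The eigenvalues of $A'$ are precisely the eigenvalues of $F^*|_V$ multiplied by $q^{(j-i)/2}$; equivalently, an eigenvalue $\alpha$ of $F^*|_V$ satisfies $|\alpha| = q^{i/2}$ iff the corresponding eigenvalue $q^{(j-i)/2}\alpha$ of $A'$ has absolute value $q^{j/2}$. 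In the same way, algebraic integrality is preserved by this scaling up to multiplication by a power of $q$, so the algebraicity clause of $DT$ is transported automatically; the content is the archimedean condition on absolute values.

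Next I would handle (i). Assume $\varphi$ is surjective. From $B \varphi = \varphi A'$ and $\varphi(v)=0 \Rightarrow \varphi(A'v) = B\varphi(v) = 0$, the kernel $\ker\varphi$ is $A'$-stable, so $A'$ descends to an operator $\bar A'$ on $V/\ker\varphi$, which via $\varphi$ is identified with $B$ on $W$. Thus $\det(1 - Bt, W) = \det(1 - \bar A' t, V/\ker\varphi)$ divides $\det(1 - A' t, V)$, so the eigenvalues of $B$ form a subset (with multiplicity) of those of $A'$. Assuming $DT(V)$, every eigenvalue of $F^*|_V$ has absolute value $q^{i/2}$, hence every eigenvalue of $A'$ has absolute value $q^{j/2}$, and therefore every eigenvalue of $B = F^*|_W$ does too. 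This is $DT(W)$.

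For (ii), assume $\varphi$ is injective. Then $\varphi(V) \subset W$ is $B$-stable (since $B\varphi(v) = \varphi(A'v) \in \varphi(V)$), and $\varphi$ gives an isomorphism of $(V, A')$ with $(\varphi(V), B|_{\varphi(V)})$. Hence $\det(1 - A' t, V) = \det(1 - B t, \varphi(V))$ divides $\det(1 - B t, W)$, so every eigenvalue of $A'$ is an eigenvalue of $B = F^*|_W$. Under $DT(W)$, all such eigenvalues have absolute value $q^{j/2}$, which after dividing by $q^{(j-i)/2}$ gives that all eigenvalues of $F^*|_V$ have absolute value $q^{i/2}$, i.e.\ $DT(V)$. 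There is no genuine obstacle here; the only point to be careful about is keeping the direction of the twist $q^{(j-i)/2}$ versus $q^{(i-j)/2}$ straight, and noting that surjectivity/injectivity gives divisibility of characteristic polynomials in opposite directions, which is exactly what matches the two implications.
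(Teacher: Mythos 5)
Your proof is correct. The paper states this proposition without proof, introducing it only with ``More generally, one can prove the following statement'' and treating it as a routine extension of the two observations just above; the argument you give --- absorbing the $q^{(i-j)/2}$ factor into a rescaled operator $A'$ so that $\varphi$ intertwines $A'$ with $F^*|_W$, then reading off the eigenvalue containment from the divisibility of characteristic polynomials via the stable kernel (surjective case) or stable image (injective case) --- is exactly the intended verification.
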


We shall also need the following:
\begin{fact}
It suffices to prove Deligne's theorem for $\mathbb{F}_q$ replaced by a finite extension $\mathbb{F}_{q^n}$. 
\end{fact}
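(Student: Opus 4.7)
The plan is to exploit the base-change relationship between the Frobenius endomorphism over $\mathbb{F}_q$ and that over $\mathbb{F}_{q^n}$. Suppose the theorem is known after some finite extension of scalars, and let me deduce it for $X_0/\mathbb{F}_q$.

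First, I would set $X_0' := X_0 \times_{\mathbb{F}_q} \mathbb{F}_{q^n}$. This is again a nonsingular projective variety (these properties are preserved by base change along a field extension). Because $\overline{\mathbb{F}}_q = \overline{\mathbb{F}}_{q^n}$, extending scalars further yields exactly the same $\overline{\mathbb{F}}_q$-variety $X$, so the $l$-adic cohomology groups $H^i(X,\mathbb{Q}_l)$ are literally the same objects in both settings.

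Second, I would compare the two Frobenius endomorphisms acting on $X$: the Frobenius relative to $X_0$ raises coordinates to the $q$-th power, while the Frobenius relative to $X_0'$ raises them to the $q^n$-th power. Hence $F_{X_0'} = F_{X_0}^{\,n}$ as endomorphisms of $X$, and consequently $(F_{X_0'})^{*} = (F_{X_0}^{*})^{n}$ on $H^i(X,\mathbb{Q}_l)$. Therefore, if $\alpha_1,\dots,\alpha_s$ are the eigenvalues of $F^{*} = F_{X_0}^{*}$, then $\alpha_1^{n},\dots,\alpha_s^{n}$ are exactly the eigenvalues of $(F_{X_0'})^{*}$.

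Third, assuming DT holds for $X_0'$ over $\mathbb{F}_{q^n}$, each $\alpha_j^{n}$ is an algebraic number all of whose complex conjugates have absolute value $(q^n)^{i/2}=q^{ni/2}$. Then each $\alpha_j$ is itself algebraic (any root of $t^n - \alpha_j^n$ is algebraic once $\alpha_j^n$ is), and for any embedding $\sigma$ into $\mathbb{C}$ we have $\sigma(\alpha_j)^{n}=\sigma(\alpha_j^{n})$, so $|\sigma(\alpha_j)|^{n}=q^{ni/2}$, yielding $|\sigma(\alpha_j)|=q^{i/2}$ after taking the positive real $n$-th root. This is the statement of DT$(i)$ for $X_0$. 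In view of Remark 4 (where the eigenvalues are asserted to have this property for \emph{all} their complex conjugates), it is important that the argument applies uniformly to every Galois conjugate of $\alpha_j$, which it does because the Galois action commutes with the $n$-th power map.

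There is no real obstacle here: the whole reduction is a routine manipulation once one identifies $F_{X_0'}$ with $F_{X_0}^{\,n}$. The only minor point to watch is that passage to $n$-th roots preserves both algebraicity and the uniform-absolute-value condition on all conjugates; the former is trivial, and the latter holds because Galois conjugation and the $n$-th power commute.
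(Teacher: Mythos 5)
Your proposal is correct and takes essentially the same route as the paper: both observe that the Frobenius relative to $\mathbb{F}_{q^n}$ is $F^n$, hence eigenvalues become $n$-th powers, and taking $n$-th roots of the absolute-value condition recovers DT over $\mathbb{F}_q$. Your version is merely more explicit about the base-change of the variety and about why the all-complex-conjugates condition descends (Galois conjugation commutes with the $n$-th power), which the paper leaves implicit.
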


\begin{proof}
The Frobenius map $F_n: X \to X$ relative to $\mathbb{F}_{q^n}$ is $F^n$ so if $\alpha$ is an eigenvalue of $F$ on $H^i (X, \mathbb{Q}_l)$ then $\alpha^n$ is an eigenvalue of $F_n$ on $H^i (X, \mathbb{Q}_l)$. Therefore, $\alpha^n$ satisfies the condition of $DT$ relative to $q^n$ if and only if $\alpha$ satisfies the condition of $DT$ relative to $q$. 
\end{proof}

We will sometimes use the results above without reference. Let's turn to making the reductions. 
\begin{reduction1}
$DT(d)$ for all absolutely (geometrically) irreducible nonsingular projective varieties of dimension $d$ implies $DT$. 
\end{reduction1}
That is, we only need to worry about the middle-dimensional cohomology groups. 
\begin{proof}
We will need the following results:

\textbf{(a)} If $X_0$ is purely of dimension $d$, $DT(i)$ for $X_0$ is equivalent to $DT(2d-i)$ for $X_0$.

This follows from the fact that if $\alpha$ is an eigenvalue of $F^*$ acting on $H^i(X, \mathbb{Q}_l)$ then $q^d/\alpha$ is an eigenvalue of $F^*$ acting on $H^{2d-i}(X, \mathbb{Q}_l)$. We have shown that this follows from Poincare duality in our proof of the Functional equation conjecture (see Section 3). 

\textbf{(b)} If $X_0$ is a union of irreducible $\{ X_0^{\alpha} \}_{\alpha \in I}$, $DT$ for $X_0$ is equivalent to the collection of $DT$ for all $X_0^{\alpha}, \ \alpha \in I$.

This is obvious since the cohomology groups split as direct products. 

\textbf{(c)} If $X_0$ is purely of dimension $d$, $Y_0$ is a smooth hyperplane section of $X_0$ and $i < d$, then $W(Y_0, i) \Rightarrow W(X_0, i)$.

This follows from the weak Lefschetz theorem (see Subsection 2.4) together with the fact that the map $H^{i-2}(Y, \mathbb{Q}_l)(-1) \to H^i (X, \mathbb{Q}_l)$ is compatible with $F^*$.

Reduce the general case as follows (we can assume that $d \geq 2$): 

-by (b) we can assume that $X_0$ is purely of dimension $d$.

-by (a) we can also assume that $i \geq d$.

-by (c), Proposition 5 and Bertini's hyperplane theorem\footnote{It states the existence of $Y_0$ in (c).} (Harthorne, II.8.18) we can further assume that $i=d$.

-by (b) and Proposition 5 we can finally assume that $X_0$ is absolutely irreducible. 

This concludes the proof. 
\end{proof}

\begin{remark}
As noted by A.Mellit, it is possible to avoid using the weak Lefschetz theorem here by applying a Kunneth formula argument. We know how the Frobenius endomorphism acts on $H^0(X, \mathbb{Q}_l)$ and $H^{2d}(X, \mathbb{Q}_l)$. Suppose that $d \leq i$. Then if we tensor $H^i(X, \mathbb{Q}_l)$ by itself $d$ times and $H^0(X, \mathbb{Q}_l)$ by itself $i-d$ times then the whole product lands in $H^{id}(X^{id}, \mathbb{Q}_l)$ (the middle-dimensional cohomology group).
Similarly for $d >i$.
\end{remark}

Consider the following statement.

\begin{dt}
Let $X_0$ be a nonsingular absolutely irreducible projective variety of even dimension $d$ over $\mathbb{F}_q$, $X$ the corresponding variety over $\mathbb{\bar F}_q$ and $\alpha$ an eigenvalue of $F^*$ acting on $H^d(X, \mathbb{Q}_l)$. Then $\alpha$ is an algebraic number all of which complex conjugates, still denoted $\alpha$, satisfy $$q^{\frac{d}{2}-\frac{1}{2}} \leq |\alpha| \leq q^{\frac{d}{2}+\frac{1}{2}}.$$ 
\end{dt}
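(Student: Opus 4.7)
The plan is to combine a Lefschetz pencil with the Main Lemma of Section 5 (Rankin's method applied to $L$-functions of locally constant $\mathbb{Q}_l$-sheaves on a curve) in order to reduce the bound on $H^d(X,\mathbb{Q}_l)$ to an archimedean estimate on a one-dimensional cohomology group on $\mathbb{P}^1$, which the Main Lemma will handle.

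First, by Poincare duality on $H^d$ (as already used in part (a) of Reduction 1), eigenvalues of $F^*$ come in pairs $(\alpha, q^d/\alpha)$, so it suffices to establish the upper bound $|\alpha|\le q^{(d+1)/2}$. Next, embed $X_0$ projectively via a sufficiently ample line bundle and choose a generic pencil of hyperplane sections; after blowing up the base locus one obtains $\tilde X_0$ together with a map $f_0:\tilde X_0\to\mathbb{P}^1_{\mathbb{F}_q}$ whose geometric fibres are smooth hyperplane sections of $X$ of dimension $d-1$ except at finitely many critical fibres (this is the content of Section 6). The cohomology of $\tilde X$ differs from that of $X$ only by contributions from the (lower-dimensional) base locus and a projective bundle, all of which are controlled by the inductive hypothesis on dimension; hence it is enough to prove the bound for $\tilde X$.

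Now apply the Leray spectral sequence to $f$: the contributions to $H^d(\tilde X,\mathbb{Q}_l)$ are of the form $H^p(\mathbb{P}^1, R^q f_*\mathbb{Q}_l)$ with $p+q=d$ and $p\in\{0,1,2\}$. The outer pieces $p=0$ and $p=2$ involve the sheaves $R^d f_*\mathbb{Q}_l$ and $R^{d-2}f_*\mathbb{Q}_l$, whose generic stalks are cohomology groups of smooth $(d-1)$-dimensional hyperplane sections of $X$; Deligne's theorem applies here by induction on dimension and yields estimates strictly sharper than needed. The real work therefore lies in the middle piece $H^1(\mathbb{P}^1, R^{d-1}f_*\mathbb{Q}_l)$. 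Let $U_0\subset\mathbb{P}^1$ be the smooth locus of $f_0$, $j:U\hookrightarrow\mathbb{P}^1$ the inclusion, and set $\mathcal F=(R^{d-1}f_*\mathbb{Q}_l)|_U$. Decompose $\mathcal F=\mathcal E\oplus\mathcal F'$, with $\mathcal F'$ the monodromy-invariant subsheaf and $\mathcal E$ the variable (vanishing-cycle) part. The invariant part $\mathcal F'$ is dominated by the cohomology of a single smooth hyperplane section, hence is again handled by induction, so everything reduces to an eigenvalue bound on $H^1(\mathbb{P}^1, j_*\mathcal E)$. Via the third form of Poincare duality for curves (Subsection 2.6) this is equivalent to bounding the reciprocal roots of the $L$-function $Z(U_0,\mathcal E_0,t)$.

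This last bound is exactly the role of the Main Lemma of Section 5. Using Rankin's trick one studies the even tensor powers $\mathcal E_0^{\otimes 2k}$: because $\mathcal E$ is essentially self-dual (via the cup-product on vanishing cycles), the local Euler factors of $Z(U_0,\mathcal E_0^{\otimes 2k},t)$ can be shown to have non-negative power-series coefficients, forcing convergence of the global $L$-function in a suitable half-plane; translating convergence into a polar bound and then letting $k\to\infty$ yields the desired estimate $|\alpha|\le q^{(d+1)/2}$. The main obstacle is precisely this final analytic step: it depends on knowing that the global monodromy representation on $\mathcal E$ is sufficiently large (an open subgroup of the relevant symplectic or orthogonal group, which is the role of the Kajdan-Margulis theorem cited in Section 6), so that the tensor powers $\mathcal E^{\otimes 2k}$ decompose into pieces whose Frobenius traces are non-negative and the positivity argument underlying Rankin's estimate can be applied. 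Once this structural input is in place, the rest is the formal combination of Lefschetz pencils, the Leray spectral sequence, and Poincare duality developed in Sections 2 and 6.
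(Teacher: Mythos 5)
Your proposal tracks the overall architecture of Deligne's argument (Lefschetz pencil $\Rightarrow$ Leray spectral sequence $\Rightarrow$ Main Lemma on $H^1(\mathbb{P}^1, j_*\mathcal E)$, with the $p=0,2$ pieces handled by induction), but there are two genuine gaps.

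The first is circularity in the decomposition step. You assert $\mathcal F=\mathcal E\oplus\mathcal F'$ with $\mathcal F'$ the invariant part and $\mathcal E$ the vanishing part. Since $\mathcal E^\perp$ is exactly the invariant subsheaf (Proposition 9(a)(3)), this direct sum is equivalent to $\mathcal E\cap\mathcal E^\perp=0$, i.e. to non-degeneracy of the cup-product form on $\mathcal E$ — which is a form of the hard Lefschetz theorem. But hard Lefschetz is precisely what Deligne \emph{derives} as a consequence of the Riemann hypothesis; it cannot be assumed here (this is pointed out explicitly in Remark~9 at the end of Section~8). The actual proof avoids the direct-sum decomposition by working with the two-step filtration $j_*(\mathcal E\cap\mathcal E^\perp)\subset j_*\mathcal E\subset R^nf_*\mathbb{Q}_l$, and then runs a case analysis on whether the vanishing cycles are zero and whether $\delta\in E\cap E^\perp$; the eigenvalue bounds are extracted from the resulting long exact sequences together with Corollary~4. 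Your argument as written would prove a statement from itself.

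The second gap is that the Main Lemma has three hypotheses, and you only address two of them. You invoke the symplectic pairing (hypothesis (a)) and Kajdan--Margulis for big geometric monodromy (hypothesis (b)), but you never establish the rationality hypothesis (c), namely that $\det(1-F_x t,\mathcal E_0/(\mathcal E_0\cap\mathcal E_0^\perp))$ has $\mathbb{Q}$-coefficients for every closed point $x$. This is not automatic: it is the entire content of the Rationality Theorem of Section~7, which requires a delicate Chebotarev-type argument (Proposition~10, Lemmas 13--14) to separate the ``big-monodromy'' factor of the fibrewise zeta function from the ``small-monodromy'' factors. Without (c) the positivity argument behind Rankin's method does not get off the ground, since the local Euler factors are not known to have non-negative real coefficients. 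Two smaller imprecisions: the form on $\mathcal E/(\mathcal E\cap\mathcal E^\perp)$ is symplectic (not ``symplectic or orthogonal'') because $n=d-1$ is odd; and for the $p=0,2$ terms the induction does not apply directly to $H^{n\pm1}(X_u,\mathbb{Q}_l)$ (which is not middle-dimensional cohomology of an even-dimensional variety) — one must pass through a further smooth hyperplane section $Y_0\subset X_{u_0}$ of dimension $d-2$ via weak Lefschetz/Gysin, as in cases (A) and (B) of the paper.
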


\begin{reduction2}
$DR$ implies $DT$.
\end{reduction2}

\begin{proof}
By the first reduction it suffices to show that $DR$ implies $DT(d)$ for absolutely irreducible varieties. Let $X_0$ be a nonsingular absolutely irreducible projective variety of dimension $d$ (not necessarily even), $\alpha$ be an eigenvalue of $F^*$ on $H^d(X, \mathbb{Q}_l)$ and $k$ be an even integer.  Then Kunneth isomorphism implies that $\alpha^k$ is an eigenvalue of $F^*$ acting on $H^{kd}(X^k, \mathbb{Q}_l)$. By $DR$ we have:  $$q^{\frac{kd}{2}-\frac{1}{2}} \leq |\alpha^k| \leq q^{\frac{kd}{2}+\frac{1}{2}}$$ and $$q^{\frac{d}{2}-\frac{1}{2k}} \leq |\alpha| \leq q^{\frac{d}{2}+\frac{1}{2k}}.$$ Letting $k$ go to infinity, we find that $$|\alpha|=q^{\frac{d}{2}}.$$  
\end{proof}

We are going to prove $DR$ in Section 8 and thus conclude the proof of $DT$.

\section{The Main Lemma}

In this section we discuss the technical heart of the proof. These are the ideas commonly referred to as the Main Lemma in English literature (french. la majoration fondamentale). Deligne was able to deduce the results of this section after reading the lectures of Rankin [20] while studying automorphic forms. The main idea of Rankin (using the result of Landau) was that when one has a product defining the zeta function, one can get information on the \textit{local factors} (we will soon define them in Lemma 6) from the information on the pole of the zeta function itself. It occurred to Deligne to try a similar approach in a different situation, which eventually lead him to the complete proof of the Riemann hypothesis. 

We will now state the main result of this section. Let $U_0$ be a curve over $\mathbb{F}_q$, $U$ the corresponding curve over $\mathbb{\bar F}_q$, $u$ a closed point of $U$, $\mathcal{F}_0$ a locally constant $\mathbb{Q}_l$-sheaf on $U_0$ and $\mathcal{F}$ its inverse image on $U$. 

Let $\beta \in \mathbb{Z}$ \footnote{One may want to define weights for $\beta \in \mathbb{Q}$ but we will not need this.}. We will say that $\mathcal{F}_0$ has \textit{weight} $\beta$ if for all $x \in |U_0|$ (the closed points of $U$), the eigenvalues of $F^*_x$ acting on $\mathcal{F}_0$ (see Remark 4 of Subsection 2ю5) are algebraic numbers all of which complex conjugates have absolute value $N(x)^{\beta/2}=q^{\deg(x)\beta/2}$. It is easy to see that if $\mathcal{F}_0$ has weight $\beta$ then $\overset{r} \otimes \mathcal{F}_0$ has weight $r\beta$. In particular, one can show that $\mathbb{Q}_l(1)$ has weight $-2$ and $\mathbb{Q}_l(i)$ has weight $-2i$ (see [2], 30.5).

\begin{thrm13}
Assume that:

\textbf{ (a) Bilinear form} $\mathcal{F}_0$ is equipped with a bilinear skew-symmetric nondegenerate form $$\psi: \mathcal{F}_0 \otimes \mathcal{F}_0 \to \mathbb{Q}_l(-\beta).$$

\textbf{ (b) Big geometric monodromy} The image of $\pi_1(U, u)$ in $GL(\mathcal{F}_u)$ is an open subgroup of the symplectic group\footnote{For a vector space $V$ and a skew-symmetric bilinear form $\psi$ the symplectic group is defined as $$Sp(V, \psi)=\{ \alpha \in GL(V) \ \psi(\alpha v, \alpha v')=\psi(v, v'), \ v, v' \in V \}.$$} $Sp(\mathcal{F}_u, \psi_u)$.

\textbf{ (c) Rationality} For all $x \in |U_0|$, the polynomial $\det(1-F_x t, \mathcal{F}_0)$ has rational coefficients. 

Then $\mathcal{F}$ has weight $\beta$.

\end{thrm13}

\begin{remark}
There is a different way to formulate condition (c) of the above theorem. Let $K$ be a field such that $\mathbb{Q} \subset K$ and let $\varphi: V \to V$ be an endomorphism of a vector space $V$ over $\mathbb{K}$. Then $\det(1-\varphi t, V)$ has rational coefficients if and only if there exists a basis for $V$ such that the matrix of $\varphi$ has rational coefficients (one direction is obvious and the converse follows from the theory of rational forms for matrices). We will say that an endomorphism $\varphi$ is rational if it satisfies these equivalent conditions. 
\end{remark}

\begin{proof}

We may and do assume that $U$ is affine and that $\mathcal{F} \neq 0$. The following two lemmas are straightforward. 

\begin{lemma}
Let $2k$ be an even integer and $x \in |U_0|$. The local factors $\det(1-F_x t^{\deg(x)}, \overset{2k} \otimes \mathcal{F}_0)^{-1}$ are formal power series with positive rational coefficients.
\end{lemma}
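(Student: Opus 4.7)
The strategy is to combine Lemma 3 of the paper (the trace/determinant identity) with multiplicativity of trace under tensor products and the rationality hypothesis (c) of the Main Lemma. Applying Lemma 3 with $S = t^{\deg(x)}$ to $F_x$ acting on $\overset{2k}\otimes \mathcal{F}_0$ yields
$$\det\!\bigl(1 - F_x t^{\deg(x)},\, \overset{2k}\otimes \mathcal{F}_0\bigr)^{-1} = \exp\!\left(\sum_{n \geq 1} Tr\bigl(F_x^n,\, \overset{2k}\otimes \mathcal{F}_0\bigr)\, \frac{t^{n \deg(x)}}{n}\right).$$

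The key algebraic observation is that, since $F_x$ acts diagonally on the $2k$-fold tensor power, multiplicativity of trace gives
$$Tr\bigl(F_x^n,\, \overset{2k}\otimes \mathcal{F}_0\bigr) = Tr(F_x^n, \mathcal{F}_0)^{2k}.$$
I would then invoke hypothesis (c): the characteristic polynomial $\det(1 - F_x t, \mathcal{F}_0)$ lies in $\mathbb{Q}[t]$, so by Newton's identities all power sums $Tr(F_x^n, \mathcal{F}_0)$ are rational (a priori they only lie in $\mathbb{Q}_l$). Raising to the \emph{even} power $2k$ produces a non-negative rational; hence the inner series
$$\sum_{n \geq 1} Tr(F_x^n, \mathcal{F}_0)^{2k}\, \frac{t^{n\deg(x)}}{n}$$
has non-negative rational coefficients and zero constant term.

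Finally, since the Taylor series of $\exp$ has non-negative rational coefficients, exponentiating a series with non-negative rational coefficients and zero constant term produces a formal power series in $\mathbb{Q}[[t]]$ whose constant term is $1$ and all of whose remaining coefficients are non-negative rationals; this is the claimed positivity.

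There is no serious obstacle here: the argument is entirely formal once one recognises that the evenness of $2k$ is exactly what turns a rational trace into a square, producing the sign one needs. The only point requiring a bit of care is noting that rationality of the traces (rather than merely their membership in $\mathbb{Q}_l$) is precisely what hypothesis (c) provides, via Newton's identities.
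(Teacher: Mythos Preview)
Your proof is correct and follows essentially the same route as the paper: apply the trace--determinant identity (Lemma~3), use multiplicativity of trace on tensor powers to get $Tr(F_x^n,\overset{2k}\otimes\mathcal{F}_0)=Tr(F_x^n,\mathcal{F}_0)^{2k}$, invoke hypothesis~(c) for rationality of the traces, and observe that an even power yields non-negativity, which survives exponentiation. Your explicit mention of Newton's identities and your care in writing ``non-negative'' rather than ``positive'' are minor refinements, but the argument is the same.
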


\begin{proof}
The rationality assumption (c) ensures that for all $n$ we have $Tr(F_x^n, \mathcal{F}_0) \in \mathbb{Q}$. For any endomorphism $F:V \to V$ of a finite-dimensional vector space over a field $k$ and for all $i$ one has $$Tr(F, \overset{i} \otimes V)=Tr(F, V)^i.$$ Hence $$Tr(F_x^n, \overset{2k} \otimes \mathcal{F}_0)=Tr(F_x^n, \mathcal{F}_0)^{2k}$$ is a positive rational. We have established the following formula in Lemma 3 (Subsection 2.5) $$\log(\det((1-Ft, V)^{-1})=\sum_{n > 0} Tr(F^n, V)\frac{t^n}{n}.$$
So $\log(\det(1-F_x t^{\deg(x)}, \overset{2k} \otimes \mathcal{F}_0)^{-1})$ has positive rational coefficients and the coefficients of its exponentiation are therefore also positive.

\end{proof}

\begin{lemma}
Let $\{ f_i=\sum_n a_{i, n}t^n \}_{i \geq 1}$ be a sequence of formal power series with positive real coefficients. Assume that the order of $f_i-1$ tends to infinity with $i$ and let $f=\prod_i f_i$. Then for any $i$ the radius of convergence of $f_i$ is greater or equal than that of $f$. 

If $f$ and all the $f_i$ are Taylor expansions of meromorphic functions, then $$inf \{ |z| \ | f(z)=\infty \} \leq inf \{ |z| \ | f_i(z)=\infty \}.$$
\end{lemma}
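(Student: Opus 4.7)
Both conclusions should fall out of a direct coefficient comparison; the second then needs one classical theorem from complex analysis. First I would note that the product $f = \prod_i f_i$ makes sense as a formal power series: write $f_i = 1 + \sum_{n \geq k_i} a_{i,n} t^n$ with $k_i \to \infty$, so that for each fixed $N$ only finitely many $f_i$ contribute to the coefficients of $f$ up to order $N$. Write $f(t) = \sum_n c_n t^n$; expanding the product, each $c_n$ is a finite sum of products $\prod_j a_{j, n_j}$ with $\sum_j n_j = n$, and all such products are non-negative by the positivity hypothesis. For fixed $i$ and $n$, the particular term with $n_i = n$ and $n_j = 0$ for all $j \neq i$ contributes $a_{i,n}\cdot \prod_{j\neq i} a_{j,0} = a_{i,n}$ (since $f_j(0) = 1$). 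Hence $c_n \geq a_{i,n} \geq 0$ for every $i$ and every $n$. Consequently, for any $r > 0$, convergence of $\sum_n c_n r^n$ forces convergence of $\sum_n a_{i,n} r^n$, which yields the first assertion: the radius of convergence of $f_i$ is at least that of $f$.

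For the second assertion, assume in addition that $f$ and each $f_i$ are Taylor expansions at $0$ of meromorphic functions on a neighborhood of the closed disk of their respective radii of convergence. Here I would invoke the Pringsheim-Vivanti theorem: a power series with non-negative real coefficients always has a singularity at the positive real point $z = R$, where $R$ is its radius of convergence. In the meromorphic setting this singularity is necessarily a pole, so $R_f = \inf\{|z| : f(z) = \infty\}$ and likewise $R_{f_i} = \inf\{|z| : f_i(z) = \infty\}$. Combined with the first part, this gives
\[
\inf\{|z| : f(z) = \infty\} = R_f \leq R_{f_i} = \inf\{|z| : f_i(z) = \infty\},
\]
as required. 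The only genuinely nontrivial ingredient is the Pringsheim-Vivanti theorem; the rest is bookkeeping, and the main thing to watch is that the infinite product is interpreted as a formal series so that the coefficient comparison $c_n \geq a_{i,n}$ is unambiguous and does not presuppose convergence.
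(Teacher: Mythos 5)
Your proof is correct and follows the same essential route as the paper's (very terse) proof: the coefficient-wise comparison $a_{i,n} \leq a_n$ gives the first assertion, and identifying the infima with the radii of convergence gives the second. The paper simply states $a_{i,n} \leq a_n$ and that "those numbers are the radii of convergence"; you fill in exactly the details being elided, including the observation that the formal infinite product is well-defined because the orders of $f_i - 1$ go to infinity, and that picking out the single term with $n_i = n$, $n_j = 0$ for $j \neq i$ is what yields the comparison (which tacitly uses $f_j(0) = 1$, as is the case in the application). One small remark: the Pringsheim--Vivanti theorem is a valid but slightly stronger tool than is strictly needed for the second part. The more elementary fact that \emph{any} power series has at least one singular point on its circle of convergence (regardless of the sign of the coefficients), combined with the observation that for a meromorphic function any such singularity must be a pole, already gives $R_g = \inf\{|z| : g(z) = \infty\}$; the positivity of coefficients is not used again in the second half. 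Invoking Pringsheim does no harm and has the mild aesthetic advantage of locating the pole on the positive real axis, consistent with how the lemma is applied.
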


\begin{proof}
Let $f=\sum_n a_n t^n$. Then we have $a_{i, n} \leq a_n$ and this implies the first part.
For the second part simply note that those numbers are the radii of convergence. 
\end{proof}

Assumption (b) of the theorem implies the following lemma.
\begin{lemma}
$\pi_1(U, u)$ is Zariski dense in $Sp(\mathcal{F}_u, \psi_u)$ and therefore $$Hom(\overset{2k} \otimes \mathcal{F}_u, \mathbb{Q}_l)^{\pi_1(U, u)}=Hom(\overset{2k} \otimes \mathcal{F}_u, \mathbb{Q}_l)^{Sp(\mathcal{F}_u, \psi_u)}.$$
\end{lemma}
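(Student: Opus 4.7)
The statement has two parts, and the second follows formally from the first, so the plan is to focus on Zariski density and then deduce the equality of invariants.

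First I would establish Zariski density of the image $H$ of $\pi_1(U,u)$ in $G:=Sp(\mathcal{F}_u,\psi_u)$. By hypothesis (b), $H$ is an open subgroup of $G$ in the $l$-adic topology. Let $\bar H$ denote its Zariski closure in $G$ (which is an algebraic subgroup, since the Zariski closure of a subgroup of an algebraic group is again an algebraic subgroup). The key point is that an $l$-adically open subgroup of a connected algebraic group over $\mathbb{Q}_l$ must equal the whole group. Concretely, an $l$-adically open subgroup contains an open neighborhood of the identity, and hence its associated $l$-adic Lie algebra has full dimension, equal to $\dim G = \dim \mathfrak{sp}(\mathcal{F}_u,\psi_u)$. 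The Lie algebra of $\bar H$, viewed as a $\mathbb{Q}_l$-algebraic subgroup, contains this Lie algebra, so $\dim \bar H=\dim G$. Since $G=Sp$ is connected as an algebraic group over $\mathbb{Q}_l$, this forces $\bar H=G$, i.e.\ $H$ is Zariski dense.

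Second, I would deduce the equality of invariants by a general fact: if an algebraic group $G$ acts algebraically (rationally) on a finite-dimensional vector space $V$ and $H\subset G$ is a Zariski dense subgroup, then $V^H=V^G$. The reason is that for each $v\in V$, the stabilizer $G_v=\{g\in G : gv=v\}$ is a Zariski closed subgroup of $G$. If $v\in V^H$ then $H\subset G_v$, and taking Zariski closures gives $G=\bar H\subset G_v$, i.e.\ $v\in V^G$. The reverse inclusion is trivial. Applying this with $V=\mathrm{Hom}(\otimes^{2k}\mathcal{F}_u,\mathbb{Q}_l)$, on which $G=Sp(\mathcal{F}_u,\psi_u)$ acts rationally through its action on $\mathcal{F}_u$, yields the desired equality of invariants.

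The only nontrivial step is the first one, passing from $l$-adic openness to Zariski density. The subtlety is making precise the link between the $l$-adic Lie algebra of $H$ and the algebraic Lie algebra of the Zariski closure $\bar H$; this is standard (see, e.g., the appendix of Deligne's Weil~II or Bourbaki, \emph{Groupes et alg\`ebres de Lie}), but it is the one place where the hypothesis that $H$ be \emph{open} (and not merely large) is used, and it also uses connectedness of $Sp$, which is well known. Once Zariski density is in hand, the conclusion about invariants is purely formal.
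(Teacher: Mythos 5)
Your proposal is correct and follows essentially the same route as the paper: both pass from $l$-adic openness of the monodromy image to full dimension of its ($l$-adic) Lie algebra, then to full dimension of the Zariski closure, then use connectedness of $Sp$ to conclude Zariski density, and finally observe that fixing a vector is a Zariski-closed condition so invariants under a Zariski dense subgroup coincide with invariants under the whole group. The only cosmetic difference is that the paper explicitly cites the $l$-adic version of Cartan's theorem (via Serre, LG~5.42) to justify that the closed image is a Lie subgroup, whereas you fold this into the phrase "its associated $l$-adic Lie algebra."
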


\begin{proof}
$\pi_1(U, u)$ is a Galois group so it is compact and its image $\bar \pi$ in $Sp(\mathcal{F}_u, \psi_u)$ is closed. By the $l$-adic version of Cartan's theorem (see [21], LG 5.42) $\bar \pi$ is a Lie subgroup of $Sp$. Since it is open, it has the same dimension as a Lie group. Let $G \subset Sp$ be the Zariski closure of $\bar \pi$ in $Sp$ (the smallest closed subvariety of $Sp$ defied over $\mathbb{Q}_l$ such that $\bar \pi \subset G(\mathbb{Q}_l)$). Then $G$ is an algebraic subgroup of $Sp$ and we want to show that $G=Sp$. Consider $$T_e \bar \pi \subset T_e G \subset T_e Sp.$$ Since $T_e \bar \pi= T_e Sp$, $\dim G=\dim Sp$ and so $G=Sp$ since $Sp$ is connected. 

Let $f: \overset{2k} \otimes \mathcal{F}_u \to \mathbb{Q}_l$ be a map fixed by $\pi_1(U, u)$. For $\tau \in Sp (\mathcal{F}_u, \psi_u)$ fixing $f$ is an algebraic condition on $\tau$ so if $f$ is fixed by $\bar \pi$ it is also fixed by its Zariski closure. 
\end{proof}

Obviously, $$Hom(\overset{2k} \otimes \mathcal{F}_u, \mathbb{Q}_l)^{Sp}=Hom((\overset{2k} \otimes \mathcal{F}_u)_{Sp}, \mathbb{Q}_l).$$ If $\psi_1, \cdots, \psi_N$ is a basis for $Hom(\overset{2k} \otimes \mathcal{F}_u, \mathbb{Q}_l)^{Sp}$ then the map $$ \overset{2k} \otimes \mathcal{F}_u \to \mathbb{Q}_l^N: x \to \prod_{i=1}^N \psi_i (x)$$ induces an isomorphism $$(\overset{2k} \otimes \mathcal{F}_u)_{Sp} \overset{\sim} \to \mathbb{Q}_l^{N}.$$
We will now use invariant theory to choose a basis $\{ \psi_i \}$ such that the action of $Gal(\mathbb{\bar F}_q/\mathbb{F}_q)$ transfers to an action on $\mathbb{Q}_l^N$. For any partition $P$ of $\{1, \cdots, 2k\}$ into $k$ two element sets $\{ i_{\alpha}, j_{\alpha} \} \ (i_{\alpha} < j_{\alpha})$ we have an invariant form $$\psi_P: \overset{2k} \otimes \mathcal{F}_u \to \mathbb{Q}_l(-k\beta): x_1 \otimes \cdots \otimes x_{2k} \to \prod_{\alpha} \psi_u (x_{i_\alpha}, x_{j_{\alpha}}).$$ 
\begin{lemma}
The invariant forms $\psi_P$ span $Hom(\overset{2k} \otimes \mathcal{F}_u, \mathbb{Q}_l)^{Sp}$.
\end{lemma}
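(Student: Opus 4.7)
The plan is to reduce this to the first fundamental theorem of invariant theory for the symplectic group, due to Weyl. The non-degenerate skew form $\psi_u$ gives an $Sp$-equivariant isomorphism $\mathcal{F}_u \xrightarrow{\sim} \mathcal{F}_u^{\vee}(-\beta)$. Since $Sp(\mathcal{F}_u, \psi_u)$ acts trivially on the one-dimensional $\mathbb{Q}_l$-space $\mathbb{Q}_l(-\beta)$, this twist is irrelevant for $Sp$-invariance, and we obtain an $Sp$-equivariant isomorphism
\[
\operatorname{Hom}(\otimes^{2k}\mathcal{F}_u, \mathbb{Q}_l)^{Sp} \;\cong\; (\otimes^{2k}\mathcal{F}_u)^{Sp}\otimes \mathbb{Q}_l(k\beta).
\]
Under this identification the forms $\psi_P$ correspond to the ``contraction tensors'' $\Psi_P \in \otimes^{2k}\mathcal{F}_u$ obtained from a pair partition $P = \{\{i_\alpha, j_\alpha\}\}$ by inserting a copy of the dual bivector $\psi_u^{-1} \in \mathcal{F}_u \otimes \mathcal{F}_u$ in positions $(i_\alpha, j_\alpha)$ and multiplying over $\alpha$. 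Thus the lemma is equivalent to the assertion that the $\Psi_P$ span $(\otimes^{2k}\mathcal{F}_u)^{Sp(\mathcal{F}_u, \psi_u)}$.

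Next I would invoke the first fundamental theorem of invariant theory for $Sp$, which asserts that for any symplectic space $(V, \psi)$ over a field of characteristic zero, $(V^{\otimes n})^{Sp(V,\psi)} = 0$ if $n$ is odd, and for $n = 2k$ the invariants are spanned by the contraction tensors $\Psi_P$ indexed by partitions $P$ of $\{1, \ldots, 2k\}$ into $k$ unordered pairs (see Weyl, \emph{The Classical Groups}, Ch.~VI). Since $\mathbb{Q}_l$ has characteristic zero this applies directly to $\mathcal{F}_u$. Transporting back along the isomorphism of the previous paragraph yields the claim for the $\psi_P$.

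The main obstacle is the first fundamental theorem itself. Its cleanest proof proceeds via Schur--Weyl duality for the pair $(Sp(V), \mathfrak{B}_n(-\dim V))$, where $\mathfrak{B}_n$ is the Brauer algebra on $n$ strands with parameter $-\dim V$: one shows that the natural map $\mathfrak{B}_n \to \operatorname{End}_{Sp(V)}(V^{\otimes n})$ is surjective, and then identifies $(V^{\otimes 2k})^{Sp}$ with $\operatorname{Hom}_{Sp}(V^{\otimes k}, V^{\otimes k})$ via the isomorphism $V \cong V^\vee$ furnished by $\psi$. A standard diagrammatic basis of $\mathfrak{B}_{2k}$ is indexed by Brauer diagrams, which are precisely the pair partitions of $\{1, \ldots, 2k\}$; pulling the surjection through the identification produces the $\Psi_P$. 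Note that we do not need the $\Psi_P$ to be linearly independent (they are not when $\dim \mathcal{F}_u$ is too small compared to $k$); only the spanning statement is required for the remainder of the argument.
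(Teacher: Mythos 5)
Your proposal is correct and takes essentially the same approach as the paper: both reduce the statement to the first fundamental theorem of invariant theory for the symplectic group and cite Weyl, \emph{The Classical Groups}, Ch.~VI (the paper also points to Fulton--Harris, Appendix~F). The paper's ``proof'' is literally just that citation, so your additional detail --- the $Sp$-equivariant identification $\mathcal{F}_u \cong \mathcal{F}_u^{\vee}(-\beta)$ reducing forms to tensors, and the Brauer-algebra Schur--Weyl sketch of the fundamental theorem itself --- fills in material the paper treats as a black box.
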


\begin{proof}
See [22], Chapter VI, par. 1 or [23], Appendix F for this invariant theory result. 
\end{proof}

Therefore, a basis of the forms of type $\psi_P$ induces an isomorphism $$(\overset{2k} \otimes \mathcal{F}_u)_{\pi_1}=(\overset{2k} \otimes \mathcal{F}_u)_{Sp} \overset{\sim} \to \mathbb{Q}_l(-k\beta)^{N}.$$
Then, according to Proposition 3 (Subsection 2.6) we have  $$H_c^2(U, \overset{2k} \otimes \mathcal{F}) \simeq \mathbb{Q}_l(-k\beta-1)^N$$ and  $$H_c^0(U, \overset{2k} \otimes \mathcal{F})=0,$$ so the cohomological interpretation of $L$-functions gives $$Z(U_0, \overset{2k} \otimes \mathcal{F}_0, t)=\frac{\det(1-F^*t, H^1(U, \overset{2k} \otimes \mathcal{F}))}{(1-q^{k\beta+1}t)^N}.$$ We shall only need the fact that  the poles of this zeta function have absolute value $t=1/q^{k\beta+1}$. If $\alpha$ is an eigenvalue of $F_x$ on $\mathcal{F}_0$, then $\alpha^{2k}$ is an eigenvalue of $F_x$ on $\overset{2k} \otimes \mathcal{F}_0$. Now, let $\alpha$ be any complex conjugate of the original $\alpha$. Then $1/\alpha^{2k/\deg(x)}$ is a pole of $\det(1-F_x t^{\deg(x)}, \overset{2k} \otimes \mathcal{F}_0)^{-1}$. By Lemmas 6 and 7 we have $$|1/q^{k\beta+1}| \leq |1/\alpha^{2k/\deg(x)}|$$ or $$|\alpha| \leq q^{\deg(x)(\frac{\beta}{2}+\frac{1}{2k})}$$ and letting $k$ go to infinity gives $$|\alpha| \leq q^{\frac{\deg(x)\beta}{2}}.$$ The existence of the pairing $\psi$ implies that $q^{\deg(x)\beta} \alpha^{-1}$ is an eigenvalue, so we have the inequality $$|q^{\deg(x)\beta} \alpha^{-1}| \leq q^{\frac{\deg(x)\beta}{2}}$$ or $$q^{\frac{\deg(x)\beta}{2}} \leq |\alpha|.$$ This completes the proof.

\end{proof}

\newpage

\begin{cor}
Let $\alpha$ be an eigenvalue of $F^*$ acting on $H_c^1(U, \mathcal{F})$. Then $\alpha$ is an algebraic number all of which complex conjugates satisfy $$|\alpha| \leq q^{\frac{\beta+1}{2}+\frac{1}{2}}$$ and the action of $F^*$ on $H_c^1(U, \mathcal{F})$ is rational.

\end{cor}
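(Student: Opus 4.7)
The plan is to deduce the corollary from the Main Lemma by first showing that the compactly-supported cohomology of $\mathcal{F}$ is concentrated in degree $1$, and then bounding the roots of the resulting polynomial via a direct Euler-product convergence argument. As in the Main Lemma's proof we may assume $U$ is affine, so Proposition~3(i) gives $H_c^0(U,\mathcal{F}) = 0$. For $H_c^2$, Proposition~3(ii) identifies it with $(\mathcal{F}_u)_{\pi_1(U,u)}(-1)$; Lemma~8 of the Main Lemma's proof shows $\pi_1(U,u)$ is Zariski dense in $Sp(\mathcal{F}_u, \psi_u)$, so these coinvariants coincide with $(\mathcal{F}_u)_{Sp}$, which vanish by irreducibility of the standard symplectic representation. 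Theorem~7 then collapses to
$$Z(U_0, \mathcal{F}_0, t) = \det\bigl(1 - F^* t, H_c^1(U, \mathcal{F})\bigr),$$
so that the Frobenius eigenvalues on $H_c^1$ are precisely the inverse roots of this polynomial.

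Next I would exploit the Euler product representation
$$Z(U_0, \mathcal{F}_0, t) = \prod_{x \in |U_0|} \det\bigl(1 - F_x^* t^{\deg(x)}, \mathcal{F}_0\bigr)^{-1}.$$
Since $\mathcal{F}_0$ has weight $\beta$ by the Main Lemma, every local factor is a non-vanishing holomorphic function of $t$ on the disk $|t| < q^{-\beta/2}$, and a standard expansion gives $\bigl|\log \det(1 - F_x^* t^{\deg(x)}, \mathcal{F}_0)^{-1}\bigr| = O\bigl((q^{\beta/2}|t|)^{\deg(x)}\bigr)$. Combined with the elementary bound $\#\{x \in |U_0| : \deg(x) = d\} = O(q^d/d)$ on an affine curve, a routine summation shows that the product converges absolutely, and hence to a non-vanishing holomorphic function, on the smaller disk $|t| < q^{-\beta/2 - 1}$. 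The polynomial $\det(1 - F^* t, H_c^1(U, \mathcal{F}))$ therefore has no zero in that disk, which gives $|\alpha| \leq q^{\beta/2 + 1} = q^{(\beta + 1)/2 + 1/2}$ for every eigenvalue $\alpha$.

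For rationality, each local factor has coefficients in $\mathbb{Q}$ by hypothesis~(c) of the Main Lemma; the formal product therefore lies in $\mathbb{Q}[[t]]$, so the characteristic polynomial of $F^*$ on $H_c^1(U,\mathcal{F})$ lies in $\mathbb{Q}[t]$ and $F^*$ is rational in the sense of Remark~4. In particular, the complex conjugates of any eigenvalue $\alpha$ are themselves eigenvalues and the zero-free-disk estimate above applies to them unchanged. I expect the main obstacle to be the vanishing of $H_c^2$, which requires promoting the open-subgroup hypothesis~(b) to Zariski density in $Sp$ so that the invariant-theoretic vanishing of the coinvariants of the standard symplectic representation can be invoked; once that is in place, the remaining convergence estimate for the Euler product is elementary.
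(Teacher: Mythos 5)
Your proposal is correct and follows the paper's proof essentially step for step: Proposition~3 kills $H_c^0$ and $H_c^2$ (the latter via Zariski density of $\pi_1$ in $Sp$ together with the vanishing of invariant/covariant linear forms on the standard symplectic representation), Theorem~7 then identifies $Z(U_0,\mathcal F_0,t)$ with $\det(1-F^*t,H_c^1)$, rationality comes from the rational Euler factors, and the eigenvalue bound comes from absolute convergence of the Euler product on $|t|<q^{-\beta/2-1}$. The only cosmetic difference is that you use the slightly sharper bound $\#\{x:\deg x=d\}=O(q^d/d)$ and argue via the logarithm, whereas the paper bounds $\sum_{i,x}|\alpha_{i,x}t^{\deg x}|$ directly using the cruder $\#\{x:\deg x=d\}\le q^d$; both give the same radius.
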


\begin{proof}
Since $U$ is affine Proposition 3 (Subsection 2.6) gives $$H_c(U, \mathcal{F})=0$$ and $$H_c^2 (U, \mathcal{F})=(\mathcal{F}_u)_{\pi_1(U, u)}(-1).$$ There are no $Sp$-invariant linear forms $\mathcal{F}_u \to \mathbb{Q}_l$ (or $\overset{n} \otimes \mathcal{F}_u \to \mathbb{Q}_l$ for any odd $n$, by invariant theory) so assumption (b) of the theorem implies that this cohomology group is also zero. The cohomological interpretation of $L$-functions gives $$Z(U_0, \mathcal{F}_0, t)=\det(1-F^*t, H_c^1(U, \mathcal{F})).$$ 

The left hand side is a formal series with rational coefficients, given its representation as a product and the rationality assumption. The right hand side is therefore a polynomial with rational coefficients and $1/\alpha$ is its root, so the action of $F^*$ is rational and $\alpha$ is algebraic. To complete the proof it suffices to show that the infinite product $$Z(U_0, \mathcal{F}_0, t)=\prod_{x \in |U_0|} \det(1-F_x^*t^{\deg(x)}, \mathcal{F}_0)^{-1}$$ converges absolutely for $|t| <  q^{-\frac{\beta}{2}-1}$. 
Let $N$ be the rank of $\mathcal{F}$ and let $\alpha_{i,x}$ be the eigenvalues of $F^*_x$ acting on $\mathcal{F}_x$. Then $$\det(1-F^*_xt^{\deg(x)}, \mathcal{F}_0)=\prod_{i=1}^N (1-\alpha_{i, x}t)$$ and the Main Lemma implies that $|\alpha_{i, x}|=q^{\deg(x)\beta/2}$. The convergence of the infinite product is equivalent to that of the series $$\sum_{i, x} |\alpha_{i, x} t^{\deg(x)}|.$$ 

For $|t|=q^{-\frac{\beta}{2}-1-\varepsilon}\ (\varepsilon >0)$ we have $$\sum_{i, x} |\alpha_{i, x} t^{\deg(x)}|=N\sum_x q^{\deg(x)(-1-\varepsilon)}$$ so it suffices to prove that $\sum_x q^{\deg(x)(-1-\varepsilon)}$ converges. Every closed point of $U_0$ of degree $n$ contributes at least $1$ (in fact, exactly $n$) elements to $U_0(\mathbb{F}_{q^n})$. Since $U_0 \subset \mathbb{A}^1$, the number of closed points of degree $n$ is bounded above by $q^n$ and we have $$\sum_x q^{\deg(x)(-1-\varepsilon)} \leq \sum_n q^nq^{n(-1-\varepsilon)}=\sum_n q^{-n\varepsilon} < \infty,$$ so the corollary is proved.

Note that we have only used the facts that $\mathcal{F}_0$ has weight $\beta$ and $(\mathcal{F}_u)_{\pi_1(U, u)}(-1)=0$. 
\end{proof}

\begin{cor}
Let $j_0$ be the inclusion of $U_0$ in $\mathbb{P}_{\mathbb{F}_q}^1$, $j$ that of $U$ into $\mathbb{P}^1$ and $\alpha$ an eigenvalue of $F^*$ acting on $H^1(\mathbb{P}^1, j_*\mathcal{F})$. Then $\alpha$ is an algebraic number all of which complex conjugates satisfy $$q^{\frac{\beta+1}{2}-\frac{1}{2}} \leq |\alpha| \leq q^{\frac{\beta+1}{2}+\frac{1}{2}}$$ and the action of $F^*$ on $H^1(\mathbb{P}^1, j_*\mathcal{F})$ is rational.
\end{cor}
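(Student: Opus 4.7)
The plan is to realize $H^1(\mathbb{P}^1, j_*\mathcal{F})$ as an $F^*$-equivariant quotient of $H_c^1(U, \mathcal{F})$, which yields the upper bound and algebraicity directly from Corollary 1; the lower bound is then obtained from Poincaré duality by feeding the same argument back through the dual sheaf $\check{\mathcal{F}}(1)$.

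Consider the short exact sequence
$$0 \to j_!\mathcal{F} \to j_*\mathcal{F} \to i_*i^*j_*\mathcal{F} \to 0$$
on $\mathbb{P}^1$, where $i \colon \mathbb{P}^1 \setminus U \hookrightarrow \mathbb{P}^1$ is the inclusion of the finite complement. The quotient is a skyscraper, so $H^k(\mathbb{P}^1, i_*i^*j_*\mathcal{F}) = 0$ for $k \geq 1$, and $H^0(\mathbb{P}^1, j_*\mathcal{F}) = \mathcal{F}_u^{\pi_1(U, u)}$ vanishes by the big-monodromy hypothesis (b), since the standard representation of $Sp(\mathcal{F}_u, \psi_u)$ admits no nonzero invariants. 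The long exact sequence then yields
$$0 \to H^0(\mathbb{P}^1, i_*i^*j_*\mathcal{F}) \to H_c^1(U, \mathcal{F}) \to H^1(\mathbb{P}^1, j_*\mathcal{F}) \to 0,$$
exhibiting $H^1(\mathbb{P}^1, j_*\mathcal{F})$ as an $F^*$-equivariant quotient of $H_c^1(U, \mathcal{F})$. The upper bound $|\alpha| \leq q^{(\beta+1)/2 + 1/2}$ together with algebraicity of $\alpha$ follow at once from Corollary 1.

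For the lower bound I invoke the third form of Poincaré duality: the pairing
$$H^1(\mathbb{P}^1, j_*\mathcal{F}) \otimes H^1(\mathbb{P}^1, j_*\check{\mathcal{F}}(1)) \to \mathbb{Q}_l$$
is perfect and $F^*$-equivariant, so the eigenvalues on the two factors are mutually reciprocal. The form $\psi$ identifies $\check{\mathcal{F}}_0 \simeq \mathcal{F}_0(\beta)$, hence $\check{\mathcal{F}}_0(1) \simeq \mathcal{F}_0(\beta+1)$, and this Tate twist still satisfies the three hypotheses of the Main Lemma with new weight $-\beta - 2$: the form (a) is the obvious twist of $\psi$ landing in $\mathbb{Q}_l(\beta+2) = \mathbb{Q}_l(-(-\beta-2))$; the geometric monodromy (b) is unchanged, since $\mathbb{Q}_l(r)$ has trivial geometric monodromy; and the rationality (c) persists because the $F_x$-eigenvalues are merely rescaled by the rational quantity $q^{-\deg(x)(\beta+1)}$. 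Applying the upper bound of the previous paragraph to $\check{\mathcal{F}}(1)$ gives $|1/\alpha| \leq q^{(-\beta-2+1)/2+1/2} = q^{-\beta/2}$, i.e.\ $|\alpha| \geq q^{\beta/2} = q^{(\beta+1)/2-1/2}$, as required.

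The delicate point I expect to be the main obstacle is rationality of the $F^*$-action on $H^1(\mathbb{P}^1, j_*\mathcal{F})$. My strategy is to combine the cohomological interpretation of the zeta function with the vanishings $H^0(\mathbb{P}^1, j_*\mathcal{F}) = 0$ and (by duality applied to $\check{\mathcal{F}}(1)$) $H^2(\mathbb{P}^1, j_*\mathcal{F}) = 0$ to obtain
$$Z(\mathbb{P}^1_{\mathbb{F}_q}, j_{0*}\mathcal{F}_0, t) = \det(1 - F^*t, H^1(\mathbb{P}^1, j_*\mathcal{F})),$$
so it suffices to verify that the Euler product defining the left-hand side has rational coefficients. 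The contribution over $U_0$ is $Z(U_0, \mathcal{F}_0, t)$, already known to be rational by Corollary 1, while the remaining finitely many boundary factors $\det(1 - F_x t^{\deg x}, (\mathcal{F}_0)^{I_x})^{-1}$ require a separate check that the $F_x$-action on the inertia-invariant subspace is itself rational. This is the sole non-routine step; everything else is a mechanical combination of the exact sequence above, Corollary 1, and Poincaré duality (third form).
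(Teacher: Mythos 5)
Your proposal follows essentially the same route as the paper: the exact sequence involving $j_!\mathcal{F}$ and $j_*\mathcal{F}$, the resulting surjection $H_c^1(U, \mathcal{F}) \to H^1(\mathbb{P}^1, j_*\mathcal{F})$, the upper bound and algebraicity from Corollary~1, and the lower bound from the third form of Poincar\'e duality applied to $\mathcal{\widecheck F}(1)$ with $\beta$ replaced by $-2-\beta$. Your weight bookkeeping is correct, and observing that $H^0(\mathbb{P}^1, j_*\mathcal{F})=0$ (so that the four-term sequence is in fact short exact) is a harmless refinement of what the paper actually uses.

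Where you diverge is the rationality clause, and you are right to be suspicious there. The paper asserts that rationality of $F^*$ on $H^1(\mathbb{P}^1, j_*\mathcal{F})$ ``follows from Corollary~1'' via the surjection, but an $F^*$-equivariant quotient of a module with rational characteristic polynomial need not itself have rational characteristic polynomial: take $F=\mathrm{diag}(c,-c)$ on $\mathbb{Q}_l^2$ with $c\in\mathbb{Q}_l\setminus\mathbb{Q}$ and $c^2\in\mathbb{Q}$ (possible for suitable $l$); then $\det(1-Ft)=1-c^2t^2$ is rational, yet the quotient by one eigenline has characteristic polynomial $1\mp ct$, which is not. Your reduction of the question to rationality of the boundary factors $\det(1-F_xt^{\deg x},(j_{0*}\mathcal{F}_0)_x)$ at $x\in S_0$ --- points not covered by hypothesis~(c) of the Main Lemma --- pinpoints exactly what is missing, and neither the paper's one-line claim nor your sketch closes it. The saving observation is that Corollary~4 (the only downstream use) needs only the bound on \emph{all complex conjugates} of $\alpha$, and this does not require the quotient to be rational: $\det(1-F^*t, H^1(\mathbb{P}^1, j_*\mathcal{F}))$ divides the rational polynomial $\det(1-F^*t, H_c^1(U,\mathcal{F}))$, so each eigenvalue $\alpha$ and every one of its Galois conjugates is already among the roots of a rational polynomial to which the upper bound of Corollary~1 applies uniformly; the dual run through $\mathcal{\widecheck F}(1)$ supplies the lower bound for all conjugates in the same way. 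So the substance of the statement is sound, but the rationality clause as written is not established by the paper's argument, and your identification of the gap is a genuine catch.
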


\begin{proof}
From the long exact sequence in cohomology defined by the short exact sequence\footnote{One may also use the exact sequence $$0 \to j_! \mathcal{F} \to j_*\mathcal{F} \to i_*i^*j_* \mathcal{F} \to 0$$ (for $i: Z \to \mathbb{P}^1$ the inclusion of the complement $Z=\mathbb{P}^1 \setminus U$ into $\mathbb{P}^1$) derived from that in [2], 8.15. by applying $j_*$.} $$0 \to j_! \mathcal{F} \to j_*\mathcal{F} \to j_*\mathcal{F}/j_! \mathcal{F} \to 0$$ ($j_!$ is the extension by 0) we obtain a surjection $$H_c^1(U, \mathcal{F}) \to H^1( \mathbb{P}, j_* \mathcal{F}),$$ so Corollary 1 implies that the action of $F^*$ on $H^1(\mathbb{P}^1, j_*\mathcal{F})$ is rational and we have: $$|\alpha| \leq q^{\frac{\beta+1}{2}+\frac{1}{2}}.$$ 
The sheaf $\mathcal{\widecheck F}(1)$ satisfies the same assumptions as $\mathcal{F}$ with $\beta$ replaced by $-2-\beta$. Hence the action of $F^*$ on $H^{2-i}(\mathbb{P}^1, j_*\mathcal{\widecheck F}(1))$ is rational and its eigenvalues $\alpha'$ satisfy $$|\alpha'| \leq q^{\frac{-\beta-1}{2}+\frac{1}{2}}.$$

From the third form of Poincare duality we have a canonical nondegenerate pairing $$H^i(X, j_*\mathcal{F}) \otimes H^{2-i}(X, j_*\mathcal{\widecheck F}(1)) \to H_c^{2}(X, \mathbb{Q}_l(1)) \overset{\sim} \to \mathbb{Q}_l$$ so each eigenvalue $\alpha$ is an inverse of an $\alpha'$ and we have $$|\alpha| \geq q^{\frac{\beta+1}{2}-\frac{1}{2}}.$$
\end{proof}

The results of this section will play a crucial role in Section 8. They suffice to prove Deligne's theorem for the case of a hypersurface of odd dimension $n$ in $\mathbb{P}_{\mathbb{F}_q}^{n+1}$ (provided that one knows Lefschetz theory well enough). That was the case that Deligne originally considered which convinced him that he can go all the way\footnote{He said that in an interview on the occasion of the Abel prize reward in 2013.}. We will come back to this in the end of Section 6.

\section{Overview of Lefschetz theory}

This section is a overreview of the Lefschetz theory results that we will use in Section 8. 

The first two subsections may be regarded as an independent review of the subject and are organized as follows: we first state the Lefschetz results on $\mathbb{C}$, then transpose these results into $l$-adic geometry and finally state the consequences for the higher direct images of $\mathbb{Q}_l$. The results will be carried into $l$-adic geometry by analogy. The standard reference for the proofs is SGA 7 (see exact references in [7], 5.13). Sketches of some of the results appear in [2], Chapters 31, 32. 

In the third subsection we actually check assumption (b) of the Main Lemma for a certain sheaf $\mathcal{F}_0$ (defined later). In the next section we will define this sheaf and verify that it also satisfies assumptions (a) and (c). The obtained result will be used in the final step of the proof in Section 8.  

\subsection{Local Lefschetz theory}

\textbf{Lefschetz results on $\mathbb{C}$} are as follows.

Let $D=\{z| \ |z| < 1\}$ be the unit disk, $D^*=D \setminus \{0\}$ and $f:X \to D$ a morphism of \textit{complex manifolds} (or, more generally, \textit{analytic spaces}). 

Assume that:

\textbf{(a)} $X$ is purely of dimension $n+1$ (and nonsingular in the case of analytic spaces).

\textbf{(b)} $f$ is proper.

\textbf{(c)} $f$ is smooth on the complement of the point $x$ of the \textit{special (singular) fiber} $X_0=f^{-1}(0)$.

\textbf{(d)} $x$ is a \textit{nondegenerate double point} of $f$.

Let $t \neq 0$ in $D$ and $X_t=f^{-1}(t)$ a \textit{general fiber}. To this data we associate:

\textbf{($\alpha$)} The \textit{specialization morphisms} $$sp: H^i(X_0, \mathbb{Z}) \to H^i(X_t, \mathbb{Z})$$ defined as the composition arrow in $$H^i(X_0, \mathbb{Z}) \overset{\sim} \leftarrow H^i(X, \mathbb{Z})\to H^i(X_t, \mathbb{Z}).$$

\textbf{($\beta$)} The \textit{monodromy transformations} $$T: H^i(X_t, \mathbb{Z}) \to H^i(X_t, \mathbb{Z}),$$ which describe the effect on the singular cycles of $X_t$ of "rotating $t$ around $0$". This is an action on $H^i(X_t, \mathbb{Z})$, seen as the stalk of $R^i f_* \mathbb{Z}|D^*$ (see Proposition 2 of Subsection 2.3) at $t$, of the positive generator of $\pi_1(D^*, t)$. 

We define the \textit{vanishing cycle} $$\delta \in H^n (X_t, \mathbb{Z})$$ as the unique (up to sign) generator of $$H^n(X_0, \mathbb{Z})^{\perp} \subset H^n (X_t, \mathbb{Z})$$ under the pairing induced by Poincare duality. 

Lefschetz theory describes ($\alpha$) and ($\beta$) in terms of $\delta$. For $i \neq n, \ n+1$ we have $$H^i(X_0, \mathbb{Z}) \overset{\sim} \to H^i(X_t, \mathbb{Z}) \ (i \neq n, \ n+1)$$ and for $i=n, \ n+1$ there is an exact sequence $$0 \to H^n(X_0, \mathbb{Z}) \to H^n(X_t, \mathbb{Z}) \overset{x \to (x, \delta)} \to \mathbb{Z} \to H^{n+1}(X_0, \mathbb{Z}) \to H^{n+1}(X_t, \mathbb{Z}) \to 0$$ (where $(\cdot, \cdot)$ is the duality pairing). For $i \neq n$, the monodromy $T$ is the identity and for $i=n$ we have $$Tx=x \pm (x, \delta)\delta.$$ 
The sign $\pm$ and the values of $T\delta$ and $(\delta, \delta)$ are as follows. 

\begin{center}
\begin{tabular}{ c c c c c c}
 \textbf{Table 1} & $n \mod 4$ & 0 & 1 & 2 & 3 \\ &
 $Tx=x\pm(x, \delta)\delta$ & - & - & + & + \\ &  
$(\delta, \delta)$ & 2 & 0 & -2 & 0 \\ & $T\delta$ & $-\delta$ & $\delta$ & $-\delta$ & $\delta$    
\end{tabular}
\end{center}
The monodromy transformation preserves the intersection form\footnote{It is a symplectic transvection for $n$ odd and the symmetric ortogonal for $n$ even.} $Tr(x \cup y)$ on $H^n(X_t, \mathbb{Z})$.

\textbf{$l$-adic Lefschetz results} are obtained as follows.

Replace the disk $D$ with the \textit{spectrum of a Henselian discrete valuation ring $A$ with an algebraically closed residue field}. Let $S$ be the spectrum, $\eta$ its generic point (spectrum of the field of fractions of $A$), $s$ the closed point (spectrum of the residue field). The role of $t$ is played by the geometric generic point $\bar \eta$ (spectrum of the closure of the field of fractions of $A$). Let $f: X \to S$ be a morphism of schemes. Assume that:

\textbf{(a)} $X$ is regular purely of dimension $n+1$.

\textbf{(b)} $f$ is proper.

\textbf{(c)} $f$ is smooth on the complement of the point $x$ of the special (singular) fiber $X_s$.

\textbf{(d)} $x$ is an \textit{ordinary double point}\footnote{A point $P$ of a variety $X$ of dimension $d$ is called an \textit{ordinary double point} if for the completion of the local ring $\mathcal{\hat O}_{X, P}$ one has $\mathcal{\hat O}_{X, P} \simeq k[[t_1, \cdots t_{d+1}]]/(Q(t_1, \cdots, t_{d+1}))$ where $Q(t_1, \cdots, t_{d+1})$ is a nondegenerate quadratic form.}.

Let $l$ be a prime number different from the characteristic of the residue field of $A$. Denote by $X_{\bar \eta}$ the generic geometric fiber. To this data we associate:

\textbf{($\alpha$)} The \textit{specialization morphism} $$sp: H^i(X_s, \mathbb{Q}_l) \overset{\sim} \leftarrow H^i(X, \mathbb{Q}_l) \to H^i(X_{\bar \eta}, \mathbb{Q}_l).$$ 

\textbf{($\beta$)} The \textit{local monodromy} action of the inertia group $I=Gal(\bar \eta/ \eta)$ on $H^i(X_{\bar \eta}, \mathbb{Q}_l)$: $$I=Gal(\bar \eta/\eta) \to GL(H^i(X_{\bar \eta}, \mathbb{Q}_l)).$$ Note that ($\alpha$) and ($\beta$) completely determine the sheaf $R^i f_*\mathbb{Q}_l$ on $S$. 

Let $n=2m$ for $n$ even and $n=2m+1$ for $n$ odd. We define the \textit{vanishing cycle}  $$\delta \in H^n(X_{\bar \eta}, \mathbb{Q}_l)(m)$$ as the unique (up to sign) generator of $$H^n(X_s, \mathbb{Q}_l)^{\perp} \subset H^n(X_{\bar \eta}, \mathbb{Q}_l)(m)$$ under the pairing induced by Poincare duality. 

Lefschetz theory describes ($\alpha$) and ($\beta$) in terms of $\delta$. For $i \neq n, \ n+1$ we have $$H^i(X_s, \mathbb{Q}_l) \overset{\sim} \to H^i(X_{\bar \eta}, \mathbb{Q}_l) \ (i \neq n, \ n+1)$$  and for $i=n, \ n+1$ there is an exact sequence $$0 \to H^n(X_s, \mathbb{Q}_l) \to H^n(X_{\bar \eta}, \mathbb{Q}_l) \overset{x \to Tr(x \cup \delta)} \to \mathbb{Q}_l(m-n) \to H^{n+1}(X_s, \mathbb{Q}_l) \to H^{n+1}(X_{\bar \eta}, \mathbb{Q}_l) \to 0.$$

The local monodromy action is trivial for $i \neq n$. For $i=n$ it is described as follows.

\textbf{(A) n odd} There is a canonical homomorphism $$t_l: I \to \mathbb{Z}_l(1),$$ and the action of $\sigma \in I$ is given by $$x \to x\pm t_l(\sigma)(x, \delta)\delta.$$

\textbf{(B) n even} We will not need this case. If $p \neq 2$, there exists a unique character of order two $$\varepsilon: I \to \{\pm1\},$$ and we have 
\begin{center}
\begin{tabular}{ c c c }
 $\sigma x=x$ & if & $\varepsilon(\sigma)=1$ \\ $\sigma x=x\pm(x, \delta)\delta$ & if & $\varepsilon(\sigma)=-1$    
\end{tabular}
\end{center}
The signs $\pm$ in (A) and (B) are the same as in Table 1.

\begin{fact}
The results above imply the following information about $R^i f_* \mathbb{Q}_l$.

\textbf{(a)} If $\delta \neq 0$:

\ \ \ \ 1) For $i \neq n$ the sheaf $R^if_*\mathbb{Q}_l$ is constant.

\ \ \ \ 2) Let $j$ be the inclusion of $\eta$ in $S$. We have $$R^if_*\mathbb{Q}_l=j_* j^* R^if_*\mathbb{Q}_l \ \footnote{Where by $"="$ we mean that the canonical morphism $R^if_*\mathbb{Q}_l \to j_* j^* R^if_*\mathbb{Q}_l$ is an isomorphism.}.$$

\textbf{(b)} If $\delta=0$ ( since $(\delta, \delta)=\pm 2$ for $n$ even, this can only happen for $n$ odd):

\ \ \ \ 1) For $i \neq n+1$ the sheaf $R^if_*\mathbb{Q}_l$ is constant.

\ \ \ \ 2) Let $\mathbb{Q}_l(m-n)_s$ be the sheaf $\mathbb{Q}_l(m-n)$ on $\{s\}$ extended by zero on $S$. Then there is an exact sequence $$0 \to \mathbb{Q}_l(m-n)_s \to R^{n+1}f_* \mathbb{Q}_l \to j_* j^*R^{n+1}f_* \mathbb{Q}_l \to 0,$$ where $j_* j^*R^{n+1}f_* \mathbb{Q}_l$ is a constant sheaf.
\end{fact}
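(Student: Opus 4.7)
The plan is to exploit the fact that $S$ is the spectrum of a Henselian DVR with algebraically closed residue field, so that a constructible $\mathbb{Q}_l$-sheaf $\mathcal{G}$ on $S$ is completely determined by the triple $(\mathcal{G}_s, \mathcal{G}_{\bar\eta}, sp)$, where $\mathcal{G}_{\bar\eta}$ carries a continuous action of the inertia group $I = \mathrm{Gal}(\bar\eta/\eta)$ and the specialization map $sp: \mathcal{G}_s \to \mathcal{G}_{\bar\eta}$ lands in the invariants $\mathcal{G}_{\bar\eta}^I$. Under this dictionary, $\mathcal{G}$ is constant iff $sp$ is an isomorphism and $I$ acts trivially, and the canonical map $\mathcal{G} \to j_* j^* \mathcal{G}$ is an isomorphism iff $sp: \mathcal{G}_s \overset{\sim}{\to} \mathcal{G}_{\bar\eta}^I$. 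By proper base change the stalks of $R^i f_* \mathbb{Q}_l$ are $H^i(X_s, \mathbb{Q}_l)$ and $H^i(X_{\bar\eta}, \mathbb{Q}_l)$, with specialization and monodromy exactly the data $(\alpha), (\beta)$ described just above the proposition. The whole statement is therefore a direct translation of the local Lefschetz results into this dictionary.

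For (a), assume $\delta \neq 0$. The monodromy formulas in (A) and (B) give trivial action for $i \neq n$. The displayed exact sequence shows $sp$ is an isomorphism for $i \neq n, n+1$, and for $i = n+1$ the map $x \mapsto Tr(x \cup \delta)$ is surjective (its dual, $1 \mapsto \delta$, is injective precisely because $\delta \neq 0$), so $sp$ is also an isomorphism in degree $n+1$. Combined, this gives (a)(1) and, together with the trivial monodromy, also yields (a)(2) for $i \neq n$. The only remaining case is $i = n$: from the exact sequence, the image of $H^n(X_s)$ in $H^n(X_{\bar\eta})$ is precisely $\delta^{\perp}$, while the explicit monodromy $x \mapsto x \pm t_l(\sigma)(x,\delta)\delta$ (for $n$ odd, with $t_l$ surjective) and its analog for $n$ even via the character $\varepsilon$ show that an element is $I$-fixed iff $(x,\delta) = 0$. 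So image equals invariants and (a)(2) holds.

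For (b), when $\delta = 0$, the monodromy formulas are identically trivial for every $i$, and the pairing term in the Lefschetz exact sequence vanishes, so that sequence degenerates to $H^n(X_s) \overset{\sim}{\to} H^n(X_{\bar\eta})$ together with a short exact sequence $0 \to \mathbb{Q}_l(m-n) \to H^{n+1}(X_s) \to H^{n+1}(X_{\bar\eta}) \to 0$. For $i \neq n+1$ this yields isomorphic stalks with trivial monodromy, hence $R^i f_* \mathbb{Q}_l$ is constant. For $i = n+1$, the kernel of $sp$ lives only at $s$ and equals $\mathbb{Q}_l(m-n)$, which is exactly the stalk data of the skyscraper $\mathbb{Q}_l(m-n)_s$; the quotient has specialization the identity and trivial monodromy, hence is constant and equals $j_* j^* R^{n+1} f_* \mathbb{Q}_l$, producing the desired short exact sequence.

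The main obstacle is the equality ``image of $sp$ $=$ invariants under $I$'' in (a)(2) for $i = n$. Everything else is a direct bookkeeping translation between (stalks, specialization, monodromy) and sheaves on $S$, but identifying $\delta^{\perp}$ with $H^n(X_{\bar\eta})^I$ requires the concrete structure of the monodromy as a Picard--Lefschetz transvection, the surjectivity of $t_l: I \to \mathbb{Z}_l(1)$, and in the $n$-even case the nontriviality of $\varepsilon$. Once this is in hand, all four assertions follow mechanically.
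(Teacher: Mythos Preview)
Your proposal is correct and is exactly the argument the paper has in mind: the paper does not spell out a proof but simply declares that ``the results above imply'' the proposition, having already remarked that the data $(\alpha)$ and $(\beta)$ (specialization and monodromy) completely determine $R^i f_* \mathbb{Q}_l$ on $S$. You have faithfully unpacked that remark via the standard dictionary for constructible $\mathbb{Q}_l$-sheaves on the Henselian trait, and your case analysis---in particular the identification of $\mathrm{im}(sp)$ with $H^n(X_{\bar\eta})^I = \delta^\perp$ in part (a)(2) and the reading of the kernel of $sp$ in degree $n+1$ as the skyscraper in part (b)(2)---is precisely what is needed.
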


\subsection{Global Lefschetz theory}

\textbf{Lefschetz results on $\mathbb{C}$} are as follows.

Let $\mathbb{P}$ be a projective space of dimension $\geq 1$ and $ \mathbb{\widecheck P}$ the dual projective space. The points of $ \mathbb{\widecheck P}$ parameterize the hyperplanes of $\mathbb{P}$ and we denote by $H_t$ the hyperplane defined by $t \in \mathbb{\widecheck P}$. If $A$ is  a linear subspace of codimension 2 in $\mathbb{P}$, hyperplanes containing $A$ are parameterized by the points of the projective line $D \subset \mathbb{\widecheck P}$, the \textit{dual} of $A$. We say that the set of hyperplanes $\{ H_t \}_{t \in D}$ forms the \textit{pencil with axis} $A$. 

Let $X \subset \mathbb{P}$ be a connected nonsingular projective variety of dimension $n+1$ and $\tilde X \subset X \times D$ be the set of pairs $(x, t)$ such that $x \in H_t$. Projections to the first and second coordinates form the diagram $$\begin{tikzcd}
X \arrow[r, leftarrow, "\pi"]
& \tilde X \arrow[d, "f"]\\
& D
\end{tikzcd} \ \ \textbf{(8)}$$ Denote by $X_t=X \cap H_t$ the fiber of $f$ at $t \in D$ (it is a hyperplane section of $X$). \textbf{For $A$ general enough we have:}

\textbf{(A)} $A$ is transverse to $X$ and $\tilde X$ is the blowing up of $X$ along $A \cap X$. In particular, $\tilde X$ is nonsingular.

\textbf{(B)} There is a finite subset $S$ of $D$ and for each $s \in S$ a point $x_s \in X_s$ such that $f$ is smooth outside the $\{x_s \}_{s \in S}$.

\textbf{(C)} All the $\{x_s \}_{s \in S}$ are critical nondegenerate points of $f$. 

So we see that \textbf{for each} $s \in S$ the \textbf{local Lefschetz theory} of the previous subsection \textbf{applies to a small disk $D_s$ around $s$ and $f^{-1}(D_s)$.}

Let $U=D \setminus S$, $u \in U$ and $\{ \gamma_s \}_{s \in S}$ be disjoint loops that start from $u$ and turn once around $s$. Then $\{ \gamma_s \}_{s \in S}$ generate the fundamental group $\pi_1(U, u)$. As in the local case, we have a \textit{monodromy action} $$\pi_1(U, u) \to Gl(H^i(X_u, \mathbb{Z})),$$ where we see $H^i(X_u, \mathbb{Z})$ as the stalk of $R^if_*\mathbb{Z}| U$ at $u$. According to the local theory, for each $s \in S$ we have a vanishing cycle $\delta_s \in H^n(X_u, \mathbb{Z})$. For $i \neq n$, the action of $\pi_1(U, u)$ on $H^i(X_u, \mathbb{Z})$ is trivial and for $i=n$ it "deforms the cohomology by vanishing cycles" and we have $$\gamma_s x=x \pm (x, \delta_s)\delta_s \ \ \textbf{(The Picard-Lefschetz formula).}$$

We define the \textit{vanishing part} $E$ of the cohomology as a subspace $$E \subset H^n(X_u, \mathbb{Q})$$ generated by all the $\{ \delta_s \}_{s \in S}$. It is easy to prove the following proposition.  

\begin{fact}
$E$ is stable under the action of the group $\pi_1(U, u)$ (the monodromy group). The ortogonal $E^{\perp}$ of $E$ (for the intersection form $Tr(x \cup y)$) is the space of invariants of $\pi_1(U, u)$ in $H^n(X_u, \mathbb{Q})$. 
\end{fact}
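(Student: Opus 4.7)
The proposition is essentially a direct corollary of the Picard–Lefschetz formula together with nondegeneracy of the intersection pairing on $H^n(X_u,\mathbb{Q})$ (Poincar\'e duality for the smooth projective variety $X_u$ of dimension $n$). The plan is to handle the two assertions separately, and in both cases use the generating system $\{\gamma_s\}_{s\in S}$ of $\pi_1(U,u)$ so that it suffices to check everything on the generators.

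For stability of $E$, I would observe that by definition $E$ is spanned over $\mathbb{Q}$ by the vanishing cycles $\{\delta_s\}_{s\in S}$, so it is enough to verify $\gamma_s\delta_{s'}\in E$ for all $s,s'\in S$. The Picard–Lefschetz formula gives
\[
\gamma_s\delta_{s'}=\delta_{s'}\pm(\delta_{s'},\delta_s)\delta_s,
\]
which lies in $E$ by construction. Since the $\gamma_s$ generate $\pi_1(U,u)$, every element of the monodromy group sends a basis of $E$ into $E$, so $E$ is stable.

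For the identification $E^\perp = H^n(X_u,\mathbb{Q})^{\pi_1(U,u)}$, I would again reduce everything to the generators. If $x\in E^\perp$, then $(x,\delta_s)=0$ for all $s\in S$, so Picard–Lefschetz gives $\gamma_s x = x\pm(x,\delta_s)\delta_s = x$; as the $\gamma_s$ generate, $x$ is invariant under $\pi_1(U,u)$. Conversely, if $x$ is invariant, then for each $s$ the relation $\gamma_s x = x$ combined with Picard–Lefschetz yields $(x,\delta_s)\delta_s=0$, which forces $(x,\delta_s)=0$ whenever $\delta_s\neq 0$ (and is automatic otherwise). Hence $x$ is orthogonal to every generator of $E$, i.e.\ $x\in E^\perp$.

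There is no real obstacle here: the only point requiring any care is that the intersection form $Tr(x\cup y)$ is a well-defined nondegenerate symmetric/skew pairing on $H^n(X_u,\mathbb{Q})$, so that the notation $E^\perp$ is unambiguous; this is Poincar\'e duality on the smooth projective fiber $X_u$. The argument will also transpose verbatim to the $\ell$-adic setting once the Picard–Lefschetz formula is replaced by its $\ell$-adic analogue stated in (A) of Subsection 6.1, which will be the form actually needed later.
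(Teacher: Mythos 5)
Your proof is correct and follows essentially the same route as the paper: both assertions are read off directly from the Picard--Lefschetz formula, with the key observations that $\gamma_s\delta_{s'}=\delta_{s'}\pm(\delta_{s'},\delta_s)\delta_s\in E$ gives stability and that $\gamma_s x - x = \pm(x,\delta_s)\delta_s$ gives the equivalence ``$x$ invariant $\iff$ $x\perp\delta_s$ for all $s$,'' using that the $\gamma_s$ generate the monodromy group. The extra remarks about nondegeneracy of the intersection form and the case $\delta_s=0$ are sound and make the argument slightly more explicit than the paper's, but they do not change the substance.
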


\begin{proof}
Both statements follow from the Picard-Lefschetz formula. If we substitute $\delta_{s'}$ for $x$, we will have $$\gamma_s \delta_{s'}=\delta_{s'} \pm (\delta_{s'}, \delta_s)\delta_s$$ that implies the first statement and $$\gamma_s x-x= \pm (x, \delta_s)\delta_s$$ implies the second
(because $\gamma_s$ generate the monodromy group).
\end{proof}

\newpage

The proof of the following proposition is a nice geometric argument. 

\begin{fact}
The vanishing cycles $\pm \delta_s$ are conjugate (up to sign)\footnote{That is, given $s, \ s' \in S$, there exists a $\sigma \in \pi_1(U, u)$ such that $\sigma \delta_s=\pm \delta_s$.} under the action of $\pi_1(U, u)$. 
\end{fact}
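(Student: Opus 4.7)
The plan is to derive the transitivity of the monodromy action on the set $\{\pm \delta_s\}_{s \in S}$ from the irreducibility of the dual variety $\check X \subset \check{\mathbb{P}}$. Recall that $\check X$ parameterizes singular hyperplane sections of $X$, and that its smooth points correspond precisely to hyperplane sections with a single ordinary double point. Because $X$ is irreducible, the standard incidence variety $I = \{(x, H) \in X \times \check{\mathbb{P}} : H \text{ is tangent to } X \text{ at } x\}$ is irreducible (its projection to $X$ has projective-space fibers), hence so is its image $\check X$. For the pencil axis $A$ chosen generically (conditions (A)--(C) above), the line $D \subset \check{\mathbb{P}}$ meets $\check X$ transversally at exactly the points of $S$, each of which is a \emph{smooth} point of $\check X$.

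First I would reduce the statement to the conjugacy of vanishing cycles inside the larger fundamental group $\pi_1(\check{\mathbb{P}} \setminus \check X, u)$. A Zariski--Lefschetz-type theorem for fundamental groups of complements of hypersurfaces implies that, for a sufficiently general pencil, the inclusion $U = D \setminus S \hookrightarrow \check{\mathbb{P}} \setminus \check X$ induces a surjection
$$\pi_1(U, u) \twoheadrightarrow \pi_1(\check{\mathbb{P}} \setminus \check X, u).$$
It therefore suffices to conjugate $\delta_s$ and $\delta_{s'}$ (up to sign) by some element of the target group; its preimage under the surjection then yields the required $\sigma \in \pi_1(U, u)$.

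Next I would show that any two vanishing cycles $\pm \delta_{s_0}, \pm \delta_{s_0'}$ associated to smooth points $s_0, s_0' \in \check X^{\mathrm{sm}}$ are conjugate up to sign under $\pi_1(\check{\mathbb{P}} \setminus \check X, u)$. Since $\check X$ is irreducible, $\check X^{\mathrm{sm}}$ is connected, so one can choose a path $\alpha$ in $\check X^{\mathrm{sm}}$ from $s_0$ to $s_0'$. At each point of $\alpha$ the local Lefschetz picture of Subsection~6.1 applies uniformly (a single ordinary double point), so one gets a continuously varying vanishing cycle in the nearby cohomology along $\alpha$. A small tubular loop around $\alpha$ inside $\check{\mathbb{P}} \setminus \check X$ then transports a loop encircling $s_0$ to a loop encircling $s_0'$; by the Picard--Lefschetz formula applied at both endpoints, conjugating the local monodromy generator at $s_0$ by this loop gives, up to sign, the local monodromy generator at $s_0'$. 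Reading the action on $H^n(X_u, \mathbb{Z})$ yields $\sigma \delta_{s_0} = \pm \delta_{s_0'}$ for some $\sigma \in \pi_1(\check{\mathbb{P}} \setminus \check X, u)$.

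The main obstacle is justifying the "parallel transport along $\alpha$" step: one must see that the local picture deforms continuously along a smooth path in $\check X$ so that conjugation by the tubular loop really carries one Picard--Lefschetz transvection to another, with only a controlled $\pm$ ambiguity coming from the sign indeterminacy of $\delta$. This is precisely why we restricted to \emph{smooth} points of $\check X$ and why a generic pencil is needed. The auxiliary surjectivity $\pi_1(U) \twoheadrightarrow \pi_1(\check{\mathbb{P}} \setminus \check X)$ for generic $D$ is the other nontrivial input and is a standard consequence of the Zariski hyperplane section theorem for fundamental groups of complements.
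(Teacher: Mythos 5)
Your proposal matches the paper's proof in all essential respects: both reduce to conjugacy in $\pi_1(\mathbb{\widecheck P} \setminus \widecheck X)$ via the Lefschetz/Zariski surjectivity for a general pencil, use irreducibility of $\widecheck X$ (hence connectedness of its smooth locus) to conjugate the local meridian loops $\gamma_s$, and translate conjugacy of these loops into conjugacy of the vanishing cycles up to sign via the Picard--Lefschetz formula $\gamma\gamma_s\gamma^{-1}x = x \pm (x, \gamma\delta_s)\gamma\delta_s$. Your ``tubular loop along $\alpha$'' picture is just a more explicit geometric phrasing of the paper's observation that the conjugacy class of a small loop around $\widecheck X$ near a smooth point $x$ does not depend on $x$.
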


\begin{proof}
We define \textit{the dual variety} $\widecheck X$ of $X$ as the set of $t \in \mathbb{\widecheck P}$ such that $H_t$ is tangent to $X$ (that is, $X_t$ is singular or $X \subset H_t$). Then $\widecheck X$ is irreducible. Let $Y \subset X \times \mathbb{\widecheck P}$ be the space of pairs $(x, t)$ such that $x \in H_t$, then we have a diagram $$\begin{tikzcd}
X \arrow[r, leftarrow]
& Y \arrow[d, "g"]\\
& \mathbb{\widecheck P}
\end{tikzcd}$$ The fiber of $g$ at $t \in \mathbb{\widecheck P}$ is the hyperplane section $X_t= X \cap H_t$ of $X$ and $g$ is smooth on the complement of the inverse image of $\widecheck X$. 

Replacing $\mathbb{\widecheck P}$ by the line $D \subset \mathbb{\widecheck P}$ and $Y$ by $g^{-1}(D)$ we retrieve the situation of (8). We have $S=D \cap \widecheck X$ and, by a result of Lefschetz, for $D$ general enough $$\pi_1(D \setminus S, u) \to \pi_1(\mathbb{\widecheck P} \setminus \widecheck X, u)$$ is surjective so it suffices to show that $\pm \delta_s$ are conjugate under $\pi_1(\mathbb{\widecheck P} \setminus \widecheck X)$. 

We will first show that $\{ \gamma_s \}_{s \in S}$ are conjugate to each other. For $x$ in the smooth locus (of codimension $1$) of $\widecheck X$, let $ch$ be the path from $t$ to $x$ in $\mathbb{\widecheck P} \setminus \widecheck X$ and $\gamma_x$ the loop that follows $ch$ until the neighborhood of $\widecheck X$, turns once around $\widecheck X$ and then returns to $t$ by $ch$. The loops $\gamma_x$ (for various $ch$) are conjugate to each other in $\pi_1(\mathbb{\widecheck P} \setminus \widecheck X)$. 

We can always join two points in the smooth locus of $\widecheck X$ by a path that does not leave the smooth locus since $\widecheck X$ is irreducible. Therefore, the conjugation class of $\gamma_x$ does not depend on $x$ and in particular $\{ \gamma_s \}_{s \in S}$ are conjugate to each other. 

Proposition follows from the Picard-Lefschetz formula since $$\gamma_s x-x= \pm (x, \delta_s)\delta_s$$ determines $\delta_s$ up to sign and for $\gamma \in \pi_1(\mathbb{\widecheck P} \setminus \widecheck X)$ we have $$(\gamma \gamma_s \gamma^{-1})x=\gamma(\gamma^{-1} x \pm (\gamma^{-1}x, \delta_s)\delta_s)=x \pm (x, \gamma \delta_s)\gamma \delta_s.$$
\end{proof}

\begin{cor}
The action of $\pi_1(U, u)$ on $E/(E \cap E^{\perp})$ is absolutely irreducible\footnote{The action on the $k$-vector space $V$ is called absolutely irreducible if the corresponding action on $V \otimes_k \mathbb{C}$ is irreducible.}.
\end{cor}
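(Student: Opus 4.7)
The plan is to pass to $\mathbb{C}$-coefficients, let $V = E/(E \cap E^{\perp})$ with its induced non-degenerate pairing, pick an arbitrary nonzero $\pi_1(U,u)$-stable subspace $W \subset V_{\mathbb{C}} = V \otimes_{\mathbb{Q}} \mathbb{C}$, and show $W = V_{\mathbb{C}}$. The two essential inputs are the Picard-Lefschetz formula $\gamma_s x = x \pm (x, \delta_s)\delta_s$ (which descends to $V$ because the intersection form vanishes identically on $E \cap E^{\perp}$) and the conjugacy of the vanishing cycles $\pm \delta_s$ supplied by Proposition 8.

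First I would observe that the images $\bar{\delta}_s$ of the vanishing cycles in $V$ still span $V_{\mathbb{C}}$, since by construction they already span $E$ and hence its quotient. Then for any $\bar{x} \in W$ and any $s \in S$, the Picard-Lefschetz formula yields
\[
(\gamma_s - 1)\bar{x} = \pm (\bar{x}, \bar{\delta}_s)\, \bar{\delta}_s,
\]
which lies in $W$ by stability. This sets up a clean dichotomy: either there exist $\bar{x} \in W$ and $s \in S$ with $(\bar{x}, \bar{\delta}_s) \neq 0$, or the pairing $(\bar{x}, \bar{\delta}_s)$ vanishes identically on $W \times \{\bar{\delta}_s\}_{s \in S}$.

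In the first case, $\bar{\delta}_s \in W$ for some $s$; Proposition 8 provides, for every $s' \in S$, an element $\sigma \in \pi_1(U,u)$ with $\sigma \bar{\delta}_s = \pm \bar{\delta}_{s'}$, so stability forces every $\bar{\delta}_{s'} \in W$, and the spanning statement gives $W = V_{\mathbb{C}}$. In the second case, $W$ is orthogonal to all $\bar{\delta}_{s'}$, hence (since these span $V_{\mathbb{C}}$) to all of $V_{\mathbb{C}}$; but the pairing on $V$ was made non-degenerate precisely by quotienting $E$ by its radical $E \cap E^{\perp}$, so $W \subset 0$, contradicting $W \neq 0$.

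The only step that requires care — and what I would regard as the main (mild) obstacle — is checking that Picard-Lefschetz descends cleanly to $V$ and that the induced pairing on $V$ is honestly non-degenerate; once these are written down, the rest of the argument is the short dichotomy above. No additional geometric input beyond Propositions 7 and 8 is needed, so the proof reduces to this linear-algebra dichotomy powered by conjugacy of vanishing cycles.
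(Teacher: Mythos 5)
Your proof is correct and takes essentially the same approach as the paper: both proofs use the Picard--Lefschetz formula plus Proposition 8 (conjugacy of vanishing cycles up to sign) to show that a stable subspace containing (or pairing nontrivially with) one $\delta_s$ must contain them all and hence everything. The only surface difference is that the paper phrases the argument upstairs — a monodromy-stable $F \subset E \otimes \mathbb{C}$ not contained in $(E \cap E^{\perp}) \otimes \mathbb{C}$ must equal $E \otimes \mathbb{C}$ — and leaves the passage to the quotient implicit, whereas you argue directly in $V_{\mathbb{C}} = (E/(E\cap E^{\perp})) \otimes \mathbb{C}$, spelling out the descent of the Picard--Lefschetz relation and the non-degeneracy of the induced form; your version is a bit more explicit but not materially different.
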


\begin{proof}
Let $F \subset E \otimes \mathbb{C}$ be the subspace stable under the monodromy. If $F \not\subset (E \cap E^{\perp}) \otimes \mathbb{C}$, there exists an $x \in F$ and $s \in S$ such that $(x, \delta_s) \neq 0$ and we have $$\gamma_s x-x=\pm (x, \delta_s)\delta_s \in F.$$ So $\delta_s \in F$ and by Proposition 8 all the $\{ \delta_s \}_{s \in S}$ lie in $F$, so $F=E$.
\end{proof}

\textbf{$l$-adic Lefschetz results} are obtained as follows. 

Let $\mathbb{P}$ be a projective space of dimension $>1$ over an algebraically closed field $k$ of characteristic $p$ and $X \subset \mathbb{P}$ a connected projective nonsingular variety of dimension $n+1$. For $A$ a linear subspace of codimension 2 we define $D$, the pencil $\{ H_t \}_{t \in D}$, $\tilde X$ and the diagram (8) as in the complex case. 

\begin{defn}$\{ H_t \}_{t \in D}$ form a \textit{Lefschetz pencil} of hyperplane sections for $X$ if:

\textbf{(A)} The axis $A$ is transverse to $X$. $\tilde X$ is obtained by blowing up $X$ along $A \cap X$ and is smooth. 

\textbf{(B)} There is a finite subset $S$ of $D$ and for each $s \in S$ a point $x_s \in X_s$ such that $f$ is smooth outside the $\{x_s \}_{s \in S}$.

\textbf{(C)} All the $\{ x_s \}_{s \in S}$ are ordinary double singular points of $f$. 
\end{defn}

Then we see that \textbf{for each $s \in S$ the local Lefschetz theory} of the previous subsection \textbf{applies to the spectrum $D_s$ of the Henselization of the local ring of $D$ at $s$ and to $\tilde X_{D_s}=\tilde X \times_D D_s$.} 

Unlike the complex case, however, \textbf{it might happen that no Lefschetz pencil for $X$ can be found.} However, we \textbf{can overcome this complication} by modifying the projective embedding $\iota_1: X \xhookrightarrow{} \mathbb{P}$ as follows.

Let $N$ be the dimension of $\mathbb{P}$, $r$ an integer $\geq 1$ and $\iota_{(r)}$ the embedding of $\mathbb{P}$ into the projective space of dimension ${N+r \choose N} -1$ such that its homogeneous coordinates are monomials of degree $r$ in the homogeneous coordinates of $\mathbb{P}$. The hyperplane sections of $\iota_{(r)}(\mathbb{P})$ correspond to hypersurfaces of degree $r$ of $\mathbb{P}$. Choose an $r \geq 2$ and replace $\iota_1$ by $\iota_r=\iota_{(r)} \circ \iota_1$. 

One can show that \textbf{in the new embedding any general enough pencil of hyperplane sections is  a Lefschetz pencil for $X$.} The proof follows from a careful consideration of the dual variety $\widecheck X$ by applying Bertini's theorem (see [2], 31.2 for the sketch and SGA 7, XVII 3.7. for complete proof).

We will now study the Lefschetz pencil for $X$, \textit{excluding the case $p=2$, n even}. The case of $n$ odd will suffice for our purposes. Let $U=D \setminus S$, $u \in U$ and $l \neq p$ a prime number. The local results of the previous subsection (see Proposition 6) imply that $R^n f_* \mathbb{Q}_l$ is \textit{tamely ramified} at each $s \in S$. We recall the definition of the \textit{tame fundamental group}.

Fix an algebraic closure $K^{al}$ of $K=k(D)$. Then we let $$\pi_1^{tame} (U, u)=Gal(K^{tame}/K)$$ where $K^{tame}$ is the composite of the subfields of $K^{al}$ that are unramified at all the primes corresponding to the points of $U$ and tamely ramified at those corresponding to the points of $S$. Note that $\pi_1^{tame} (U, u)$ contains a subgroup $I_s$ for every $s \in S$ and is generated by these subgroups. We will drop the upper script and denote this group by $\pi_1 (U, u)$ as before.

The Lefschetz results are transposed from the complex case by replacing the fundamental group $\pi_1(U, u)$ with the tame fundamental group. The algebraic situation is similar to the  situation over $\mathbb{C}$ and the translation is done by standard arguments\footnote{The tame fundamental group of $U$ is a quotient of the profinite completion of the corresponding fundamental groups over $\mathbb{C}$ (lifting to characteristic $0$ of the tame coverings and the Riemann existence theorem).}. The proofs of the analogs of Proposition 7 and Corollary 3 are similar and in the proof of Proposition 8 the surjectivity result of Lefschetz for the map between fundamental groups is replaced by a theorem of Bertiny (see [2], 32.3 for the sketch and [24] for complete proof). We summarize the results in the following proposition. 
\newpage
\begin{fact}
Under the assumptions above we have the following results:

\textbf{(a)} \textit{If the vanishing cycles are nonzero}: 

\ \ \ \ 1) For $i \neq n$ the sheaf $R^if_*\mathbb{Q}_l$ is constant.

\ \ \ \ 2) Let $j$ be the inclusion of $U$ in $D$. We have $$R^nf_*\mathbb{Q}_l=j_* j^* R^nf_*\mathbb{Q}_l.$$

\ \ \ \ 3) Let $E \subset H^n(X_u, \mathbb{Q}_l)$ be the subspace of the cohomology generated by the vanishing cycles. This subspace is stable under $\pi_1(U, u)$ and $$E^{\perp}=H^n(X_u, \mathbb{Q}_l)^{\pi(U, u)}.$$ The action of $\pi(U, u)$ on $E/(E \cap E^{\perp})$ is absolutely irreducible and the image of $\pi_1(U, u)$ in $GL(E/(E \cap E^{\perp}))$ is topologically generated\footnote{They generate a dense subgroup.} by the maps $x \to x\pm(x, \delta_s)\delta_s \ (s \in S)$ (the $\pm$ sign is determined as in Table 1). 

\textbf{(b)} \textit{If the vanishing cycles are zero} (since $(\delta, \delta)=\pm 2$ for $n$ even, it can only happen for odd  $n=2m+1$):

\ \ \ \ 1) For $i \neq n+1$ the sheaf $R^if_*\mathbb{Q}_l$ is constant.

\ \ \ \ 2) We have an exact sequence $$0 \to \underset{s \in S} \oplus \mathbb{Q}_l(m-n)_s \to R^{n+1}f_*\mathbb{Q}_l \to \mathcal{F} \to 0$$ with $\mathcal{F}$ constant. 

\ \ \ \ 3) $E=0$.

Since all of the vanishing cycles are conjugate, there are no other possibilities.
\end{fact}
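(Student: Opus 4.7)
My plan is to transpose the complex Lefschetz theory to the $l$-adic setting by first restricting to $U$ and then locally extending at each $s \in S$. Over $U$ the morphism $f$ is smooth and proper, so $R^i f_* \mathbb{Q}_l | U$ is lisse and corresponds (via Proposition 1) to a continuous representation of $\pi_1(U, u)$ on the stalk $H^i(X_u, \mathbb{Q}_l)$. Because the fibers of $f$ degenerate only at ordinary double points, the local inertia at each $s$ acts tamely (a consequence of the local computation in Subsection 6.1), so the representation factors through $\pi_1^{tame}(U, u)$, which is topologically generated by the tame inertia subgroups $I_s$, each acting by the local Picard--Lefschetz formula from Proposition 6.

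From this set-up I would derive (a.1), (a.2), (b.1) and (b.2) uniformly. For degrees $i \neq n$ in case (a) and $i \neq n+1$ in case (b), the local monodromy at each $s$ is trivial by Proposition 6, so the global monodromy on the stalk is trivial and $R^i f_* \mathbb{Q}_l | U$ is constant; combined with the extension across each $s$ also provided by Proposition 6, this gives constancy on all of $D$. The remaining claims (a.2) and (b.2) can be checked on stalks, where at a point of $U$ they are trivial and at a point $s \in S$ they reduce directly to Proposition 6(a)(2) or (b)(2) respectively.

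Next, for (a.3), stability of $E$ and the equality $E^\perp = H^n(X_u, \mathbb{Q}_l)^{\pi_1(U, u)}$ follow immediately from the $l$-adic Picard--Lefschetz formula $\sigma x = x \pm t_l(\sigma)(x, \delta_s)\delta_s$ for $\sigma \in I_s$, exactly as in the complex proof of Proposition 7: an element is $\pi_1$-invariant iff it is orthogonal to every $\delta_s$. Absolute irreducibility of the action on $E/(E \cap E^\perp)$ follows from the argument of Corollary 3, provided we know that all vanishing cycles $\delta_s$ are $\pi_1(U, u)$-conjugate up to sign, which is also what makes cases (a) and (b) mutually exclusive and exhaustive (either all $\delta_s$ vanish or none do). Part (b.3) is then immediate.

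The main obstacle is therefore reproducing Proposition 8, the conjugacy of the $\delta_s$. In the complex setting this rested on Lefschetz's theorem that $\pi_1(D \setminus S, u) \to \pi_1(\widecheck{\mathbb{P}} \setminus \widecheck X, u)$ is surjective for a generic line $D$, reducing the conjugacy to irreducibility of the dual variety. The $l$-adic analog replaces this by a Bertini-type surjectivity result for tame fundamental groups (cited from [24]); this is the genuinely nontrivial input and it is precisely what forces the preliminary step of passing to a Veronese re-embedding $\iota_{(r)}$ in order to guarantee that a sufficiently general pencil is Lefschetz. Once Bertini's theorem is granted, the geometric argument of Proposition 8 transcribes verbatim: loops turning once around the smooth locus of the irreducible $\widecheck X$ are mutually conjugate, so the vanishing cycles they produce via Picard--Lefschetz are conjugate up to sign, completing the proposition.
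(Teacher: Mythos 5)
Your proposal is correct and follows the same route the paper sketches: pass to the tame fundamental group, invoke the local $l$-adic Picard--Lefschetz formula (Proposition 6) stalk by stalk and at each $s\in S$ to get (a.1), (a.2), (b.1), (b.2), transpose the complex arguments for Propositions 7 and Corollary 3 to obtain (a.3), and isolate the conjugacy of vanishing cycles (the analog of Proposition 8, where Lefschetz's $\pi_1$-surjectivity is replaced by the Bertini-type theorem of [24]) as the one genuinely nontrivial input and as the reason the dichotomy in (a)/(b) is exhaustive. This matches the paper, which itself only gives a summary with the same references.
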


\subsection{The theorem of Kajdan and Margulis}
The subspace $E \cap E^{\perp}$ of $E$ is the kernel of the restriction to $E$ of the intersection form $Tr(x \cup y)$. Therefore, this form induces a bilinear nondegenerate form $$\psi: E/(E \cap E^{\perp}) \otimes E/ (E \cap E^{\perp}) \to \mathbb{Q}_l(-n),$$ preserved by the monodromy action, skew-symmetric for $n$ odd and symmetric for $n$ even. For $n$ odd, therefore, the monodromy action gives a map $$\rho: \pi_1(U, u) \to Sp(E/(E \cap E^{\perp}), \psi).$$

\begin{thrm14}
The image of $\rho$ is open.
\end{thrm14}

Intuitively, the theorem states that "\textit{the monodromy is big}". 

\begin{proof}
We will need some theory of Lie groups over $\mathbb{Q}_l$ (consult [21] or assume by analogy). The image of $\rho$ is closed so it is a Lie group over $\mathbb{Q}_l$ ($l$-adic analog of Cartan's theorem). Let $\mathfrak{L}$ be its Lie algebra\footnote{In the complex case we would have the Lie algebra of the Zariski closure of the monodromy group.}. Since there is an exponential map that sends any sufficiently small neighborhood of $0$ in a Lie algebra onto the neighborhood of $1$ of its Lie group, it suffices to show that $\mathfrak{L}$ equals $\mathfrak{sp}(E/(E \cap E^{\perp}), \psi)$.  

Consider the map $\log: Im(\rho) \to \mathfrak{L}$ (defined in the neighborhood of 1) and let $$\alpha_s: x \to x \pm \psi(x, \delta_s)\delta_s \ \ (s \in S).$$ Then $$\log(\alpha_s)=\log(1-(1-\alpha_s))=-\sum_n \frac{(1-\alpha_s)^n}{n}.$$
But $(1-\alpha_s)(x)=\pm \psi(x, \delta_s)\delta_s$. Because of skew-symmetry we have $\psi(\delta_s, \delta_s)=0$, so we see that $(1-\alpha_s)^2=0$ and $\mathfrak{L}$ contains all the endomorphisms $$N_s: x \to \psi(x, \delta_s)\delta_s \ \ (s \in S)$$ and is generated by them. Moreover, $E/(E \cap E^{\perp})$ is an absolutely irreducible representation of $\mathfrak{L}$. We obtain the theorem by applying the following lemma.
\end{proof}

\begin{lemma}
Let $V$ be a finite dimensional vector space over the field $k$ of characteristic $0$, $\psi$ a nondegenerate skew-symmetric form on a Lie subalgebra $\mathfrak{L}$ of the Lie algebra $\mathfrak{sp}(V, \psi)$. Assume: 

\ \ \ \ (i) $V$ is a simple representation of $\mathfrak{L}$. 

\ \ \ \ (ii) $\mathfrak{L}$ is generated by the family of endomorphisms of $V$ of the form $x \to \psi(x, \delta)\delta$. \\
Then $\mathfrak{L}=\mathfrak{sp}(V, \psi)$.
\end{lemma}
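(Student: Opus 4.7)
The plan is to transport the problem to $\mathrm{Sym}^2 V$ via the canonical $\mathfrak{sp}(V,\psi)$-equivariant isomorphism
\[
\phi : \mathrm{Sym}^2 V \xrightarrow{\sim} \mathfrak{sp}(V,\psi), \quad u \cdot v \mapsto M_{u,v}, \ M_{u,v}(x) = \psi(x,u)v + \psi(x,v)u,
\]
under which $\phi(\delta \cdot \delta) = 2\,N_\delta$. Here the source carries the derivation action $a \cdot (u \cdot v) = (au)\cdot v + u \cdot (av)$ and the target the adjoint action of $\mathfrak{sp}(V,\psi)$. The preimage $T := \phi^{-1}(\mathfrak{L}) \subset \mathrm{Sym}^2 V$ is therefore a Lie subalgebra stable under the derivation action of $\mathfrak{L}$. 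Setting $C := \{\delta \in V : N_\delta \in \mathfrak{L}\}$, hypothesis (ii) says exactly that $T$ is generated, as a Lie subalgebra of $\mathrm{Sym}^2 V$, by the squares $\{\delta\cdot\delta : \delta \in C\}$, and the task becomes to prove $T = \mathrm{Sym}^2 V$.

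Two explicit bracket computations are the engine of the proof: $[N_u, N_v] = -\psi(u,v)\,M_{u,v}$, and $[a, N_\delta] = M_{\delta,\,a\delta}$ for every $a \in \mathfrak{sp}(V,\psi)$ (the latter being the derivation identity $a\cdot(\delta\cdot\delta) = 2\,\delta\cdot(a\delta)$ transported through $\phi$). The first yields $M_{u,v} \in \mathfrak{L}$ whenever $u,v \in C$ and $\psi(u,v)\neq 0$; the second yields $M_{\delta,a\delta} \in \mathfrak{L}$ for all $a \in \mathfrak{L}$ and $\delta \in C$. Hypothesis (i) enters through two consequences. First, $\mathfrak{L}$ is semisimple: its centre acts on the simple module $V$ by scalars (Schur), but elements of $\mathfrak{L} \subset \mathfrak{sp}(V,\psi)$ are traceless, so these scalars vanish and $Z(\mathfrak{L})=0$; a Lie algebra in characteristic zero with a faithful simple representation and trivial centre is semisimple. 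Second, the subspace $\mathrm{span}(C) \subset V$ is nonzero and $\mathfrak{L}$-stable (since $N_\gamma(\delta) = \psi(\delta,\gamma)\gamma \in \mathrm{span}(C)$ and the $N_\gamma$ generate $\mathfrak{L}$), so $\mathrm{span}(C) = V$ by simplicity.

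To finish I would show that $\mathfrak{L}$ is a Lie ideal of the simple Lie algebra $\mathfrak{sp}(V,\psi)$; since $\mathfrak{L} \neq 0$, this forces $\mathfrak{L} = \mathfrak{sp}(V,\psi)$. By Jacobi and the generation of $\mathfrak{L}$ by $\{N_\delta\}_{\delta \in C}$, being an ideal reduces to $[\mathfrak{sp}(V,\psi),N_\delta]\subset \mathfrak{L}$ for every $\delta \in C$. A short calculation using $[M_{v,w},N_\delta] = \psi(v,\delta)M_{\delta,w} + \psi(w,\delta)M_{\delta,v}$ identifies $[\mathfrak{sp}(V,\psi),N_\delta]$ with the span of $\{M_{\delta,v}:v\in V\}$, so the task becomes showing that
\[
K' := \{\delta \in V : M_{\delta,v} \in \mathfrak{L}\ \text{for all}\ v \in V\}
\]
contains $C$. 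A direct derivation argument (if $\delta v \in T$ for all $v$, then $(a\delta)v = a\cdot(\delta v) - \delta\cdot(av) \in T$) shows $K'$ is an $\mathfrak{L}$-invariant subspace of $V$, so by simplicity it is either $0$ or $V$.

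The main obstacle is therefore the bootstrap: produce a single nonzero element of $K'$. The plan is to fix $\delta \in C$ and extend the statement "$M_{\delta,w}\in \mathfrak{L}$" from $w = \delta$ outward through $V = U(\mathfrak{L})\delta$ by induction, combining the two bracket identities in a two-sided manner: $[a, M_{\delta,w}] = M_{a\delta,w} + M_{\delta,aw} \in \mathfrak{L}$ lets one trade a derivative on one slot for a derivative on the other, and characteristic-zero polarization together with $\mathrm{span}(C)=V$ lets one recombine the resulting "mixed" elements $\delta\cdot(ab\cdots\delta)$ into arbitrary $\delta \cdot w$. Once $K' \neq 0$ is obtained, simplicity gives $K' = V$, $\mathfrak{L}$ is a nonzero ideal of $\mathfrak{sp}(V,\psi)$, and simplicity of $\mathfrak{sp}(V,\psi)$ yields $\mathfrak{L} = \mathfrak{sp}(V,\psi)$.
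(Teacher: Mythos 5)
Your proposal takes a route that is genuinely different from the paper's, but the route is not complete: the step you yourself flag as ``the main obstacle'' (producing a nonzero element of $K'$) is exactly where a real argument is missing. The difficulty is that the only tool you put on the table at that point is the Lie-algebra bracket identity $[a,M_{\delta,w}]=M_{a\delta,w}+M_{\delta,aw}$, and this always yields \emph{sums} of mixed terms. Concretely, starting from $\delta_0\cdot\delta_0\in T$ and applying $b$ then $a$ gives $\delta_0\cdot(ab\delta_0)+(a\delta_0)\cdot(b\delta_0)\in T$; antisymmetrizing in $a,b$ only recovers $\delta_0\cdot([a,b]\delta_0)\in T$, which you already knew. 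To separate the cross term you would need to know $(a\delta_0)\cdot(b\delta_0)\in T$, which in turn needs $a\delta_0,b\delta_0\in C$ --- but $C$ is a cone, not a subspace, and there is no reason for it to be $\mathfrak{L}$-stable a priori. Polarization and $\operatorname{span}(C)=V$ do not resolve this circularity, so as written the bootstrap does not close.

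The paper's proof avoids the issue by working one level up: for $\delta'\in W$ (your $C$), the element $N(\delta')$ is nilpotent, so $g=\exp(\lambda N(\delta'))$ is a genuine \emph{group} element preserving $\psi$, and since $N(\delta')\in\mathfrak{L}$, conjugation by $g$ preserves $\mathfrak{L}$. Hence $g$ is an automorphism of the whole triple $(V,\psi,\mathfrak{L})$, and $gN(\delta'')g^{-1}=N(g\delta'')$ forces $W$ to be stable under $g$. This gives $\delta''+\lambda\psi(\delta'',\delta')\delta'\in W$ for all $\lambda$, from which one concludes that $W$ is a union of pairwise $\psi$-orthogonal linear subspaces, each stable under all $N(\delta)$ and therefore under $\mathfrak{L}$; simplicity of $V$ then gives $W=V$ directly, and $\mathfrak{sp}(V,\psi)$ is generated by the $N(\delta)$. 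No ideal argument and no appeal to simplicity of $\mathfrak{sp}$ is needed. The exponential trick is the crucial mechanism that your Lie-algebra-only computations do not capture; if you want to salvage your ``ideal in the simple algebra $\mathfrak{sp}$'' strategy, you would still need to import this automorphism argument to first establish $C=V$, at which point the paper's shorter conclusion is already available.

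As a minor point, the semisimplicity of $\mathfrak{L}$ that you establish is never used in your final argument, which rests only on simplicity of $\mathfrak{sp}(V,\psi)$; and the observation $\operatorname{span}(C)=V$, while correct, is strictly weaker than what is needed ($C=V$) and is where your approach stalls.
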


\begin{proof}
Assume that $V$ (and thus $\mathfrak{L}$) are nonzero. Let $W \subset V$ be the set of $\delta \in V$ such that $N(\delta): x \to \psi(x, \delta)\delta$ lies in $\mathfrak{L}$. Then we proceed as follows:

\textbf{(a)} $W$ is stable under homotheties (since $\mathfrak{L}$ is a vector subspace of $\mathfrak{gl}(V)$). 

\textbf{(b)} For $\delta \in W$, $\exp(\lambda N(\delta))$ (well-defined since $N(\delta)$ is nilpotent) is an automorphism of $(V, \psi, \mathfrak{L})$. $W$ is an invariant subspace for $\exp(\lambda N(\delta))$ and if $\delta', \ \delta'' \in W$, we have $$\exp(\lambda N(\delta'))\delta''=\delta''+\lambda \psi(\delta'', \delta')\delta' \in W.$$ It follows that if $\psi(\delta', \delta'') \neq 0$, then the vector subspace spanned by $\delta'$ and $\delta''$ lies in $W$. 

\textbf{(c)} From (b) we conclude that $W$ is the union of its maximal linear subspaces $W_{\alpha}$, pairwise orthogonal.  Each $W_{\alpha}$ is stable under the $N(\delta) \  (\delta \in W)$, so it is stable under $\mathfrak{L}$. Assumption (i) gives $W_{\alpha}=V$ and thus $\mathfrak{L}$ contains all the $N(\delta)$ for $\delta \in V$. 

But $\mathfrak{sp}(V, \psi)$ is generated by the $N(\delta) \ (\delta \in V)$ so our proof is complete.
\end{proof}

It is now easy to deduce:
\begin{thrm15}
Deligne's theorem is true for a hypersurface $X_0$ of odd dimension $n$ in $\mathbb{P}_{\mathbb{F}_q}^{n+1}$.
\end{thrm15}

\begin{proof}
We will sketch the proof. Let $X \simeq \mathbb{P}^n$ be the corresponding variety over $\mathbb{\bar F}_q$. It is enough to verify the claim of Deligne's theorem for $i=n$. This follows from Reduction 1 of Section 4 but in fact we do not need this here because one can show that $H^i( X, \mathbb{Q}_l)=0$ for odd $i \neq n$ and $H^i(X, \mathbb{Q}_l)=\mathbb{Q}_l(-\frac{i}{2})$ for even $i$ (see [2], 16.4), so we have $$Z(X_0, t)=\det(1-F^*t, H^n(X, \mathbb{Q}_l)/\prod_{i=0}^n(1-q^it)$$ (where $\det(1-F^*t, H^n(X, \mathbb{Q}_l))$ has rational coefficients) and it is enough to treat the case $i=n$. 

If we now vary $X_0$ within a Lefschetz pencil $f: \tilde X=\mathbb{P}^{n+1} \to D$ of hyperplane sections defined over $\mathbb{F}_q$ (it exists in a suitable projective embedding after taking a finite extension of $\mathbb{F}_q$ - see the proof in Section 8) one can verify that $E$ coincides with the whole $H^n(\tilde X, \mathbb{Q}_l)$ and applying the Main Lemma to the subsheaf $\mathcal{E}_0$ defining $E$ (see the next section) gives Deligne's theorem for all the hyperplanes of the pencil, in particular, for $X_0$. 
\end{proof}

\section{The rationality theorem}

Let $\mathbb{P}_0$ be a projective space of dimension $\geq 1$ over $\mathbb{F}_q$, $X_0 \subset \mathbb{P}_0$ a projective nonsingular variety, $A_0 \subset \mathbb{P}_0$ a linear subspace of codimension two, $D_0 \subset  \mathbb{\widecheck P}_0$ the dual projective line, $\mathbb{\bar F}_q$ the algebraic closure of $\mathbb{F}_q$ and $\mathbb{P}, X, A, D$ over $\mathbb{\bar F}_q$ obtained from $\mathbb{P}_0, X_0, A_0, D_0$ by extension of scalars. Diagram (8) of Subsection 6.2 comes from a similar diagram over $\mathbb{F}_q$:
$$\begin{tikzcd}
X_0 \arrow[r, leftarrow, "\pi_0"]
& \tilde X_0 \arrow[d, "f_0"]\\
& D_0
\end{tikzcd}$$

Assume that $X$ is connected of \textit{even} dimension $n+1=2m+2$ and that the pencil of hyperplane sections of $X$ defined by $D$ is a \textit{Lefschetz pencil}. The set $S$ of $t \in D$ such that $X_t$ is singular and defined over $\mathbb{F}_q$ comes from $S_0 \subset D_0$. We denote $U_0=D_0-S_0$ and $U=D-S$.

We will now define the sheaf $\mathcal{E}_0$ and prove that for $x \in |U_0|$ the action of $F_x$ on $\mathcal{E}_0/(\mathcal{E}_0 \cap \mathcal{E}_0^{\perp})$ is rational. Let $u \in U$. Since $E \subset H^n(X_u, \mathbb{Q}_l)$ is stable under $\pi_1(U, u)$, it is defined on $U$ by a local $\mathbb{Q}_l$-subsheaf $\mathcal{E}$ of $R^nf_*\mathbb{Q}_l$. Both sheaves are actually defined over $\mathbb{F}_q$. Indeed, $R^if_*\mathbb{Q}_l$ is an inverse image of the $\mathbb{Q}_l$-sheaf $R^if_{0*}\mathbb{Q}_l$ on $D_0$ and $\mathcal{E}$ is the inverse image of a local subsheaf $$\mathcal{E}_0 \subset R^nf_{0*}\mathbb{Q}_l$$ on $U_0$. The cup product defines a canonical skew-symmetric form $$\psi: R^nf_{0*}\mathbb{Q}_l \otimes R^nf_{0*}\mathbb{Q}_l \to \mathbb{Q}_l(-n).$$ Denote by $\mathcal{E}_0^{\perp}$ the ortogonal of $\mathcal{E}_0$ relative to $\psi$ on $R^nf_{0*}\mathbb{Q}_l | U_0$. We see that $\psi$ induces a nondegenerate (perfect) skew-symmetric pairing $$\psi: \mathcal{E}_0/(\mathcal{E}_0 \cap \mathcal{E}_0^{\perp}) \otimes \mathcal{E}_0/(\mathcal{E}_0 \cap \mathcal{E}_0^{\perp}) \to \mathbb{Q}_l(-n).$$ 

\begin{thrm16}
For all $x \in |U_0|$ the polynomial $\det (1-F_x^*t, \mathcal{E}_0/(\mathcal{E}_0 \cap \mathcal{E}_0^{\perp}))$ has rational coefficients. 
\end{thrm16}

\begin{cor}
Let $j_0$ be the inclusion of $U_0$ in $D_0$ and $j$ that of $U$ in $D$. The eigenvalues of $F^*$ acting on $H^1(D, j_*\mathcal{E}_0/(\mathcal{E}_0 \cap \mathcal{E}_0^{\perp}))$ are algebraic numbers all of which complex conjugates $\alpha$ satisfy $$q^{\frac{n+1}{2}-\frac{1}{2}} \leq |\alpha| \leq q^{\frac{n+1}{2}+\frac{1}{2}}.$$
\end{cor}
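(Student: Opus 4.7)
The plan is to deduce this as a direct application of Corollary 2 (from the Main Lemma section) to the locally constant $\mathbb{Q}_l$-sheaf $\mathcal{F}_0 := \mathcal{E}_0/(\mathcal{E}_0 \cap \mathcal{E}_0^{\perp})$ on the affine curve $U_0$, taking weight $\beta = n$. Since $D_0 = \mathbb{P}^1_{\mathbb{F}_q}$ and $S_0 \subset D_0$ is finite, $U_0$ is indeed affine, and $D = \mathbb{P}^1_{\bar{\mathbb{F}}_q}$ is the projective line required by Corollary 2. It therefore suffices to verify the three hypotheses (a), (b), (c) of the Main Lemma for $\mathcal{F}_0$ with $\beta = n$; the numerical bound then drops out by plugging $\beta = n$ into the conclusion of Corollary 2, whose exponents $\frac{\beta+1}{2} \pm \frac{1}{2}$ become exactly $\frac{n+1}{2} \pm \frac{1}{2}$.

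For hypothesis (a), the required skew-symmetric nondegenerate pairing $\psi : \mathcal{F}_0 \otimes \mathcal{F}_0 \to \mathbb{Q}_l(-n)$ is manufactured exactly as in the paragraph preceding the Rationality Theorem: the cup-product pairing on $R^n f_{0*}\mathbb{Q}_l$ takes values in $\mathbb{Q}_l(-n)$ by relative Poincaré duality on the smooth fibers, and passing to the quotient by the radical $\mathcal{E}_0 \cap \mathcal{E}_0^{\perp}$ precisely renders it nondegenerate. Skew-symmetry holds because $n$ is odd (recall $\dim X = n+1 = 2m+2$ is even by assumption). For hypothesis (b), the big geometric monodromy condition is the exact content of the Kajdan-Margulis theorem (Theorem 14), which asserts that the monodromy representation $\pi_1(U, u) \to Sp(E/(E \cap E^{\perp}), \psi_u)$ has open image. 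For hypothesis (c), this is precisely what the Rationality Theorem (Theorem 16) provides: $\det(1 - F_x^* t, \mathcal{F}_0) \in \mathbb{Q}[t]$ for all $x \in |U_0|$.

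With (a), (b), (c) in hand, the Main Lemma tells us $\mathcal{F}_0$ has weight $n$. Corollary 2 then produces the stated bounds on the eigenvalues of $F^*$ acting on $H^1(D, j_*\mathcal{F})$. Hence there is no real obstacle: the conceptual point is simply to recognize $\mathcal{E}_0/(\mathcal{E}_0 \cap \mathcal{E}_0^{\perp})$ as the natural object to which the Main Lemma machinery applies, since it is the largest quotient on which the cup-product pairing is symplectic. All the genuine difficulty has already been absorbed into Theorems 13, 14, and 16.
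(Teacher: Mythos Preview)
Your proof is correct and follows exactly the same approach as the paper: verify hypotheses (a), (b), (c) of the Main Lemma for $\mathcal{F}_0 = \mathcal{E}_0/(\mathcal{E}_0 \cap \mathcal{E}_0^{\perp})$ with $\beta = n$ via the cup-product pairing, the Kajdan--Margulis theorem, and the Rationality Theorem respectively, then invoke Corollary~2. The paper's own proof is a one-sentence version of the same argument; you have simply made each verification explicit.
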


\begin{proof}
By the theorem of Kajdan and Margulis and the Rationality Theorem, the assumptions of the Main Lemma are satisfied for $(U_0, \mathcal{E}_0/(\mathcal{E}_0 \cap \mathcal{E}_0^{\perp}), \psi)$ and $\beta=n$ so we can apply Corollary 2 of Section 5. 
\end{proof}

\begin{proof}[Proof of the rationality theorem] 
We start with the following observation:

\begin{lemma}
Let $\mathcal{G}_0$ be a locally constant $\mathbb{Q}_l$-sheaf on $U_0$ such that its inverse image $\mathcal{G}$ on $U$ is a constant sheaf. Then there exist units $\alpha_i$ in $\mathbb{\bar Q}_l$ such that for each $x \in |U_0|$ we have $$\det(1-F_x^*t, \mathcal{G}_0)=\prod_i (1-\alpha_i^{\deg(x)} t).$$
\end{lemma}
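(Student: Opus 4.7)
The plan is to translate everything into representations of the fundamental group via Proposition 1 and then exploit the hypothesis that the geometric monodromy is trivial. Under the equivalence of categories between locally constant $\mathbb{Q}_l$-sheaves and continuous $l$-adic representations of $\pi_1$, the sheaf $\mathcal{G}_0$ on $U_0$ corresponds to a continuous representation $\rho_0 \colon \pi_1(U_0, \bar u) \to GL(\mathcal{G}_{0,\bar u})$ at a chosen geometric point $\bar u$, and its inverse image $\mathcal{G}$ on $U$ corresponds to the restriction of $\rho_0$ to the geometric fundamental group $\pi_1(U, \bar u)$ (which sits inside $\pi_1(U_0, \bar u)$ via the homotopy exact sequence).

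The first substantive step is to invoke the short exact sequence
$$1 \to \pi_1(U, \bar u) \to \pi_1(U_0, \bar u) \to Gal(\mathbb{\bar F}_q / \mathbb{F}_q) \to 1$$
(it is harmless to assume $U_0$ is geometrically connected, which is the only setting in which the lemma is applied). By hypothesis $\mathcal{G}$ is the constant sheaf on $U$, so $\rho_0$ restricted to $\pi_1(U, \bar u)$ is trivial; consequently $\rho_0$ descends to a continuous representation $\bar\rho \colon Gal(\mathbb{\bar F}_q/\mathbb{F}_q) \to GL(\mathcal{G}_{0,\bar u})$. Since the Galois group is topologically generated by the geometric Frobenius $F$ (Remark 4), the entire representation $\bar\rho$ is encoded in the single invertible operator $\Phi := \bar\rho(F)$. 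I would then let $\alpha_1, \ldots, \alpha_r \in \mathbb{\bar Q}_l$ be the eigenvalues of $\Phi$; since $\Phi \in GL(\mathcal{G}_{0,\bar u})$, each $\alpha_i$ is nonzero, hence a unit in $\mathbb{\bar Q}_l$.

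Finally, for a closed point $x \in |U_0|$ with residue field $k(x) = \mathbb{F}_{q^{\deg(x)}}$, the local geometric Frobenius $F_x$ is by definition the geometric Frobenius of $k(x)$, which inside $Gal(\mathbb{\bar F}_q/\mathbb{F}_q)$ is exactly $F^{\deg(x)}$. Hence under the canonical identification of $\mathcal{G}_{0,\bar x}$ with $\mathcal{G}_{0,\bar u}$ (well defined up to the $\pi_1$-action, which is now controlled entirely by $\bar\rho$), $F_x^*$ acts as $\Phi^{\deg(x)}$, whose eigenvalues are $\alpha_1^{\deg(x)}, \ldots, \alpha_r^{\deg(x)}$. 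This yields
$$\det(1 - F_x^* t, \mathcal{G}_0) = \prod_i (1 - \alpha_i^{\deg(x)} t),$$
as claimed. There is no genuine obstacle here; the only thing requiring care is the bookkeeping of the geometric vs.\ arithmetic Frobenius conventions, but this only relabels the $\alpha_i$ and does not affect the final identity.
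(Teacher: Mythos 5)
Your proof is correct and follows essentially the same route as the paper: both use Proposition 1 to turn $\mathcal{G}_0$ into a representation of the arithmetic $\pi_1$, observe that constancy of $\mathcal{G}$ forces this representation to factor through $Gal(\mathbb{\bar F}_q/\mathbb{F}_q)$, and take the $\alpha_i$ to be the Frobenius eigenvalues of that Galois representation. The paper phrases the descent via the adjunction $\varepsilon^*\varepsilon_*\mathcal{G}_0 \to \mathcal{G}_0$ for $\varepsilon: U_0 \to Spec(\mathbb{F}_q)$ rather than via the homotopy exact sequence for $\pi_1$, but this is a cosmetic difference — the content (including the harmless arithmetic/geometric Frobenius sign ambiguity you note) is identical.
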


\begin{proof}
Let $\varepsilon: U_o \to Spec \mathbb{F}_q$ be the canonical morphism. The assumption of the lemma implies that $\varepsilon^*\varepsilon_* \mathcal{G}_0 \to \mathcal{G}_0$ becomes an isomorphism once we base change to $\mathbb{\bar F}_q$ so $\mathcal{G}_0$ is the pullback of the sheaf $\varepsilon_{*} \mathcal{G}_0$.

This sheaf identifies with an \textit{$l$-adic representation} $G_0$ of $Gal(\mathbb{\bar F}_q/\mathbb{F}_q)$ (by Proposition 1) and we can take $\alpha_i$ as in $$\det(1-Ft, G_0)=\prod_i (1-\alpha_i t)$$ (since $F^*_x$ acts on $\mathcal{G}_0$ as $F^{-\deg(x)}$). 
\end{proof}

This lemma can be applied to $R^if_{0*} \mathbb{Q}_l \ (i \neq n)$, to $R^n f_{0*} \mathbb{Q}_l/\mathcal{E}_0$ and to $\mathcal{E}_0 \cap \mathcal{E}_0^{\perp}$(to see that the inverse images of the latter two sheaves are constant, use the equivalence of categories of Proposition 1 and the fact that the monodromy acts by deforming the cohomology by the vanishing cycles). 

Let $x \in |U_0|$ and consider the fiber $X_x=f_0^{-1}(x)$ as a variety over the finite field $k(x)$. Let $\bar x$ be a point of $U$ that lies above $x$, then $X_{\bar x}$ is obtained from $X_x$ by an extension of scalars of $k(x)$ to the algebraic closure $k(\bar x)=\mathbb{\bar F}_q$ and $H^i(X_{\bar x}, \mathbb{Q}_l)$ is the stalk of $R^if_*\mathbb{Q}_l$ at $\bar x$ (Proposition 2 of Subsection 2.3). The cohomological interpretation of the zeta function for $X_x$ gives $$Z(X_x, t)=\prod_i \det(1-F_x^* t, R^i f_{0*} \mathbb{Q}_l)^{(-1)^{i+1}}$$ and we use the filtrations $$0 \to \mathcal{E}_0 \to R^n f_{0*} \mathbb{Q}_l \to R^n f_{0*} \mathbb{Q}_l/\mathcal{E}_0 \to 0$$ and $$0 \to \mathcal{E}_0 \cap \mathcal{E}_0^{\perp} \to \mathcal{E}_0 \to \mathcal{E}_0/ \mathcal{E}_0 \cap \mathcal{E}_0^{\perp} \to 0$$ to represent $Z(X_x, t)$ as the product $Z(X_x, t)=Z_s(t) \cdot Z_b(t)$, where $$Z_s(t)=\det(1-F_x^* t, R^n f_{0*} \mathbb{Q}_l/\mathcal{E}_0)\det(1-F_x^* t, \mathcal{E}_0 \cap \mathcal{E}_0^{\perp}) \prod_{i \neq n} \det(1-F_x^* t, R^i f_{0*} \mathbb{Q}_l)^{(-1)^{i+1}}$$ is the the part with "\textit{small monodromy}" and $$Z_b(t)=\det(1-F_x^* t, \mathcal{E}_0/(\mathcal{E}_0 \cap \mathcal{E}_0^{\perp})$$ is the part with "\textit{big monodromy}". 

We want to show that $Z_b(t) \in \mathbb{Q}(t)$. Let  $\mathcal{F}_0=\mathcal{E}_0/(\mathcal{E}_0 \cap \mathcal{E}_0^{\perp})$, $\mathcal{F}=\mathcal{E}/(\mathcal{E} \cap \mathcal{E}^{\perp})$ and apply Lemma 11 to the factors of $Z^f$. There exist $l$-adic units $\{ \alpha_i \}_{1 \leq i 
\leq N}$ and $\{ \beta_j \}_{1 \leq j \leq M}$ in $\mathbb{\bar Q}_l$ such that for all $x \in |U_0|$ $$Z(X_x, t)=\frac{\underset{i} \prod (1-\alpha_i^{\deg(x)}t)}{\underset{j} \prod (1-\beta_j^{\deg(x)}t)} Z_b(t) \ \ \textbf{(9)}.$$ By rationality of the zeta function the right side lies in $\mathbb{Q}(t)$. After cancellation, we may assume that $\alpha_i \neq \beta_j$ for all $i$ and $j$. 

\textbf{It suffices to prove that} the polynomials $\underset{i} \prod (1-\alpha_i t)$ and $\underset{j} \prod (1-\beta_j t)$ have rational coefficients, that is, \textbf{the family of $\alpha_i$ (resp. the family of $\beta_j$) is defined over $\mathbb{Q}$.} The idea is to characterize the polynomials $\underset{j} \prod (1-\beta_j^{\deg(x)}t)$ and $\underset{i} \prod (1-\alpha_i^{\deg(x)}t)$ as the denominator and the numerator of (9) respectively. The problem here is that the factors coming from $Z_b(t)$ might cancel some of the $(1-\alpha_i^{\deg(x)}t)$. However, it can only happen to a very limited extent as we vary (9) over $x \in |U_0|$ because "the monodromy of $Z_b(t)$ is big" by the theorem of Kajdan and Margulis. We make this intuition precise in the following technical proposition.

\begin{fact}
Let $\{ \gamma_i \}_{1 \leq i \leq P}$ and $\{ \delta_j \}_{1 \leq j \leq Q}$ be two families of $l$-adic units of $\mathbb{\bar Q}_l$. Assume that $\gamma_i \neq \delta_j$ for all $i$ and $j$. If $K$ is a large enough set of integers $\neq 1$, and $L$ is a large enough nowhere dense subset of $|U_0|$, then, if $x \in |U_0|$ satisfies $k \nmid \deg(x)$ (for all $k \in K$) and $x \notin L$, the denominator of $$\det(1-F_x^*t, \mathcal{F}_0) \underset{i} \prod (1-\gamma_i^{\deg(x)}t)/ \underset{j} \prod (1-\delta_j^{\deg(x)}t) \ \ \textbf{(10)},$$ written in irreducible form, is $\underset{j} \prod (1-\delta_j^{\deg(x)}t)$. 
\end{fact}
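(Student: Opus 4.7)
The irreducible form of (10) is obtained from the explicit denominator $\prod_j(1-\delta_j^{\deg(x)}t)$ and the numerator $\det(1-F_x^*t,\mathcal{F}_0)\prod_i(1-\gamma_i^{\deg(x)}t)$ by canceling their common factors in $\mathbb{\bar Q}_l[t]$. My plan is to choose $K$ and $L$ so that no such cancellation survives; a common factor can arise in only two ways: (I) $\gamma_i^{\deg(x)}=\delta_j^{\deg(x)}$ for some $(i,j)$, equivalently $\gamma_i/\delta_j$ is a $\deg(x)$-th root of unity; or (II) some eigenvalue $\alpha$ of $F_x^*$ on $\mathcal{F}_{0,x}$ coincides with some $\delta_j^{\deg(x)}$.

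To eliminate (I), I take $K$ to consist of the multiplicative orders $k_{ij}>1$ of those ratios $\gamma_i/\delta_j$ that happen to be roots of unity. Since $\gamma_i\neq\delta_j$ by hypothesis, each such order is at least $2$, and there are only finitely many pairs $(i,j)$, so $K$ is finite. If $\gamma_i/\delta_j$ is not a root of unity, no exponent $\deg(x)$ causes $(\gamma_i/\delta_j)^{\deg(x)}=1$; if it is, the condition $k_{ij}\nmid\deg(x)$ (implied by $k\nmid\deg(x)$ for all $k\in K$) excludes the equality. Thus (I) is ruled out uniformly.

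To eliminate (II), I take $L:=\bigcup_{j=1}^{Q}L_j$, where $L_j:=\{x\in|U_0|:\delta_j^{\deg(x)}\text{ is an eigenvalue of }F_x^*\text{ on }\mathcal{F}_{0,x}\}$, and show that each $L_j$ is nowhere dense in $|U_0|$. Using Lemma 11 in reverse (via the equivalence of Proposition 1) I pick a rank-one locally constant $\mathbb{Q}_l$-sheaf $\mathcal{G}_{0,j}$ on $U_0$ whose pullback to $U$ is constant and on which $F_x^*$ acts by $\delta_j^{-\deg(x)}$, namely the one attached to the $1$-dimensional $Gal(\mathbb{\bar F}_q/\mathbb{F}_q)$-representation sending arithmetic Frobenius to $\delta_j^{-1}$. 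Then $x\in L_j$ exactly when $1$ is an eigenvalue of $F_x^*$ acting on $(\mathcal{F}_0\otimes\mathcal{G}_{0,j})_x$. Since twisting by $\mathcal{G}_{0,j}$ does not change the geometric monodromy representation, the theorem of Kajdan--Margulis still forces the image of $\pi_1(U,u)$ in $GL(\mathcal{F}_u)$ to be open in $Sp(\mathcal{F}_u,\psi_u)$. The subset of $Sp(\mathcal{F}_u,\psi_u)$ on which $1$ appears as an eigenvalue is a proper Zariski-closed subvariety, and a Chebotarev-type equidistribution of Frobenius conjugacy classes in the monodromy image then confines $L_j$ to a nowhere dense subset of $|U_0|$.

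The main obstacle is precisely this last step: converting openness of the monodromy image together with codimension-one-ness of the ``has $1$ as eigenvalue'' locus into genuine nowhere-denseness of $L_j$ inside $|U_0|$. The subtlety is that, although after twisting the bad subvariety of $Sp$ is fixed, the Frobenius conjugacy classes attached to $x\in|U_0|$ range over all degrees $\deg(x)$, so one must invoke an equidistribution statement robust across this whole family rather than a pointwise codimension bound. In Deligne's exposition the required control is packaged through the cohomology and rationality of the $L$-functions of the twisted sheaves $\mathcal{F}_0\otimes\mathcal{G}_{0,j}^{\pm 1}$, and this is where the technical heart of Proposition 10 sits.
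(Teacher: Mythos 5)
Your handling of the set $K$ agrees with the paper's: both of you use that for each pair $(i,j)$ the integers $n$ with $\gamma_i^n=\delta_j^n$ form an ideal $(n_{ij})$ with $n_{ij}\neq 1$, and you both take $K$ to be the collection of the nonzero $n_{ij}$. That half is fine.

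For $L$, however, there is a real gap, and you have in fact located it yourself. Your plan is to twist by a geometrically constant rank-one sheaf $\mathcal{G}_{0,j}$ so that ``$\delta_j^{\deg(x)}$ is an eigenvalue of $F_x^*$ on $\mathcal{F}_0$'' becomes ``$1$ is an eigenvalue of $F_x^*$ on $\mathcal{F}_0\otimes\mathcal{G}_{0,j}$'', and then to argue that the locus ``$1$ is an eigenvalue'' is a proper Zariski-closed subvariety of $Sp(\mathcal{F}_u,\psi_u)$. But even after twisting, the arithmetic Frobenius at $x$ does not land in $Sp$: its similitude multiplier is $q^{n\deg(x)}\delta_j^{-2\deg(x)}$, which still varies with $\deg(x)$, so the Frobenius conjugacy classes sweep out a whole family of cosets of $Sp$ inside $GSp$. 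You acknowledge this (``the Frobenius conjugacy classes range over all degrees''), but the passage from ``codimension-one in each fiber'' to ``nowhere dense in $|U_0|$'' is exactly the nontrivial content of the proposition and you leave it unproved. Your guess at how the paper closes the gap (through cohomology and rationality of $L$-functions of the twisted sheaves) is also not what happens.

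The paper's actual mechanism, which you should supply, does not twist at all. It introduces the arithmetic representation $\rho_1:\pi_1(U_0,u)\to H$, where $H\subset \hat{\mathbb{Z}}\times GSp(\mathcal{F}_u,\psi)$ is the subgroup cut out by $\mu(g)=q^{-n}$, lets $H_1$ be its image, and proves two lemmas: (i) $H_1$ is open in $H$ (Kajdan--Margulis plus surjectivity onto $\hat{\mathbb{Z}}$), and (ii) the set $Z\subset H_1$ of pairs $(n,g)$ with $\delta^n$ an eigenvalue of $g$ is closed of Haar measure zero. The proof of (ii) is exactly the ingredient you are missing: one fixes $n\in\hat{\mathbb{Z}}$, observes that $Z\cap GSp_n$ is a proper closed algebraic subvariety of the fiber of $H_1$ over $n$, hence has measure zero in that fiber, and then applies Fubini for the projection $H_1\to\hat{\mathbb{Z}}$ to conclude $Z$ has measure zero in $H_1$. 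Chebotarev's density theorem, applied to the open image $H_1$, then converts ``measure zero'' into ``the set of $x\in|U_0|$ whose Frobenius lands in $Z$ is nowhere dense.'' This fiberwise-over-$\hat{\mathbb{Z}}$ plus Fubini argument is the robust equidistribution statement you were asking for; without it the proof does not close.
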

Let me complete the proof of the theorem modulo Proposition 10 and then come back and prove Proposition 10. 
According to the following lemma, Proposition 10 provides an \textit{intrinsic description of the family of $\delta_j$} in terms of the family of rational fractions (10) for $x \in |U_0|$. 

\begin{lemma}
Let $K$ be a finite set of integers $\neq 1$ and $\{\delta_j \}_{1 \leq j \leq Q}$ and $\{ \varepsilon_j \}_{1 \leq j \leq Q}$ be two families of elements of a field. If, for all $n$ large enough and not divisible by any of the $k \in K$, the family of $\delta_j^n$ coincides with that of $\varepsilon_j^n$ (up to order), then the family of $\delta_j$ coincides with that of $\varepsilon_j$ (up to order). 
\end{lemma}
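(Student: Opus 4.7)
The plan is to split the problem into a ``torsion-free'' part and a ``torsion'' part by passing to the multiplicative group generated by all the $\delta_j,\varepsilon_j$, and then to exploit the freedom to choose $n \in S$ carefully in the torsion quotient.

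We may assume all $\delta_j,\varepsilon_j \in k^{\times}$, since zeros on either side must appear with matching multiplicity (apply the hypothesis at any single $n$). Let $G \subset k^{\times}$ be the finitely generated abelian subgroup they generate, so $G \cong \mathbb{Z}^r \oplus \mu_M$ where the torsion subgroup $\mu_M$ is a finite cyclic group of some order $M$ (a group of roots of unity of $k$). Let $\pi\colon G \twoheadrightarrow G/\mathrm{tors}(G) \cong \mathbb{Z}^r$ be the projection, and set $S = \{n \geq n_0 : k \nmid n \text{ for all } k \in K\}$. For each $n \in S$, the hypothesis $\{\delta_j^n\} = \{\varepsilon_j^n\}$ gives a permutation $\tau_n$ with $\delta_j^n = \varepsilon_{\tau_n(j)}^n$; applying $\pi$ yields $n\pi(\delta_j) = n\pi(\varepsilon_{\tau_n(j)})$ in $\mathbb{Z}^r$, and torsion-freeness forces $\pi(\delta_j) = \pi(\varepsilon_{\tau_n(j)})$. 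Hence $\{\pi(\delta_j)\} = \{\pi(\varepsilon_j)\}$ as multisets in $\mathbb{Z}^r$, and after reordering the $\varepsilon_j$ once and for all, we may assume $\pi(\delta_j) = \pi(\varepsilon_j)$ for every $j$.

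Partition the indices into fibers $I_v = \{j : \pi(\delta_j) = v\}$. Choose any $\alpha_v \in \pi^{-1}(v)$ and write $\delta_j = \alpha_v \mu_j$, $\varepsilon_j = \alpha_v \nu_j$ with $\mu_j,\nu_j \in \mu_M$ for $j \in I_v$. For every $n \in S$ the permutation $\tau_n$ preserves each fiber (by applying $\pi$ again), and the equation $\delta_j^n = \varepsilon_{\tau_n(j)}^n$ reduces on $I_v$ to $\mu_j^n = \nu_{\tau_n(j)}^n$, giving $\{\mu_j^n\}_{j \in I_v} = \{\nu_j^n\}_{j \in I_v}$ in $\mu_M$ for every fiber and every $n \in S$. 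This is a version of the original problem now restricted to multisets of roots of unity in $\mu_M$. To finish it suffices to find one $n \in S$ with $\gcd(n,M) = 1$: for such $n$ the map $\omega \mapsto \omega^n$ is a bijection of $\mu_M$, so the equality of $n$-th power multisets in each fiber upgrades to $\{\mu_j\}_{I_v} = \{\nu_j\}_{I_v}$, and combining fibers gives $\{\delta_j\} = \{\varepsilon_j\}$. Setting $L = \mathrm{lcm}(K \cup \{M\})$, any $n \geq n_0$ in the arithmetic progression $1 + L\mathbb{Z}$ satisfies $n \equiv 1 \pmod k$ for every $k \in K$ (so $k \nmid n$ and $n \in S$) together with $\gcd(n,M) = 1$; such $n$ manifestly exist.

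The main subtlety is the coupling between the permutation $\tau_n$, which varies with $n$, and the search for a good $n \in S$. A direct pigeonhole on the $\tau_n$ that fixes a single permutation $\sigma$ for infinitely many $n \in S$ only yields that the ratios $\delta_j/\varepsilon_{\sigma(j)}$ are roots of unity of some joint order $M$ with the resulting infinite subset contained in $M\mathbb{Z}$, and it is not then transparent that $S$ contains an $n$ actually coprime to $M$ (rather than merely one not divisible by $M$). Passing through $G$ and its torsion-free quotient first straightens out the non-torsion part globally and reduces the remainder to a clean CRT argument producing the required $n$.
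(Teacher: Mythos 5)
Your proof is correct, but it takes a genuinely different route from the paper's. The paper argues by induction on $Q$: it focuses on $\delta_Q$, observes that for each $j$ the set of integers $n$ with $\delta_Q^n=\varepsilon_j^n$ is an ideal $(n_j)$ of $\mathbb{Z}$ (after the same implicit reduction to nonzero elements you make explicitly), notes that if $\delta_Q$ matched no $\varepsilon_j$ then every $n_j \neq 1$, and then picks an arbitrarily large $n$ avoiding divisibility by all the $n_j$ and all the $k\in K$ to get a contradiction; it then strikes the matched pair and inducts. Your argument is a one-shot structural decomposition: embed everything in the finitely generated group $G\subset k^\times$, split $G\cong\mathbb{Z}^r\oplus\mu_M$, match the torsion-free parts from a single $n\in S$, and then handle the torsion part on each fiber by exhibiting an $n\in S$ coprime to $M$ via the arithmetic progression $1+L\mathbb{Z}$.

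What each approach buys: the paper's induction is shorter and entirely elementary, requiring no structure theory, but it handles one coordinate at a time and leaves somewhat implicit why the obstruction must be a root-of-unity phenomenon. Your argument is longer to set up but makes the obstruction completely transparent (it is exactly the torsion of $G$), and the final CRT step makes precise the observation you highlight at the end, namely that one needs an $n\in S$ actually coprime to $M$ rather than merely not divisible by the order of any single ratio. One small point worth tidying: as the paper itself allows some $n_{ij}=0$ when it invokes this lemma, your $L$ should be taken as $\mathrm{lcm}\bigl((K\setminus\{0\})\cup\{M\}\bigr)$ (and one should read $k\nmid n$ with $k=0$ as the vacuous condition $n\neq 0$); the paper's own proof silently needs the analogous care when some $n_j=0$.
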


\begin{proof}
We will prove this by induction on $Q$ (with $Q$=1 the trivial case). The set of integers $n$ such that $\delta_Q^n=\varepsilon_j^n$ is an ideal $(n_j)$. Suppose that there exists no $j_0$ such that $\delta_Q=\varepsilon_{j_0}$. Then all the $n_j$ are distinct from $1$ and there exist arbitrarily large integers $n$, not divisible by any of the $n_j$ and any of the $k \in K$. For such an $n$ we have $\delta_Q^n \neq \varepsilon_j^n$, that is a contradiction. So there exists a $j_0$ such that $\delta_Q=\varepsilon_{j_0}$ and we conclude by applying the induction hypothesis to the families $\{ \delta_j \}_{j \neq Q}$ and $\{ \varepsilon_j \}_{j \neq j_0}$.
\end{proof}

By putting $\{ \gamma_i \}= \{ \alpha_i \}$ and $\{ \delta_i \}= \{ \beta_i \}$ in Proposition 10 we get \textit{an intrinsic characterization of the family of $\beta_j$} in terms of the family of rational functions $Z(X_x, t) \ (x \in |U_0|)$. Since $Z(X_x, t) \in \mathbb{Q}(t)$, \textbf{the family of $\beta_j$ is defined over $\mathbb{Q}$.} 

We will now give \textit{an intrinsic characterization of the family of $\alpha_i$}.
\begin{fact}
Let $\{ \gamma_i \}_{1 \leq i \leq P}$ and $\{ \delta_j \}_{1 \leq j \leq Q}$ be two families of $p$-adic units of $\mathbb{\bar Q}_l$ and let $R(t)=\underset{i} \prod (1-\gamma_i t)$, $S(t)=\underset{j} \prod (1-\delta_j t)$. Assume that for all $x \in |U_0|$ the polynomial $\underset{j} \prod (1-\delta_j^{\deg(x)} t)$ divides $$\underset{i} \prod (1-\gamma_i^{\deg(x)}t) \det(1-F_x^*t, \mathcal{F}_0).$$ Then $S(t)$ divides $R(t)$. 
\end{fact}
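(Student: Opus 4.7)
The plan is to deduce Proposition 11 from Proposition 10 by first reducing to the case of two families with no common element. Match up common members as multisets: write $\{\gamma_i\}=\{\gamma'_i\}\sqcup\{\rho_k\}$ and $\{\delta_j\}=\{\delta'_j\}\sqcup\{\rho_k\}$, where no $\gamma'_i$ equals any $\delta'_j$. The desired conclusion $S(t)\mid R(t)$ is then equivalent to the assertion that the residual family $\{\delta'_j\}$ is empty. Correspondingly, $R(t)=\prod_i(1-\gamma'_i t)\cdot\prod_k(1-\rho_k t)$ and similarly for $S(t)$.

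Next I would observe that the divisibility hypothesis descends to the reduced families. For every $x\in|U_0|$ the factors $\prod_k(1-\rho_k^{\deg(x)}t)$ appear identically in $\prod_i(1-\gamma_i^{\deg(x)}t)$ and $\prod_j(1-\delta_j^{\deg(x)}t)$, so they cancel from the hypothesised divisibility, yielding
\[
\prod_j(1-\delta'^{\,\deg(x)}_j t)\ \Big|\ \prod_i(1-\gamma'^{\,\deg(x)}_i t)\,\det(1-F_x^*t,\mathcal{F}_0).
\]
Equivalently, the rational function
\[
\Phi_x(t)\;:=\;\det(1-F_x^*t,\mathcal{F}_0)\,\prod_i(1-\gamma'^{\,\deg(x)}_i t)\Big/\prod_j(1-\delta'^{\,\deg(x)}_j t)
\]
is a polynomial in $t$, so its irreducible-form denominator is $1$.

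Now I would invoke Proposition 10 applied to the reduced pair $\{\gamma'_i\},\{\delta'_j\}$: its hypothesis $\gamma'_i\neq\delta'_j$ holds by construction. Proposition 10 produces a finite set $K$ of integers $\neq 1$ and a nowhere-dense subset $L\subset|U_0|$ such that for every $x\notin L$ with $\deg(x)$ not divisible by any $k\in K$, the irreducible-form denominator of $\Phi_x(t)$ equals $\prod_j(1-\delta'^{\,\deg(x)}_j t)$. Combined with the conclusion of the previous paragraph, this forces $\prod_j(1-\delta'^{\,\deg(x)}_j t)=1$ for all such $x$. Since $|U_0|$ contains closed points satisfying the constraints of $K$ and $L$ simultaneously (in fact infinitely many), and since each $\delta'_j$ is a unit, hence nonzero, this is possible only if $\{\delta'_j\}=\varnothing$, which is exactly $S(t)\mid R(t)$.

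The proof is thus essentially a bookkeeping reduction: the substantive analytic content lives inside Proposition 10, while Proposition 11 only repackages it as a divisibility of polynomials. The main point requiring care is the cancellation step, where one must verify that matching up common elements of the two families preserves the divisibility in precisely the literal form required by Proposition 10, so that the latter's explicit description of the irreducible-form denominator can be set against the polynomiality forced by the hypothesis.
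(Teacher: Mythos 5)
Your proposal is correct and takes essentially the same approach as the paper: remove common elements to satisfy the hypothesis of Proposition 10, observe that the hypothesis of Proposition 11 forces the rational fractions (10) for the reduced families to be polynomials, and then conclude from Proposition 10's description of the irreducible-form denominator that no $\delta'_j$ can survive. The paper's proof is compressed to a single sentence; you have merely made explicit the routine but necessary verifications — that the divisibility hypothesis descends to the reduced families after cancelling the common factor $\prod_k(1-\rho_k^{\deg(x)}t)$ (valid since one works in an integral domain), that $S(t)\mid R(t)$ is equivalent to $\{\delta'_j\}=\varnothing$ once common elements are matched off, and that closed points $x\in|U_0|$ avoiding both $K$ and $L$ do exist.
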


\begin{proof}
We remove from the families $\{ \gamma_i \}$ and $\{ \delta_j \}$ the pairs of common elements until they satisfy the assumption of Proposition 10 and apply Proposition 10. By assumption, the rational fractions (10) are polynomials. Therefore, no $\delta$ survives, which precisely means that $S(t)$ divides $R(t)$. 
\end{proof}

We have already established the rationality of $\underset{j} \prod (1-\beta_j^{\deg(x)}t)$, so $$\underset{i} \prod (1-\alpha_j^{\deg(x)}t) \det (1-F_x^* t, \mathcal{F}_0) \in \mathbb{Q}[t].$$ Proposition 11 provides an \textit{intrinsic description of the family of $\alpha_i$} in terms of this family of polynomials so \textbf{the family of $\alpha_i$ is also defined over $\mathbb{Q}$.}
\end{proof}

Our proof will be complete once we justify Proposition 10.
\begin{proof}[Proof of Proposition 10]
Let $u \in U$ and $\mathcal{F}_u$ the stalk of $\mathcal{F}$ at $u$. The \textit{arithmetic fundamental group} $\pi_1(U_0, u)$ is the extension of $\mathbb{\hat Z}=Gal( \mathbb{\bar F}/\mathbb{F})$ by the \textit{geometric fundamental group}: $$0 \to \pi_1(U, u) \to \pi_1(U_0, u) \to Gal(\mathbb{\bar F}_q/\mathbb{F}_q) \to 0.$$ The group $\pi_1(U_0, u)$ acts on $\mathcal{F}_u$ by symplectic similitudes $$\rho_0: \pi_1(U_0, u) \to GSp(\mathcal{F}_u, \psi).$$ 
This means that the action of $\pi_1(U_0, u)$ satisfies the property  $$\psi_u(gv, gw)=\mu(g)\psi(v, w), \ g \in \pi_1(U_0, u)$$ for a multiplicative character $\mu:Gl(\mathcal{F}_u) \to k^{\times}$ called the similitude multiplier.
This action restricts to the previously considered (in Subsection 6.3) monodromy representation of the geometric fundamental group: $$\rho: \pi_1(U, u) \to Sp(\mathcal{F}_u, \psi).$$
Since $\psi$ has values in $\mathbb{Q}_l(-n)$, it follows that the product of the canonical projection to $\mathbb{\hat Z}$ and $\rho$ takes $\pi_1(U_0, u)$ to the subgroup $$H \subset \mathbb{\hat Z} \times GSp(\mathcal{F}_u, \psi)$$ defined by the equation\footnote{Since $q$ is an $l$-adic unit, $q^n \in \mathbb{Q}_l^*$ is defined for all $n \in \mathbb{\hat Z}$.} $$q^{-n}=\mu(g), n \in \mathbb{\hat Z}.$$  
Let $$\rho_1: \pi_1(U_0, u) \to H$$ be this map and denote by $H_1$ the image of $\rho_1$. We need the following lemmas.
\begin{lemma}
$H_1$ is open in $H$. 
\end{lemma}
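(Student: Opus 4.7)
The plan is to deduce openness of $H_1$ in $H$ from the Kajdan-Margulis theorem (Theorem 14 of Subsection 6.3) together with the basic exact sequence
$$0 \to \pi_1(U, u) \to \pi_1(U_0, u) \to Gal(\mathbb{\bar F}_q/\mathbb{F}_q) = \mathbb{\hat Z} \to 0.$$
Let $p: H \to \mathbb{\hat Z}$ denote the projection onto the first factor, so that $\ker p = Sp(\mathcal{F}_u, \psi)$.

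The first step is to identify $H_1 \cap Sp$ and $p(H_1)$. By construction, $p \circ \rho_1: \pi_1(U_0, u) \to \mathbb{\hat Z}$ is precisely the canonical projection in the exact sequence above; hence $\rho_1^{-1}(Sp) = \ker(p \circ \rho_1) = \pi_1(U, u)$, which gives
$$H_1 \cap Sp = \rho_1(\pi_1(U, u)) = \rho(\pi_1(U, u)).$$
By Kajdan-Margulis, this subgroup is open in $Sp$. The same factorization together with surjectivity of the canonical projection shows $p(H_1) = \mathbb{\hat Z}$.

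It remains to prove the purely topological statement: any subgroup $K \subset H$ with $K \cap Sp$ open in $Sp$ and $p(K) = \mathbb{\hat Z}$ is open in $H$. I would choose an open neighborhood $W \subset GSp$ of the identity with $W \cap Sp \subset K \cap Sp$, and for a sufficiently small open subgroup $N\mathbb{\hat Z} \subset \mathbb{\hat Z}$ set $\Omega = (N\mathbb{\hat Z} \times W) \cap H$, an open neighborhood of the identity in $H$. Using continuity of $n \mapsto q^{-n}$ at $0$ (which guarantees the fiber $p^{-1}(n)$ meets $W$ for $n$ in a small enough $N\mathbb{\hat Z}$) together with a local continuous section of $p|K$ near $0 \in \mathbb{\hat Z}$ (produced from compactness of $H_1$ and surjectivity of $p|H_1$), one arranges that every element of $\Omega$ decomposes as a product of an element of $K$ projecting to the correct $n$ times an element of $W \cap Sp \subset K$, so $\Omega \subset K$.

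The main obstacle is precisely this final topological step. Since $\mathbb{\hat Z}$ is profinite rather than discrete, $Sp$ is not open in $H$, so openness of $K \cap Sp$ in $Sp$ does not formally imply openness of $K$ in $H$. Surjectivity of $p|K$ onto $\mathbb{\hat Z}$ must be used essentially, to extend the neighborhood of the identity in $Sp$ across nearby fibers of $p$, via the local product structure of the extension $1 \to Sp \to H \to \mathbb{\hat Z} \to 1$.
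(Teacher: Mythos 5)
Your argument uses the same two ingredients the paper records for this lemma: surjectivity of $p|_{H_1}$ onto $\mathbb{\hat Z}$, coming from the homotopy exact sequence for $U_0 \to \mathrm{Spec}\,\mathbb{F}_q$, and openness of $H_1 \cap Sp(\mathcal{F}_u, \psi)$ in $Sp(\mathcal{F}_u, \psi)$ via the theorem of Kajdan and Margulis. Where you go further than the paper is in noticing, correctly, that these two facts do not \emph{formally} imply openness of $H_1$ in $H$, because $Sp$ is not open in $H$; that topological step is genuinely needed and the paper leaves it implicit.

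Your route through a local continuous section of $p|_{H_1}$ near $0$ is sound in outline, but the parenthetical justification (``produced from compactness of $H_1$ and surjectivity of $p|H_1$'') is not quite enough: compactness together with surjectivity does not, in general, give a continuous section. What actually produces one here is that $H_1$, being a compact subgroup of the totally disconnected locally compact group $\mathbb{\hat Z} \times GSp(\mathcal{F}_u, \psi)$, is profinite, and a continuous surjective homomorphism of profinite groups always admits a continuous set-theoretic section. Granting $\sigma(n) = (n, \tau(n))$ such a section with $\sigma(0) = e$, every $(n, g) \in (N\mathbb{\hat Z} \times W) \cap H$ factors as $\sigma(n) \cdot (0, \tau(n)^{-1} g)$ with the second factor in $Sp$, and by continuity of $\tau$ at $0$ one may shrink $N$ and $W$ until $\tau(n)^{-1} g$ lands in $H_1 \cap Sp$; so the neighborhood lies in $H_1$. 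You might also note that one can avoid sections entirely: surjectivity of $p|_{H_1}$ gives $H = H_1 \cdot Sp$, so $H$ is a union of closed right cosets $H_1 s$ over representatives of $Sp / (H_1 \cap Sp)$, a countable set since $Sp$ is $\sigma$-compact and $H_1 \cap Sp$ is open; by Baire category in the locally compact group $H$, some coset and hence $H_1$ itself has nonempty interior, so $H_1$ is open.
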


\begin{proof}
Indeed $\mathbb{\pi}_1(U_0, u)$ projects onto $\mathbb{\hat Z}$ and the image of $\pi_1(U, u)=Ker(\pi_1(U_0, u) \to \mathbb{\hat Z})$ in \\ $Sp(\mathcal{F}_u, \psi)=Ker(H \to \mathbb{\hat Z})$ is open by the theorem of Kajdan and Margulis.   
\end{proof}

\begin{lemma}
For $\delta \in \mathbb{\bar Q}_l$ an $l$-adic unit, the set $Z$ of $(n, g) \in H_1$ such that $\delta^n$ is an eigenvalue of $g$ is closed of measure $0$ (with respect to the Haar measure on $\mathbb{\hat Z} \times GSp(\mathcal{F}_u, \psi)$). 
\end{lemma}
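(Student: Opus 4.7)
The plan is to separate closedness from the measure-zero statement. Closedness is formal continuity. For measure zero I will use the short exact sequence $0 \to Sp(\mathcal{F}_u, \psi) \to H \to \widehat{\mathbb{Z}} \to 0$ coming from the first projection and run a Fubini argument along this fibration.

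For closedness, $Z$ is the zero-set of the function $\phi(n,g) := \det(g - \delta^n I)$ on $H_1$. The only thing to check is the continuity of $n \mapsto \delta^n$ on $\widehat{\mathbb{Z}}$. Writing the $l$-adic unit $\delta$ as $\delta = \zeta\cdot u$ in some finite extension of $\mathbb{Q}_l$, with $\zeta$ a root of unity of order $m$ coprime to $l$ and $u$ a principal unit, the map $n \mapsto \delta^n$ factors through the continuous quotient $\widehat{\mathbb{Z}} \twoheadrightarrow \mathbb{Z}/m\mathbb{Z} \times \mathbb{Z}_l$, $n \mapsto (\bar n, n_l)$, as $(\bar n, n_l) \mapsto \zeta^{\bar n} u^{n_l}$. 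The determinant is then continuous, so $Z$ is closed.

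For the measure statement, the first projection $\pi\colon H \to \widehat{\mathbb{Z}}$ is a surjective continuous group homomorphism with kernel $\{0\} \times Sp(\mathcal{F}_u, \psi)$. All three groups are unimodular, so Weil's quotient-integral formula decomposes the Haar measure on $H$ (and hence on the open subgroup $H_1$) as a product of Haar measure on $\widehat{\mathbb{Z}}$ and Haar measure on the fibres $\pi^{-1}(n) = \mu^{-1}(q^{-n})$, each of which is an $Sp$-torsor. For each $n \in \widehat{\mathbb{Z}}$ the slice $Z_n := Z \cap \pi^{-1}(n)$ is the analytic subset of $\mu^{-1}(q^{-n})$ cut out by the single equation $\det(g - \delta^n I) = 0$. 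To check that $Z_n$ is a \emph{proper} subset I construct an explicit witness: choose a symplectic basis $(e_1,f_1,\ldots,e_k,f_k)$ of $\mathcal{F}_u$ and let $g$ act diagonally by $g(e_i) = \lambda_i e_i$, $g(f_i) = (q^{-n}/\lambda_i) f_i$. Then $\mu(g) = q^{-n}$ and the eigenvalues of $g$ are $\{\lambda_i, q^{-n}/\lambda_i\}_{i=1}^{k}$; choosing each $\lambda_i$ in the base field outside the finite forbidden set $\{\delta^n, q^{-n}/\delta^n\}$ produces $g \in \mu^{-1}(q^{-n}) \setminus Z_n$. Since the vanishing locus of a nontrivial analytic function on a connected smooth $l$-adic manifold has Haar measure zero, $Z_n$ has measure zero in $\mu^{-1}(q^{-n})$ for every $n$; Fubini then gives $\mathrm{meas}_H(Z) = 0$, and restricting to $H_1$ proves the lemma.

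The only non-routine step is the construction of the witness $g$, which is immediate in symplectic coordinates. Everything else is standard: the measure-theoretic decomposition via Weil's formula relies only on the unimodularity of $\widehat{\mathbb{Z}}$ and $Sp$, and the continuity of $\delta^{(\cdot)}$ is a structural observation about the topological group $\overline{\mathbb{Z}}_l^{\times}$. I expect no substantive obstacle.
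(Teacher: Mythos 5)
Your proof is correct and takes the same route as the paper: slice over the fibers of $\pi\colon H_1 \to \widehat{\mathbb{Z}}$, show each slice $Z_n$ is a proper closed subvariety of the corresponding $Sp$-coset (hence of Haar measure zero), and conclude by Fubini. You supply two details the paper leaves tacit --- the continuity of $n \mapsto \delta^n$ on $\widehat{\mathbb{Z}}$ needed for closedness, and the explicit diagonal witness showing $\det(g - \delta^n I)$ is not identically zero on $\mu^{-1}(q^{-n})$.
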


\begin{proof}
It is clear that $Z$ is closed. Fix an $n \in \mathbb{\hat Z}$ and let $GSp_n$ be the set of $g \in GSp(\mathcal{F}_u, \psi)$ such that $\mu(g)=q^{-n}$, $Z_n=Z \cap GSp_n$. Then $Z_n$ is a closed proper algebraic subvariety of the inverse image in $H_1$ of $n$ so it has measure $0$. Therefore, we have shown that "fiberwise over $\mathbb{\hat Z}$" the subset $Z$ has measure $0$ and the claim follows by applying Fubini to the projection $H_1 \to \mathbb{\hat Z}$. 
\end{proof}

We can now complete the proof of Proposition 10. For each $i$ and $j$, the set of integers $n$ such that $\gamma_i^n=\delta_j^n$ is the set of multiples of a fixed integer $n_{ij}$ (we do not exclude $n_{ij}=0$). By assumption, $n_{ij} \neq 1$. 

According to Lemma 14 and the Chebotarev's density theorem, the set of $x \in |U_0|$ such that $\beta_j^{\deg(x)}$ is an eigenvalue of $F_x^*$ acting on $\mathcal{F}_0$ is nowhere dense. But then we can take for $K$ the set of all the $n_{ij}$ and for $L$ the set of $x$ as above so the proposition is proved. 
\end{proof}

\section{Completion of the proof of the Weil conjectures}
We are now ready to complete the proof of Deligne's theorem (and the Weil conjectures). After the reductions of Section 4 it remains to prove the following theorem. 

\begin{dt}
Let $X_0$ be a nonsingular absolutely irreducible projective variety of even dimension $d$ over $\mathbb{F}_q$, $X$ the corresponding variety over $\mathbb{\bar F}_q$ and $\alpha$ an eigenvalue of $F^*$ acting on $H^d(X, \mathbb{Q}_l)$. Then $\alpha$ is an algebraic number all of which complex conjugates, still denoted $\alpha$, satisfy $$q^{\frac{d}{2}-\frac{1}{2}} \leq |\alpha| \leq q^{\frac{d}{2}+\frac{1}{2}} \ \ \textbf{(11)}.$$ 
\end{dt}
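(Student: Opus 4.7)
Combining the re-embedding $\iota_r$ of Section 6.2 with the finite-extension reduction of Section 4, we may assume, after possibly replacing $\mathbb{F}_q$ by a finite extension, that $X_0$ admits a Lefschetz pencil $\{H_t\}_{t \in D_0}$ of hyperplane sections defined over $\mathbb{F}_q$, with axis $A_0$. Let $\tilde X_0$ be the blow-up of $X_0$ along $A_0 \cap X_0$ and let $f_0 : \tilde X_0 \to D_0 \simeq \mathbb{P}^1_{\mathbb{F}_q}$, $\pi_0 : \tilde X_0 \to X_0$ be the associated maps. The pull-back $\pi^* : H^d(X, \mathbb{Q}_l) \to H^d(\tilde X, \mathbb{Q}_l)$ is Frobenius-equivariant and injective (the Gysin map $\pi_*$ provides a retraction since $\pi$ has generic degree one), so $H^d(X, \mathbb{Q}_l)$ sits inside $H^d(\tilde X, \mathbb{Q}_l)$ as a Frobenius-stable direct summand. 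It therefore suffices to prove the bound $(11)$ for the eigenvalues of $F^*$ on $H^d(\tilde X, \mathbb{Q}_l)$.

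\textbf{Step 2 (Leray filtration; the outer pieces).} The Leray spectral sequence for $f : \tilde X \to D$ equips $H^d(\tilde X, \mathbb{Q}_l)$ with a Frobenius-stable three-step filtration whose graded pieces are
$$H^2(D, R^{d-2} f_* \mathbb{Q}_l), \qquad H^1(D, R^{d-1} f_* \mathbb{Q}_l), \qquad H^0(D, R^d f_* \mathbb{Q}_l).$$
Set $n := d-1$, which is odd because $d$ is even. By Proposition 6 of Section 6.1, $R^i f_* \mathbb{Q}_l$ is constant on $D$ for $i \neq n$ (with the exception $i = n+1$ arising only in the degenerate case of all vanishing cycles being zero; there the middle piece vanishes and the argument is immediate). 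Cohomology of a constant sheaf on $\mathbb{P}^1$ in degrees $0$ and $2$ just recovers the stalk and its Tate twist by $(-1)$ respectively, so the two outer graded pieces equal $H^{d-2}(X_u, \mathbb{Q}_l)(-1)$ and $H^d(X_u, \mathbb{Q}_l)$ for any smooth fiber $X_u$. Both contribute Frobenius eigenvalues of absolute value exactly $q^{d/2}$ by the inductive hypothesis, namely Deligne's theorem $DT$ in dimension $d-1 < d$ applied to the hyperplane section $X_u$.

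\textbf{Step 3 (The middle piece via $\mathcal{E}_0$ and the Main Lemma).} The heart of the matter is the middle graded piece $H^1(D, R^n f_* \mathbb{Q}_l)$. Proposition 6(a) gives $R^n f_* \mathbb{Q}_l = j_* j^* R^n f_* \mathbb{Q}_l$, so we must bound the Frobenius eigenvalues on $H^1(D, j_*(R^n f_{0*}\mathbb{Q}_l | U_0))$. Filter the restriction to $U_0$ by
$$0 \subset \mathcal{E}_0 \cap \mathcal{E}_0^\perp \subset \mathcal{E}_0 \subset R^n f_{0*} \mathbb{Q}_l |U_0,$$
with $\mathcal{E}_0$ the subsheaf of vanishing cycles introduced in Section 7. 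By the Picard--Lefschetz formula the monodromy acts trivially on both $R^n f_* \mathbb{Q}_l/\mathcal{E}_0$ and on $\mathcal{E}_0 \cap \mathcal{E}_0^\perp$, so their pullbacks to $U$ are constant, and the resulting contributions to $H^1(D, j_* \cdot)$ are handled by the same inductive $DT$ that dispatched the outer pieces. The surviving quotient $\mathcal{F}_0 := \mathcal{E}_0 / (\mathcal{E}_0 \cap \mathcal{E}_0^\perp)$ satisfies all three hypotheses of the Main Lemma of Section 5 on the curve $U_0$: (a) the skew-symmetric non-degenerate pairing $\psi : \mathcal{F}_0 \otimes \mathcal{F}_0 \to \mathbb{Q}_l(-n)$ comes from cup product, (b) big geometric monodromy is the Kajdan--Margulis theorem of Section 6.3, and (c) rationality of the local characteristic polynomials is the Rationality Theorem of Section 7. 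The corollary of the Rationality Theorem (applied with $\beta = n$) then yields
$$q^{\frac{n+1}{2} - \frac{1}{2}} \;\leq\; |\alpha| \;\leq\; q^{\frac{n+1}{2} + \frac{1}{2}}$$
for every Frobenius eigenvalue $\alpha$ on $H^1(D, j_* \mathcal{F}_0)$. Since $\tfrac{n+1}{2} = \tfrac{d}{2}$, this is exactly $(11)$, and assembling the three graded pieces finishes the argument.

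\textbf{Main obstacle.} The decisive analytic input is packaged in the Main Lemma, so once that machinery is available the remaining difficulty is careful bookkeeping: one must verify that the ``small monodromy'' pieces (the filtration quotients $R^n f_* \mathbb{Q}_l / \mathcal{E}_0$ and $\mathcal{E}_0 \cap \mathcal{E}_0^\perp$, together with the constant $R^i f_* \mathbb{Q}_l$ for $i \neq n$) really do become constant after pullback to $U$, and that Proposition 6 really does force the vanishing of the corresponding unwanted $H^1$-contributions via $H^1(\mathbb{P}^1, \mathbb{Q}_l) = 0$, so that all of them reduce to inductive invocations of $DT$ on the $(d-1)$-dimensional fiber $X_u$. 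To avoid circularity the whole argument must be organised as an induction on $\dim X$: $DR$ is proved in even dimension $d$ assuming $DT$ (equivalently, $DR$ in strictly smaller even dimensions via Reduction 2) has already been established in every dimension $< d$.
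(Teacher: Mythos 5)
Your overall architecture --- reduction to $\tilde X_0$ via a Lefschetz pencil, splitting via the Leray spectral sequence for $f : \tilde X \to D$, and feeding the middle Leray piece into the Main Lemma through $\mathcal{E}_0/(\mathcal{E}_0 \cap \mathcal{E}_0^\perp)$, the Kajdan--Margulis theorem and the Rationality Theorem --- matches the paper's proof in Section 8. But there is a genuine gap in your treatment of the two outer Leray pieces, and it is precisely the point where the paper has to do extra work.

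You dispose of $H^2(D, R^{d-2}f_*\mathbb{Q}_l) \simeq H^{d-2}(X_u, \mathbb{Q}_l)(-1)$ and $H^0(D, R^d f_*\mathbb{Q}_l) \simeq H^d(X_u, \mathbb{Q}_l)$ by invoking ``Deligne's theorem $DT$ in dimension $d-1$ applied to $X_u$.'' But $DT$ for $X_u$ is not part of the inductive hypothesis. The induction is on $DR$ over even dimensions; and the derivation $DR \Rightarrow DT$ (Reduction 2) for a variety of dimension $e$ requires $DR$ on $X^k$ of dimension $ke$ for arbitrarily large even $k$ --- dimensions far larger than $d$. So ``$DT$ for $X_u$, $\dim X_u = d-1$'' is simply not available while $DR(d)$ is still being proved, and asserting the eigenvalues have absolute value \emph{exactly} $q^{d/2}$ compounds the circularity (even $DR(d-1)$ would not make sense, since $d-1$ is odd).

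The paper avoids this by the extra reduction you omitted: one assumes (after a finite extension) that $X_{u_0}$ admits a smooth hyperplane section $Y_0$ defined over $\mathbb{F}_q$, of dimension $d-2$, which is even. For the $E_2^{2,d-2}$ piece one shows $H^{d-2}(X_u,\mathbb{Q}_l) \hookrightarrow H^{d-2}(Y,\mathbb{Q}_l)$ (from the cohomology sequence of $Y \hookrightarrow X_u$ together with Poincar\'e duality and the cohomological-dimension bound for the affine $X_u \setminus Y$). For the $E_2^{0,d}$ piece one uses the weak Lefschetz surjection $H^{d-2}(Y,\mathbb{Q}_l)(-1) \twoheadrightarrow H^d(X_u,\mathbb{Q}_l)$. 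In both cases the target cohomology sits in the \emph{middle} degree $d-2$ of the even-dimensional variety $Y_0$, so $DR(d-2)$ --- which \emph{is} the inductive hypothesis --- applies directly and, after the Tate twist by $(-1)$, yields $q^{d/2-1/2} \leq |\alpha| \leq q^{d/2+1/2}$. You need to insert the hyperplane section $Y_0$ and these two Lefschetz-type maps before the induction closes.

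A smaller point: in Step 3 you say the constant filtration quotients ``are handled by the same inductive $DT$ that dispatched the outer pieces.'' In the paper they are handled differently (and more cheaply): $H^1(\mathbb{P}^1,\text{constant sheaf}) = 0$, so those pieces simply disappear from $H^1(D,-)$; and in the case where $\delta \in \mathcal{E}\cap\mathcal{E}^\perp$ the residual contribution $\bigoplus_{s}\mathbb{Q}_l(m-n)_s$ is bounded directly since $F^*$ acts there by $q^{d/2}$. You allude to the $H^1 = 0$ fact in the closing paragraph, but the two-case split (whether or not $\delta$ lies in $\mathcal{E} \cap \mathcal{E}^\perp$) and the different filtrations it requires should appear explicitly in the argument rather than in a disclaimer.
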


\begin{proof}
We will prove this by induction on $d$ (always assumed even). The case $d=0$ is trivial so we assume from now on $d \geq 2$ and let $d=n+1=2m+2$.

In a suitable projective embedding $i: X \to \mathbb{P}$, $X$ admits a Lefschetz pencil of hyperplane sections (see Subsection 6.2). By Proposition 5 of Section 4 we may replace $\mathbb{F}_q$ by a finite extension so we may assume that the pencil is defined over $\mathbb{F}_q$. 

Therefore, we have a projective embedding $X_0 \to \mathbb{P}_0$ and a subspace $A_0 \subset \mathbb{P}_0$ of codimension two that defines the Lefschetz pencil. By taking a new extension of scalars we can assume that (in the notation of Sections 6 and 7): 

\textbf{(a)} The points of $S$ are defined over $\mathbb{F}_q$. 

\textbf{(b)} The vanishing cycles for $x_s \ (s \in S)$ are defined over $\mathbb{F}_q$ (since only $\pm \delta$ is intrinsic, we can only define them in quadratic extensions).

\textbf{(c)} There exists a rational point $u_0 \in U_0$. We choose a point $u$ over $u_0$ as the base point of $U$. 

\textbf{(d)} $X_{u_0}=f_0^{-1}(u_0)$ admits a smooth hyperplane section $Y_0$ defined over $\mathbb{F_q}$. We let $Y=Y_0 \otimes_{\mathbb{F}_q} \mathbb{\bar F}_q$.

Since $\tilde X$ is obtained from $X$ by blowing up along a smooth of dimension two subvariety $A \cap X$, we have an embedding $$H^i(X, \mathbb{Q}_l) \xhookrightarrow{} H^i(\tilde X, \mathbb{Q}_l)$$ (in fact, $H^i(\tilde X, \mathbb{Q}_l)= H^i(X, \mathbb{Q}_l) \oplus H^{i-2} (A \cap X, \mathbb{Q}_l)(-1))$by the Thom isomorphism theorem. See a, rather technical, proof in [2], 33.2). \textbf{So it suffices to prove (11) for the eigenvalues $\alpha$ of $F^*$ acting on $H^d(\tilde X, \mathbb{Q}_l)$}.

From the Leray spectral sequence for $f$ $$E_2^{pq}=H^p(D, R^q f_*\mathbb{Q}_l) \Rightarrow H^{p+q}(\tilde X, \mathbb{Q}_l)$$ we see that \textbf{it suffices to prove (11) for the eigenvalues of $F^*$ acting on $E_2^{pq}$ for $p+q=d=n+1$:} $$\textbf{(A)} \  E_2^{2, n-1}=H^2(D, R^{n-1} f_*\mathbb{Q}_l), \ \textbf{(B)} \ E_2^{0, n+1}=H^0(D, R^{n+1} f_*\mathbb{Q}_l) \ and \ \textbf{(C)} \ E_2^{1, n}=H^1(D, R^{n} f_*\mathbb{Q}_l).$$ One can suspect that case (C) is the troublesome one since the first cohomology group of curves is harder to deal with in general. We will therefore first treat (A) and (B). 

$\textbf{(A)} \  E_2^{2, n-1}=H^2(D, R^{n-1} f_*\mathbb{Q}_l).$ By Proposition 8, $R^{n-1}f_*\mathbb{Q}_l$ is a constant sheaf on $D$. By the properties of higher direct images (Proposition 2 of Subsection 2.3) and the cohomology groups of $D \simeq \mathbb{P}^1$ (Proposition 3 of Subsection 2.6) we see that $$H^2(D, R^{n-1} f_*\mathbb{Q}_l) \simeq (R^{n-1}f_* \mathbb{Q}_l)_u(-1)=H^{n-1}(X_u, \mathbb{Q}_l)(-1).$$ Consider the long exact sequence in cohomology $$\cdots \to H_c^{n-1} (X_u \setminus Y, \mathbb{Q}_l) \to H^{n-1}(X_u, \mathbb{Q}_l) \to H(Y, \mathbb{Q}_l) \to \cdots$$ induced by the the short exact sequence\footnote{The notations here are the same as in Corollary 4 of Section 7 and footnote 16 and this sequence is precisely the one that appears in Milne 8.15 (the derivation is easy and goes by considering the induced sequence of stalks).} $$0 \to j_! j^* \mathbb{Q}_l \to \mathbb{Q}_l \to i_* i^* \mathbb{Q}_l \to 0.$$

By Poincare duality (second form) and the Cohomological dimension property (b) of $l$-adic cohomology (Subsection 2.4) we have $$H_c^{n-1} (X_u \setminus Y, \mathbb{Q}_l) \simeq H^{n+1} (X_u \setminus Y, \mathbb{Q}_l\widecheck )=0$$ (since $X_u \setminus Y$ is affine of dimension $n$), so the map $H^{n-1}(X_u, \mathbb{Q}_l) \to H^{n-1}(Y, \mathbb{Q}_l)$ is injective and we can apply the induction hypothesis to $Y_0$.

$\textbf{(B)} \ E_2^{0, n+1}=H^0(D, R^{n+1} f_*\mathbb{Q}_l).$ \textbf{If the vanishing cycles are nonzero}, $R^{n+1}f_* \mathbb{Q}_l$ is constant (see Proposition 8) and $$H^0(D, R^{n+1} f_*\mathbb{Q}_l) \simeq (R^{n+1} f_*\mathbb{Q}_l)_u=H^{n+1}(X_u, \mathbb{Q}_l).$$ Lefschetz theorem (see Subsection 2.4) shows that the Gysin map $$H^{n-1}(Y, \mathbb{Q}_l)(-1) \to H^{n+1}(X_u, \mathbb{Q}_l)$$ is surjective and we apply the induction hypothesis to $Y_0$ again. 

\begin{remark}
By expanding the proof of the weak Lefschetz theorem one can see that in fact the argument here is dual to that of (A).
\end{remark}

\textbf{If the vanishing cycles are zero}, the exact sequence of Proposition 8 (b) gives the following exact sequence $$\underset{s \in S} \oplus \mathbb{Q}_l (m-n)_s \to E_2^{0, n+1} \to H^{n+1}(X_u, \mathbb{Q}_l).$$ The eigenvalues of $F$ acting on $\mathbb{Q}_l (m-n)_s$ are $q^{n-m}=q^{d/2}$ and for $H^{n+1}(X_u, \mathbb{Q}_l)$ everything is as above. 

It remains to handle the case of $F^*$ acting on $E_2^{1, n}=H^1(D, R^{n} f_*\mathbb{Q}_l)$.

$\textbf{(C)} \ E_2^{1, n}=H^1(D, R^{n} f_*\mathbb{Q}_l).$ 
If the vanishing cycles are zero, $R^nf_* \mathbb{Q}_l$ is constant (Proposition 8 (b)) and $E_2^{1, n}=0$ so \textbf{we may assume that the vanishing cycles are nonzero}. 

We can filter $R^nf_* \mathbb{Q}_l=j_*j^* R^n f_* \mathbb{Q}_l$ (Proposition 8) by the subsheafs $j_*\mathcal{E}$ and $j_*(\mathcal{E} \cap \mathcal{E}^{\perp})$. \textbf{If the vanishing cycles $\delta$ are not in $\mathcal{E} \cap \mathcal{E}^{\perp}$}(or, rather,  in $E \cap E^{\perp}$) we have exact sequences\footnote{To see that the following sheaves are constant, use the equivalence of categories stated in Proposition 1, the facts that the monodromy acts by deforming the cohomology by the vanishing cycles and that the inverse/direct images of constant sheaves via $j$ are constant (the last claim is true here because $R^nf_* \mathbb{Q}_l=j_*j^* R^n f_* \mathbb{Q}_l$ but false in general).}: $$0 \to j_*\mathcal{E} \to R^n f_* \mathbb{Q}_l \to constant \ sheaf \ j_*(R^n f_* \mathbb{Q}_l/ \mathcal{E}) \to 0$$ $$0 \to \ constant \ sheaf \ j_*(\mathcal{E} \cap \mathcal{E}^{\perp})) \to j_*\mathcal{E} \to j_*(\mathcal{E}/(\mathcal{E} \cap \mathcal{E}^{\perp})) \to 0$$
\textbf{If the vanishing cycles $\delta$ are in $\mathcal{E} \cap \mathcal{E}^{\perp}$} (in fact, one can later show that this does not happen - see remark after the proof), we have $\mathcal{E} \subset \mathcal{E}^{\perp}$. Let $\mathcal{F}=R^nf_*\mathbb{Q}_l/j_* \mathcal{E}^{\perp}$. Then we have exact sequences: $$ 0 \to \  constant \ sheaf \ j_*\mathcal{E}^{\perp} \to R^n f_* \mathbb{Q}_l \to \mathcal{F} \to 0$$ $$ 0 \to \mathcal{F} \to \ constant \ sheaf \ j_*j^* \mathcal{F} \to \underset{s \in S} \oplus \mathbb{Q}_l(n-m)_s \to 0$$ 

\textbf{In the first case} the long exact sequences in cohomology give $$H^1(D, j_* \mathcal{E}) \to H^1(D, R^n f_* \mathbb{Q}_l) \to 0$$ $$0 \to H^1(D, j_*\mathcal{E}) \to H^1(D, j_*(\mathcal{E}/(\mathcal{E} \cap \mathcal{E}^{\perp})))$$ and we apply Corollary 4 to $H^1(D, j_* \mathcal{E})$.

\textbf{In the second case}, they give $$0 \to H^1(D, R^n f_*\mathbb{Q}_l) \to H^1(D, \mathcal{F})$$ $$ \underset{s \in S} \oplus \mathbb{Q}_l(n-m) \to H^1(D, \mathcal{F}) \to 0$$ and we remark that $F^*$ acts on $\mathbb{Q}_l(n-m)$ by multiplication by $q^{n-m}=q^{d/2}$.
\end{proof}
Therefore, we have finally proved Deligne's theorem and the Weil conjectures are fully justified.

\begin{remark}
If we had the hard Lefschetz theorem (most people believed it to be necessary for the proof but Deligne managed to derive at as a consequence that we will state in the next section) the proof for (C) would be much shorter.  Indeed, it implies that $\mathcal{E} \cap \mathcal{E}^{\perp}$ is zero and that $R^n f_* \mathbb{Q}_l$ is the direct sum of $j_*\mathcal{E}$ and a constant sheaf. The first cohomology group of a constant sheaf on $\mathbb{P}^1$ is zero and it would suffice to apply Corollary 4.
\end{remark}

\section{Consequences}

In this section we state some of the bright applications of Deligne's theorem, prove a generalization of the Hasse-Weil bound to nonsingular complete intersections and briefly discuss and provide references for other results (also see [7], par.8 and an overview of N.Katz [25]). 

\subsection{Consequences in Number theory}

The fact that there are many spectacular applications of Deligne's theorem in Number theory comes as no surprize. Let me introduce a few:

\textbf{(A)} One can estimate $\# Y_0 (\mathbb{F}_q)$ for a variety $Y_0$ over $\mathbb{F}_q$ provided that the corresponding variety $Y$ over $\mathbb{\bar F}_q$ has a simple enough cohomological structure. For example, for $Y_0$ a nonsingular $n$-dimensional hypersurface of degree $d$ we get $$|\# Y_0(\mathbb{F}_q)- \sum_{i=0}^n q^i| \leq \left( \frac{(d-1)^{n+2}+(-1)^{n+2}(d-1)}{d} \right)q^{\frac{n}{2}}.$$ More generally, one can prove the following.

\begin{thrm17}
Let $X_0 \subset \mathbb{P}_0^{n+r}$ be a nonsingular complete intersection over $\mathbb{F}_q$ of dimension $n$ and of multidegree $(d_1, \cdots d_r)$. Let $b'$ be the $n$-th Betti number of the complex nonsingular complete intersection with the same dimension and multidegree. Put $b=b'$ for $n$ odd and $b=b'-1$ for $n$ even. Then $$|\# X_0(\mathbb{F}_q)-\# \mathbb{P}^n(\mathbb{F}_q)| \leq b q^{n/2}.$$
\end{thrm17}

\begin{proof}
Let $X$ over $\mathbb{\bar F}_q$ be the variety obtained from $X_0$ by extension of scalars and for each $i$ let $\mathbb{Q}_l \eta^i$ be the line in $H^{2i}(X, \mathbb{Q}_l)$ generated by the $i$-th cup power of the cohomology class of a hyperplane section. $F^*$ acts on $\mathbb{Q}_l \eta^i$ by multiplication by $q^i$. 

The cohomology of $X$ is the direct sum of the $\{ \mathbb{Q}_l \eta^i \}_{0 \leq i \leq n}$ and the \textit{primitive part} of $H^n (X, \mathbb{Q}_l)$ of dimension $b$. Lemma 1 of Subsection 2.5 implies that there exist $b$ algebraic numbers $\alpha_j$, the eigenvalues of $F^*$ acting on this primitive part of the cohomology, such that 

$$\# X_0(\mathbb{F}_q)=\sum_{i=0}^n q^i+(-1)^n \sum_j \alpha_j.$$ According to Deligne's theorem, $|\alpha_j|=q^{n/2}$ and $$|\# X_0(\mathbb{F}_q)-\# \mathbb{P}^n(\mathbb{F}_q)|=|\# X_0(\mathbb{F}_q)-\sum_{i=0}^n q^i|= |\sum_j \alpha_j| \leq \sum_j |\alpha_j|=b q^{n/2}.$$
\end{proof}
This result was recently generalized to singular complete intersections (see [26]).

\textbf{(B)} Deligne's theorem also provides consequences for \textit{the theory of modular forms}. In particular, it implies the following.

\begin{thrm18}
The Ramunajan's $\tau$-function given by the Fourier coefficients $\tau(n)$ of the cusp form $\Delta(z)$ of weight 12: $$\Delta(z)=\sum_{n \geq 1} \tau(n) q^n =q \prod_{n \geq 1} (1-q^n)^{24}$$ with $q=e^{2 \pi i z}$ for any prime $p$ satisfies $$|\tau(p)| \leq 2p^{\frac{11}{2}}.$$
\end{thrm18}

One can also prove the following generalization to other modular forms.

\begin{thrm19}
Let $N$ be an integer $\geq 1$, $\varepsilon: (\mathbb{Z}/N)^* \to \mathbb{C}^*$ a character, $k$ an integer $\geq 2$ and $f$ a holomorphic modular form on $\Gamma_0(N)$ of weight $k$ and with character $\varepsilon$ : $f$ is  a holomorphic function on the the Poincare half-plane $X$ such that for $\left(\begin{array}{cc} a & b\\ c & d \end{array}\right) \in SL(2, \mathbb{Z}), $ with $c \equiv 0 \ (N)$ we have $$f \left(\frac{az+b}{cz+d} \right)=\varepsilon(a)^{-1}(cz+d)^kf(z).$$ We assume that $f$ is cuspidal and primitive, in particular $f$ is an eigenvector of the Hecke operators $T_p \ (p \nmid N)$. Let $f=\sum_{n=1}^{\infty} a_nq^n$ with $q=e^{2\pi i z}$ (and $a_1=1$). Then for $p$ prime not dividing $N$ $$|a_p| \leq 2p^{\frac{k-1}{2}}.$$ In other words, the roots of the equation $$T^2-a_p T+\varepsilon(p)p^{k-1}$$ are of absolute value $p^{\frac{k-1}{2}}$.
\end{thrm19}

Indeed, the roots turn out to be the eigenvalues of the Frobenius endomorphism acting on $H^{k-1}(X, \mathbb{Q}_l)$ of a nonsingular projective variety $X$ of dimension $k-1$ defined over $\mathbb{F}_p$. That was originally proved (under restrictive assumptions) by Deligne in [27]. 

I highly recommend reading Deligne's appendix to [28] for proofs of both of the results. Let me also note that Deligne and Serre managed to prove the second conjecture for $k=1$ as well but the proof is quite different. 

\textbf{(C)} Deligne's theorem can also be used to estimate \textit{exponential sums in several variables}. In particular:

\textbf{(a)} Let $f$ be a polynomial of $n$ variables over $\mathbb{F}_q$ of degree $d$ prime to $p$ such that its homogenious part $f_d$ defines a nonsingular projective hypersurface. Then $$|\sum_{x_i \in \mathbb{F}_q} \exp \left( \frac{2 \pi i}{p} f(x_1, \cdots, x_n) \right)| \leq (d-1)^n q^{n/2}.$$

\textbf{(b)} We have the following bound for \textit{multiple Kloosterman sums}: $$|\sum_{x_i \in \mathbb{F}_q^{\times}} \exp \left( \frac{2 \pi i}{p} \left(x_1+ \cdots +x_n +\frac{1}{x_1 \cdots x_n}\right) \right)| \leq (n+1) q^{n/2}.$$

Both (a) and (b) come from the following fact (originally suggested by Bombiery).

\begin{thrm20}
Let $Q$ be a polynomial in $n$ variables of degree $d$ over $\mathbb{F}_q$, $Q_d$ its homogeneous part of degree $d$ of $Q$ and $\psi: \mathbb{F}_q \to \mathbb{C}^*$ an additive nontrivial character on $\mathbb{F}_q$. Assume that: 

(i) $d$ is coprime to the characteristic of $\mathbb{F}_q$.

(ii) The hypersurface $H_0$ in $\mathbb{P}_{\mathbb{F}_q}^{n-1}$ defined by $Q_d$ is nonsingular. 

Then $$|\sum_{x_i \in \mathbb{F}_q} \psi(Q(x_1, \cdots, x_n))| \leq (d-1)^n q^{n/2}.$$
\end{thrm20}
As might be expected, the main idea is to represent the sum $$\sum_{x_i \in \mathbb{F}_q} \psi(Q(x_1, \cdots, x_n))$$ as the left hand side of formula (7) of Subsection 2.5 for a certain sheaf $\mathcal{F}$.
The proof of this theorem is given in [7], 8.4-8.13.

\textbf{(D)} Consider the following setup. Let $X$ be a smooth projective surface defined over $\mathbb{F}_q$ and let $\pi: X \to \mathbb{P}^1$ be a regular map which fibers $X_t$ are elliptic curves except for a finite number of $t \in \mathbb{\bar F}_q$. For $t \in \mathbb{P}^1(\mathbb{F}_q)$ let $$\# X_t(\mathbb{F}_q)=q-e(t)+1.$$ If we choose a nonsingular fiber $X_t$ then Deligne's theorem implies that $$|2e(t)| < 2 \sqrt{q}.$$ One may wonder if there are similar estimates for $\underset{t} \sum e(t)$, $\underset{t} \sum e(t)^2$ or other sums of this sort. An approach already discussed above, namely, reinterpreting the sum as the trace of the Frobenius endomorphism on a curve, then studying the monodromy action and applying Deligne's theorem, yields the following results:

\begin{fact}
Let $e(t)$ be defined as above. Then:

(a)If the $j$ invariant of the family is not constant then $$|\sum_t e(t)| \leq (\beta_2(X)-2)q,$$ where $\beta_2(X)$ is the second Betti number of $X$.

(b)If If the $j$ invariant of the family is not constant then $$\sum_t e(t)^2=q^2+O(q^{\frac{3}{2}}).$$

(c) Let $S$ be the set of $t \in \mathbb{\bar F}_q$ such that $X_t$ is singular. If the sets $S$ and $\{s-u, \ s \in S \}$ are disjoint, then $$\sum_t e(t)e(t+u)=O(q^{\frac{3}{2}}).$$
\end{fact}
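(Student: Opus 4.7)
The unifying principle is that $e(t) = \mathrm{Tr}(F_t^*, \mathcal{F}_t)$ for the rank-$2$ locally constant $\mathbb{Q}_l$-sheaf $\mathcal{F} = (R^1\pi_*\mathbb{Q}_l)|_U$ on $U = \mathbb{P}^1 \setminus S$. This sheaf is pure of weight $1$, and since the $j$-invariant is non-constant the geometric monodromy of $\mathcal{F}$ is Zariski-dense in $SL_2$: one has $\det\mathcal{F} = \mathbb{Q}_l(-1)$ by the Weil pairing, and the monodromy is generated by nontrivial Picard-Lefschetz transvections around the singular fibers, which forces the image to be an infinite closed subgroup of $SL_2$ containing unipotents, hence Zariski-dense. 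All three statements then fall out of the cohomological interpretation of $L$-functions (Subsection 2.5) combined with Deligne's theorem, applied to $\mathcal{F}$ or to tensor constructions thereof.

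For (a), the plan is to use the identity $\sum_{t \in \mathbb{P}^1(\mathbb{F}_q)} e(t) = (q+1)^2 - \#X(\mathbb{F}_q)$ (obtained from $\sum_t \#X_t(\mathbb{F}_q) = \#X(\mathbb{F}_q)$) and compute $\#X(\mathbb{F}_q)$ via the Lefschetz trace formula on $X$. The Leray spectral sequence $E_2^{p,q} = H^p(\mathbb{P}^1, R^q\pi_*\mathbb{Q}_l) \Rightarrow H^{p+q}(X, \mathbb{Q}_l)$ together with big monodromy gives $H^0(\mathbb{P}^1, R^1\pi_*\mathbb{Q}_l) = \mathcal{F}_u^{\pi_1(U,u)} = 0$, so $H^1(X, \mathbb{Q}_l) = 0$ and by Poincar\'e duality $H^3(X, \mathbb{Q}_l) = 0$; meanwhile $H^2(X, \mathbb{Q}_l)$ contains two canonical lines $E_\infty^{2,0} \cong \mathbb{Q}_l(-1) \cong E_\infty^{0,2}$ that contribute $2q$ and cancel against the $2q$ in $(q+1)^2$. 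The remaining subquotient $E_\infty^{1,1}$ has dimension $\beta_2(X) - 2$ and, by Deligne's purity theorem applied to $H^2(X, \mathbb{Q}_l)$, its eigenvalues have absolute value $q$, yielding the stated bound.

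For (b), I would write $e(t)^2 = \mathrm{Tr}(F_t^*, \mathcal{F}_t \otimes \mathcal{F}_t)$ and use the canonical decomposition $\mathcal{F} \otimes \mathcal{F} \cong \mathrm{Sym}^2\mathcal{F} \oplus \det\mathcal{F} = \mathrm{Sym}^2\mathcal{F} \oplus \mathbb{Q}_l(-1)$. The $\mathbb{Q}_l(-1)$-summand sums to $q \cdot \#U(\mathbb{F}_q) = q^2 + O(q)$, which is the main term. Since $\mathrm{Sym}^2$ of the standard representation of $SL_2$ is nontrivial and irreducible, big monodromy implies that both the invariants and coinvariants of $\mathrm{Sym}^2\mathcal{F}_u$ under $\pi_1(U,u)$ vanish, so by Proposition 3 of Subsection 2.6 one has $H^0_c(U, \mathrm{Sym}^2\mathcal{F}) = H^2_c(U, \mathrm{Sym}^2\mathcal{F}) = 0$; only $H^1_c$ remains, and since $\mathrm{Sym}^2\mathcal{F}$ is pure of weight $2$, Deligne's theorem forces its eigenvalues to have absolute value at most $q^{3/2}$, giving the $O(q^{3/2})$ error (the singular fibers contribute only $O(1)$ to the overall sum). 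For (c), form the sheaf $\mathcal{G} = \mathcal{F} \otimes [+u]^*\mathcal{F}$ on the open $V = U \cap (U - u)$, nonempty by the disjointness hypothesis; then $\mathrm{Tr}(F_t^*, \mathcal{G}_t) = e(t)e(t+u)$, and the same program runs provided the geometric monodromy of $\mathcal{G}$ is the full $SL_2 \times SL_2$. Under this condition, the tensor representation of $SL_2 \times SL_2$ on $V \otimes V$ is irreducible, coinvariants vanish, $\mathcal{G}$ is pure of weight $2$, and one concludes exactly as in (b).

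The main obstacle is the monodromy computation in (c). By Goursat's lemma, any closed subgroup of $SL_2 \times SL_2$ with surjective projections to each factor is either the full product or the graph of an automorphism of $SL_2$; the disjointness hypothesis $S \cap (S - u) = \emptyset$ is precisely what rules out the second possibility, because any such identification would have to be compatible with both systems of local Picard-Lefschetz transvections, which are supported at disjoint subsets of $\mathbb{P}^1$ so that a transvection of one factor acts trivially on the other. Making this Kolchin-Ribet-style argument fully rigorous, and ensuring that Deligne's weight bounds (beyond the symplectic self-dual framework of Section 5, on which the Main Lemma was phrased) are available for the auxiliary sheaves $\mathrm{Sym}^2\mathcal{F}$ and $\mathcal{G}$, are the points where the most care is required.
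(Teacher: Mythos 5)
The paper itself offers no proof of this proposition---it simply refers the reader to Milne's Chapter~34 and to Katz's book~[29]---so there is no argument in the text against which your sketch can be directly compared. That said, your strategy is the standard one (it is essentially the argument in Katz's \emph{Gauss sums, Kloosterman sums and monodromy groups}), and the main moves---identifying $e(t)$ with a Frobenius trace on $R^1\pi_*\mathbb{Q}_l$, reducing (a) to Deligne's theorem for $H^2(X)$ via Leray, and handling (b) and (c) via the decomposition $\mathcal{F}\otimes\mathcal{F}\cong\mathrm{Sym}^2\mathcal{F}\oplus\mathbb{Q}_l(-1)$ and a Goursat argument for the product monodromy---are all sound and are the ones actually used in the literature.

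There is, however, a genuine gap that you yourself have correctly identified, and it deserves to be stated more forcefully: the paper's Main Lemma (Section~5) and its Corollaries~1 and~2 are proved only under the hypothesis of a \emph{skew-symmetric} pairing and monodromy open in $Sp$. Neither $\mathrm{Sym}^2\mathcal{F}$ (rank~3, orthogonal self-duality) nor $\mathcal{G}=\mathcal{F}\otimes\tau_u^*\mathcal{F}$ (rank~4, orthogonal self-duality, with monodromy dense in $SL_2\times SL_2$) satisfies these hypotheses, so the weight bound $|\alpha|\le q^{3/2}$ on the eigenvalues of $F^*$ acting on $H^1_c$ of these sheaves is \emph{not} available from what the paper proves. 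What is actually invoked in parts (b) and (c) is the general statement---Deligne's Weil~II, cited but not developed here as~[14]---that for a lisse $\mathbb{Q}_l$-sheaf pure of weight $w$ on an open curve, $H^1_c$ is mixed of weight $\le w+1$. One can in principle avoid Weil~II by exhibiting $\mathrm{Sym}^2\mathcal{F}$ and $\mathcal{G}$ inside the cohomology of a smooth projective threefold (the fiber product $X\times_{\mathbb{P}^1}X$ and its translated analogue, suitably desingularized) and then applying the paper's DT directly, but that is considerably more work than your sketch suggests and is the route Deligne took for the Ramanujan conjecture; if you keep the argument as written you should state explicitly that it rests on Weil~II. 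Two smaller points: the inference ``infinite closed subgroup of $SL_2$ containing unipotents, hence Zariski-dense'' needs irreducibility of the monodromy representation as an additional input (the Borel subgroup is a counterexample otherwise), and in (b) and (c) you should say something about why the finitely many singular fibers cannot spoil the asymptotic---for a non-minimal or reducible singular fiber one has $e(t)=O(q)$ and hence $e(t)^2=O(q^2)$, which is the size of the main term, so either the sum must be restricted to $U(\mathbb{F}_q)$ or some hypothesis (irreducible singular fibers, relative minimality) must be in force.
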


See [2], Chapter 34 for an overview of (D) and the book of Katz [29] for a more comprehensive study of applications of Etale cohomology to estimations of various sums. 

What makes the correspondence between number-theoretic estimates and the monodromy of sheaves truly fascinating is that it can actually be used in both directions. For example, N.Katz proved the following fact (see [30]) using the estimates of Davenport and Lewis on the number of solutions of polynomials over finite fields:

\begin{fact}
Let $f(X, Y) \in \mathbb{C}[X, Y]$ be a polynomial and suppose that for indeterminates $a, \ b, \ c$ the complete nonsingular model of the affine curve $$f(X, Y)+aX+bY+c=0$$ over $\mathbb{C}(a, b, c)$ has genus $g \geq 1$. Then for any nonempty Zariski open set $S \subset \mathbb{A}_{\mathbb{C}}^3$ over which the complete nonsingular model extents (in a particular way) to a morphism $f: \mathbb{C} \to S$, the fundamental group of $S$ acts absolutely irreducibly on a general stalk of $R^1 f_* \mathbb{Q}$ (higher direct image for the complex topology). 
\end{fact}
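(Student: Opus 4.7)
The plan is a second-moment argument in the spirit of Deligne's equidistribution philosophy, using the Davenport--Lewis estimates as an independent numerical input. First, I would spread the whole setup out: choose a ring $\mathcal{O} = \mathbb{Z}[1/N]$ over which $f$, $S$, and the complete nonsingular model $\mathcal{C} \to S$ all have models, so that for almost all primes $p$ one gets a smooth proper family $f_p : \mathcal{C}_p \to S_p$ of curves of genus $g$ over a Zariski-open $S_p \subset \mathbb{A}^3_{\mathbb{F}_p}$. Fix a prime $\ell \neq p$, and let $V = (R^1 f_{p,*} \mathbb{Q}_\ell)_{\bar s}$ denote a general stalk, a $2g$-dimensional symplectic representation of $\pi_1(S_{p, \bar{\mathbb{F}}_p}, \bar s)$. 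By Schur's lemma, absolute irreducibility of this action is equivalent to
\[
\dim_{\overline{\mathbb{Q}}_\ell}\bigl(V \otimes V^{*}\bigr)^{\pi_1(S_{p, \bar{\mathbb{F}}_p})} = 1.
\]

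On the cohomological side, the Lefschetz trace formula (property (e) of Section 2.4) applied fiberwise yields $\#\mathcal{C}_{p,s}(\mathbb{F}_p) - p - 1 = -\mathrm{Tr}(F_s \mid V_s)$ for each $s \in S_p(\mathbb{F}_p)$, whence the second moment
\[
M_2(p) := \sum_{s \in S_p(\mathbb{F}_p)} \bigl(\#\mathcal{C}_{p,s}(\mathbb{F}_p) - p - 1\bigr)^2 = \sum_{s} \mathrm{Tr}\bigl(F_s \mid V_s \otimes V_s\bigr).
\]
Applying the trace formula once more to $S_p$ with the sheaf $\mathcal{V} \otimes \mathcal{V}$ (where $\mathcal{V} = R^1 f_{p,*} \mathbb{Q}_\ell$) converts this into a sum over the groups $H^i_c(S_{p, \bar{\mathbb{F}}_p}, \mathcal{V} \otimes \mathcal{V})$. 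Deligne's purity theorem (Sections 5 and 8) bounds all but the top cohomology by $O(p^{\dim S + 3/2})$, while the top group equals $(V \otimes V)_{\pi_1}(-\dim S - 1)$; invoking the symplectic self-duality $V \cong V^{*}(1)$ this gives
\[
M_2(p) = p^{\dim S + 2} \cdot \dim\bigl((V \otimes V^{*})^{\pi_1(S_{p, \bar{\mathbb{F}}_p})}\bigr) + O\bigl(p^{\dim S + 3/2}\bigr).
\]

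On the number-theoretic side, the Davenport--Lewis character-sum estimates applied directly to the polynomial $f(X,Y) + aX + bY + c$ over $\mathbb{F}_p$ give an independent evaluation $M_2(p) = p^{\dim S + 2} + O(p^{\dim S + 3/2})$. Here the leading coefficient is exactly $1$ precisely because the genus hypothesis $g \geq 1$ rules out the generic fiber being rational (which would contribute an extra leading term) and the linear dependence of the equation on the three parameters $(a,b,c)$ ensures that the pencil is as generic as possible. Comparing the two expressions as $p \to \infty$ forces $\dim (V \otimes V^{*})^{\pi_1} = 1$, i.e.\ absolute irreducibility of $V$ as a representation of the \'etale geometric $\pi_1$. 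The transfer back to the complex analytic setting is then standard: by the comparison theorem (property (f) of Section 2.4) and the Riemann existence theorem, the image of $\pi_1(S^{\mathrm{an}})$ in $GL(V_{\mathbb{C}})$ is Zariski-dense in the image of the algebraic $\pi_1$, and absolute irreducibility is a Zariski-closed condition on the image.

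The main obstacle is the precise matching of leading constants between the cohomological and the Davenport--Lewis evaluations of $M_2(p)$: one must verify that under the hypotheses (genus $g \geq 1$ and linear parameter dependence) the Davenport--Lewis bound really does pin the leading coefficient to exactly $1$ rather than some larger integer, since any extra $\pi_1$-invariants in $V \otimes V^{*}$ would appear as main-term contributions that the estimates must exclude. A secondary subtlety is verifying that one may choose a prime $p$ (or infinitely many) so that reduction preserves enough of the monodromy structure to make the characteristic-$p$ conclusion lift to the complex case; this typically requires a careful application of the Riemann existence theorem together with the fact that the geometric \'etale $\pi_1$ of $S_{p, \bar{\mathbb{F}}_p}$ is a quotient of the profinite completion of $\pi_1(S^{\mathrm{an}})$ by the specialization map.
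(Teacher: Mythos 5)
The paper does not actually prove this statement: Proposition~13 is quoted as a result of N.~Katz, with a bare citation to [30] and the one-line remark that the proof uses the Davenport--Lewis estimates. There is therefore no in-text argument to compare against, but your reconstruction of a second-moment argument (spread out, reduce mod $p$, compare a cohomological evaluation of $M_2(p)$ via the Lefschetz trace formula and Deligne's theorem with the number-theoretic evaluation from Davenport--Lewis, then descend back to $\mathbb{C}$ by specialization of $\pi_1$) is exactly the shape of argument Katz uses in that line of work, and it is consistent with the hint the paper gives.

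That said, there are real slips in the details. First, the Tate twist on the top compactly supported cohomology is wrong: for $S_p$ smooth connected of dimension $d=3$ and $\mathcal{G}$ lisse, Poincar\'e duality gives
$H^{2d}_c(S_{p,\bar{\mathbb{F}}_p},\mathcal{G}) \cong (\mathcal{G}_{\bar s})_{\pi_1}(-d)$, not $(-d-1)$. Combining with $V^{\vee}\cong V(1)$ this puts the main term at $p^{\,d+1}\cdot\dim$, i.e.\ $p^{4}$, matching the obvious heuristic $|S_p(\mathbb{F}_p)|\cdot p \sim p^{3}\cdot p$; your $p^{\dim S+2}$ is off by one, and your error term should be $O(p^{\dim S+1/2})$ rather than $O(p^{\dim S+3/2})$ for the same reason. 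Second, the step ``by Schur's lemma, absolute irreducibility is equivalent to $\dim(V\otimes V^{*})^{\pi_1}=1$'' is only true in the semisimple case; Schur gives the forward implication but not the converse for a possibly non-semisimple representation (there are reducible indecomposable modules with one-dimensional endomorphism algebra). You can fix this by invoking the semisimplicity of the geometric monodromy on $R^{1}f_{*}\mathbb{Q}_{\ell}$, which the paper itself records in Subsection~9.2 as Deligne's complete-reducibility theorem for local systems satisfying the Riemann hypothesis; alternatively, use the symplectic structure on $V$ to argue directly. Third, identifying the leading coefficient with $\dim(V\otimes V^{*})^{\pi_1}$ implicitly assumes the Frobenius acts by $1$ on the whole of that invariant space, whereas a priori its eigenvalues there are only units of weight~$0$; one must either argue (e.g.\ via a Ces\`aro average over the extensions $\mathbb{F}_{p^{n}}$) that only the identity contributes to the main term, or appeal again to Weil~II purity together with the rational structure on $\mathrm{End}(V)$ to pin those eigenvalues to~$1$. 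None of these are fatal to the strategy, but they are genuine gaps one would have to close.
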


\subsection{Other consequences}

The following few paragraphs follow section III of [25] (consult it for more details).

Let me say a few words about the consequences of Deligne's work on the cohomology and the theory of weights. As foreseen by Grothendieck in the early 1960's with his "\textit{yoga of weights}", the proof of the Weil conjectures for curves over finite fields would have important consequences for the \textit{cohomological structure of varieties over $\mathbb{C}$}. 

The idea here is that any reasonable algebro-geometric structure over $\mathbb{C}$ is actually defined over some subring of $\mathbb{C}$ that is finitely generated over $\mathbb{Z}$ and \textit{reducing modulo a maximal ideal} $m$ of that ring we get to the finite field situation with the Frobenius endomorphism $F(m)$. This Frobenius operates on the $l$-adic cohomology that is precisely the singular cohomology with $\mathbb{Q}_l$ coefficients of the original complex situation. 
This means that we get a previously unsuspected structure on the cohomology of a complex algebraic variety, the so-called "\textit{weight filtration}". 

In the late 1960's and early 1970's Deligne has developed the \textit{mixed Hodge theory} - a complete theory of weight filtration of complex algebraic varieties (see [31] for an introduction). One of the applications of this theory is the \textit{global monodromy} of families of projective smooth varieties (given a smooth projective map $X \to S$ of smooth complex varieties for any $s \in S$ the monodromy representation of $\pi_1(S, s)$ on all the $H^i(X_s, \mathbb{C})$ is \textit{completely reducible}). 

By means of an extremely ingenious argument (see the proof in [14]) drawing upon the theory of $L$-functions and ideas of Hadamard and de la Valee Poussin proof of the \textit{prime number theorem} Deligne established an $l$-adic analog of the \textit{complete reducibility theorem} for $l$-adic local systems in characteristic $p$ over etale open subsets of $\mathbb{P}^1$ provided that all the fibers of the local system satisfy the Riemann Hypothesis. Therefore, Deligne's theorem implies that we can apply this result to the local system coming from a Lefschetz pencil for a projective nonsingular variety over a finite field. The resulting complete reducibility implies the following.

\begin{thrm21}
Let $X$ be a smooth projective variety over an algebraically closed field $k$, $\mathcal{L}$ be an invertible ample sheaf on $X$ and $\eta=c_1(\mathcal{L}) \in H^2(X, \mathbb{Q}_l)$ be the cohomology class of a hyperplane section. We assume that $X$ is purely of dimension $n$. Then for each $i \geq 0$ the cup product by $\eta^i$: $$\eta^i \cup: H^{n-i}(X, \mathbb{Q}_l) \to H^{n+i}(X, \mathbb{Q}_l)$$ is an isomorphism. 
\end{thrm21}

I would like to conclude this paper with a curious idea (which might have already occurred to the reader) that has lead to the development of the so-called \textit{$\mathbb{F}_1$-geometry}. We have noted that the Hasse-Weil zeta function $\zeta_Y(s)$ for $Y=Spec(\mathbb{Z})$ gives the Riemann zeta function. Therefore, if we could see $Spec(\mathbb{Z})$ as a "scheme of finite type over $\mathbb{F}_1$" one could hope to reprove Deligne's theorem for $Spec(\mathbb{Z})$ and verify the Riemann hypothesis. 

Therefore, different approaches to $\mathbb{F}_1$ geometry deem to create an object that has some conjectured useful properties (such as the described above). Unfortunately, nobody has yet formulated how $Spec(\mathbb{Z})$ can be of finite type nor what a "\textit{smooth $\mathbb{F}_1$-variety}" should mean.

As far as I am aware, none of the existing $\mathbb{F}_1$-theories has so far been used to prove any important external results, nor can we call any of the approaches actually well-developed but there are many people who have high hopes that such a theory can be established and used to prove both the Riemann hypothesis and other strong number-theoretic statements such as the ABC-conjecture. I refer an interested reader to [32] for a thorough review of the existing approaches to $\mathbb{F}_1$-geometry. 

\section*{Acknowledgements}
I would like to use the chance to thank my scientific advisor Michael Finkelberg for his support and encouragement, Pavel Solomatin for his useful remarks and Richard Griffon for being the first one to inspire my interest in the theme.


\begin{thebibliography}{999}
\bibitem{} A. Weil, {\it Number of solutions of equations in finite fields}, Bull. AMS 55 (1949), 497-508.

\bibitem{} J. S. Milne, {\it Etale cohomology}, Lecture notes.

\bibitem{} R. Griffon, {\it Curves over finite fields, Fall 2017}, Leiden University lecture notes.

\bibitem{} F. K. Schmidt, {\it Analysche Zahlentheorie in Korpern der Characteristic $p$}, Math. Zeit. 33 (1931), 1-32.


\bibitem{} H. Hasse, {\it ABewels des Analogons der Riemannschen Vermutung fur die Artinschen}, Ges. d. Wiss. Nachrichten. Math-Phys. Klasse, 1933, Heft 3, 253-262.


\bibitem{} N. M. Katz, {\it An overview of Deligne's proof of the Riemann hypothesis for varieties over finite fields}, Proceedings of Symposia in Pure Mathematics, Volume 28, 1976, 275-305.


\bibitem{} P. Deligne, {\it La Conjectures de Weil I}, Mathématiques de l'IHES, Volume 43 (1974), 273-307.


\bibitem{} S. A. Stepanov, {\it The number of points of a hyperelliptic curve over a finite prime field. (Russian)}, Izv. Akad. Nauk SSSR Ser. Mat. 33, 1171-1181, 1969


\bibitem{} E. Bombieri, {\it Counting points on curves over finite fields (d'apres S. A. Stepanov)},  Lecture Notes in Math., Vol. 383, Springer, Berlin, 1974.

\bibitem{} B. Dwork, {\it On the rationality of the zeta function of an algebraic variety},   Amer. J. Math. 82, 631-648, 1960.

\bibitem{} M. Artin, A. Grothendieck and J. L. Verdier, {\it SGA 4 Theorie des topos et cohomologie etale des schemas},  Lecture Notes in Math., Vol. 269, 270, 305, Springer, Berlin-Heidelberg-New York, 1972.

\bibitem{} A. Grothendieck, {\it Standard Conjectures on Algebraic Cycles},   Algebraic Geometry (Internat. Colloq., Tata Inst. Fund. Res., Bombay, 1968), Oxford University Press, 193-199, 1969.

\bibitem{} R. A. Rankin, {\it Contributions to the theory of Ramanujan's function $\tau(n)$ and similar arithmetical functions.}. II, Proc. Camb. Phil. 35 (1939), 351-372.


\bibitem{} P. Deligne, {\it La conjectures de Weil II},   Publications Mathematiques de l'IHES, Volume 52 (1980), 137-252.

\bibitem{} G. Laumon, {\it  Transformation de Fourier, constantes d'equations fonctionnelles et conjecture de Weil},  Publications Mathematiques de l'IHES (65): 131-210, 1987.

\bibitem{} N. M. Katz, {\it  $L$-functions and monodromy: four lectures on Weil II}, Adv. Math., 160 (1): 81-132, 2001.

\bibitem{} K. S. Kidlaya , {\it Fourier transforms and p-adic `Weil II'},   Compositio Mathematica, 142 (6): 1426-1450, 2006.

\bibitem{} J. S. Milne, {\it Etale Cohomology}, Princeton University Press, 1980.

\bibitem{} S. S. Shatz, {\it Profinite groups, Arithmetic and Geometry}, Princeton, II.4, 1972.

\bibitem{} R. A. Rankin, {\it Contributions to the theory of Ramanujan's function $\tau(n)$ and similar arithmetical functions}. II, Proc. Camb. Phil. Soc., 35 (1939), 351-372.

\bibitem{} J-P. Serre, {\it Lie Algebras and Lie Groups}, Springer-Verlag, 1964.

\bibitem{} H. Weil, {\it The Classical Groups}, Princeton University Press, 1939.

\bibitem{} W. Fulton, J. Harris, {\it Representation theory}, Springer-Verlag, 1991.

\bibitem{} S. Lefschetz, {\it L'analysis situs et la geometrie algebrique} (Gauthier-Villars), 1924. Reproduced in: {\it Selected papers} (Chelsea Publ. Co.).

\bibitem{} N. M. Katz, {\it The Work of Pierre Deligne}, Proceedings of the International Congress of Mathematicians, Helsinki, 1978. 

\bibitem{} S. R. Ghorpade, G. Lachaud, {\it Etale cohomology, Lefschetz theorems and the number of points of singular varieties over finite fields}, Mosc. Math. J. 2, 589-631, 2002.

\bibitem{} P. Deligne, {\it Formes modulaires et representations $l$-adiques}, Lecture notes in Mathematics, 179, February 1969.

\bibitem{} J-P. Serre, {\it Abelian $l$-adic representations and elliptic curves}, McGill University lecture notes, 1968.

\bibitem{} N. Katz, {\it Gauss sums, Kloosterman sums and monodromy groups}, Annals of Mathematics Studies, Princeton University Press, 1988.

\bibitem{} N. Katz, {\it Monodromy of families of curves: applications of some results of Davenport-Lewis}, Seminar on Number theory, Paris, 171-195, 1979-80.

\bibitem{} F. Elzein, L. D. Trang, {\it Mixed Hodge Structures}, arXiv, 23 Feb 2013.

\bibitem{} J. L. Pena, O. Lorschend, {\it Mapping $\mathbb{F}_1$-land:an overview of geometries over the field with one element}, arXiv, 1 Sep 2009.

\bibitem{} E.Goncharov, {\it Weil Conjectures I (translation of La Conjecture de Weil I by Pierre Deligne)}, arXiv,  27 Jul 2018.



\end{thebibliography}
\end{document}